\numberwithin{equation}{section}
\numberwithin{figure}{section}
\newtheorem{thm}{Theorem}
\newtheorem{thrm}{Theorem}[section]
\newtheorem*{theo}{Theorem}
\newtheorem{prop}[thrm]{Proposition}
\newtheorem{cor}[thrm]{Corollary}
\theoremstyle{definition}
 \newtheorem*{dfn}{Definition}
 \newtheorem{df}[thrm]{Definition}
 \newtheorem*{exs}{Examples}
\theoremstyle{remark}
 \newtheorem*{rmk}{Remark}
\renewcommand{\phi}{\varphi}
\renewcommand{\epsilon}{\varepsilon}
\renewcommand{\setminus}{\smallsetminus}
\newcommand{\rgt}[1]{\right#1}
\newcommand{\lft}[1]{\left#1}
\newcommand{\rd}{\partial}
\newcommand{\Z}{\mathbb{Z}}
\newcommand{\sect}[1]{\operatorname{Sec}\left(#1\right)}
\newcommand{\secc}[1]{\operatorname{Sec}#1}
\newcommand{\D}{\mathcal{D}}
\newcommand{\DD}{\underbar{$\D$}}
\newcommand{\ub}[1]{\underbar{$#1$}}
\newcommand{\rjet}[1]{{#1}^{(r)}}
\newcommand{\ojet}[1]{{#1}^{(1)}}
\newcommand{\vertt}{\operatorname{Vert}}
\newcommand{\symb}[1]{\operatorname{Symb}#1}
\newcommand{\dsec}[1]{\operatorname{Sec}_{\mathcal{F}}(#1)}
\newcommand{\rk}{\operatorname{rank}}
\newcommand{\almtf}[1][M]{\Omega^{\mathsf{alm}}_{(3,5)}(#1)}
\newcommand{\almcrtn}[1][M]{\Omega^{\mathsf{alm}}_{(2,3,5)}(#1)}
\newcommand{\zed}[3]{z^{#1}_{{#2}{#3}}}
\newcommand{\ba}[4]{B^{#1}_{#2}A_{{#3}{#4}}}
\newcommand{\aaa}[2]{{A_{#1 #2}}}
\newcommand{\cc}[2]{{C^{#1}_{#2}}}
\newcommand{\ca}[4]{\cc{#1}{#2}\aaa{#3}{#4}}
\newcommand{\bs}[1]{\boldsymbol{#1}}
\newcommand{\cv}[1]{\operatorname{Conv}\lft(#1\rgt)}
\newcommand{\spn}[1]{\operatorname{Span}\{#1\}}
\begin{document} 
\title[Cartan~$(2,3,5)$-distribution]
{Existence and classification \\ of the Cartan~$(2,3,5)$-distribution}
\author{Jiro ADACHI}
\thanks{This work was supported by JSPS KAKENHI Grant Number JP17K05236, 
  JP22K03305}
\address{Department of Mathematics, Hokkaido University, 
  Sapporo, 060--0810, Japan}
\email{j-adachi@math.sci.hokudai.ac.jp}
\subjclass[2020]{53C15, 58A30, 57R45}
\keywords{the Cartan distributions, Gromov's convex integration method.}


\begin{abstract}
  The Cartan $(2,3,5)$-distribution is a tangent distribution of rank~$2$ 
on a $5$-dimensional manifold satisfying certain generic conditions. 
 The necessary and sufficient condition for a manifold 
to admit such a structure is established in this paper. 
 The condition obtained is purely topological. 
 In addition, the classification of such structures, 
up to homotopy as formal Cartan distributions, is obtained. 
\end{abstract}

\maketitle

\section{Introduction}\label{sec:intro}
  The relation between topology of manifolds and 
geometric structures defined on them 
is an interesting and important problem in differential topology. 
 In addition, once a certain kind of structure is known to exist there, 
classification of such structures becomes a natural problem. 
 In this paper, we study such problems for the Cartan $(2,3,5)$-distribution. 

  A \emph{Cartan $(2,3,5)$-distribution}\/ 
is a certain tangent distribution $\D\subset TM$ of rank~$2$ 
on a $5$-dimensional manifold $M$. 
 It is defined as that satisfies the following conditions: 
\begin{equation*}
  \rk~[\D,\D]=3, \qquad \rk[\D,[\D,\D]]=5, 
\end{equation*}
where $[\D,\D]$ denotes the tangent distribution 
induced by the Lie bracket of vector fields 
(see Sections~\ref{sec:distr}, and~\ref{sec:Cdistr} for precise definitions). 
 Being introduced by Cartan in the famous ``$5$-variables paper''~\cite{Car5v}, 
it has been studied for a long time 
(see \cite{bcg3}, \cite{montgomery}, for example). 
 While it is related to control theory and various applied fields, 
it has always been a central subject of interest 
in the geometric study of differential equations. 

  One of the main goals of this paper is 
to give the necessary and sufficient condition 
for a manifold to admit a Cartan $(2,3,5)$-distribution. 
 We emphasize that the condition imposed on the manifold is purely topological. 
 In order to formulate the condition, we introduce some notions. 
 From the viewpoint of the \textit{h}-principle, 
we introduce a formal structure for the genuine Cartan~$(2,3,5)$-distribution. 
 An \emph{almost Cartan structure}\/ on a $5$-dimensional manifold is defined 
as a sequence of tangent distributions $\D\subset\mathcal{E}\subset TM$ 
of rank~$2$ and $3$ respectively 
together with a certain triple $(\omega_1,\ \omega_2,\ \omega_3)$ of $2$-forms 
(see Section~\ref{sec:almCrtn} for the precise definition). 
 The relation between the Cartan $(2,3,5)$-distribution 
and almost Cartan structure is analogous to that 
between contact structure and almost contact structure. 
 Then the necessary and sufficient condition is stated as follows. 
%
%
\begin{thm}\label{thma}
  Let $M$ be a possibly closed $5$-dimensional manifold. 
 Then $M$ admits a Cartan $(2,3,5)$-distribution 
if and only if it admits an almost Cartan structure. 
\end{thm}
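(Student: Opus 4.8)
The plan is to prove the two implications separately, the ``only if'' direction being essentially tautological and the ``if'' direction being carried out by the \textit{h}-principle via Gromov's convex integration.

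For the ``only if'' direction, suppose $M$ carries a genuine Cartan $(2,3,5)$-distribution $\D$. Then $\mathcal{E}:=[\D,\D]$ is a genuine rank-$3$ distribution containing $\D$, and the Lie bracket descends to bundle maps $\D\wedge\D\to\mathcal{E}/\D$ and $\D\wedge(\mathcal{E}/\D)\to TM/\mathcal{E}$, whose nondegeneracy is exactly the two rank conditions. Extracting from these bracket maps the triple $(\omega_1,\omega_2,\omega_3)$ of $2$-forms prescribed in the definition of an almost Cartan structure (Section~\ref{sec:almCrtn}), and simply forgetting the holonomy content of the bracket, yields an almost Cartan structure underlying $\D$. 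Thus every genuine structure is, in particular, a formal one.

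For the ``if'' direction, I would realize a Cartan $(2,3,5)$-distribution as a holonomic solution of an open differential relation and recover the almost Cartan structure as a formal (non-holonomic) solution. Concretely, encode a rank-$2$ distribution locally by a frame $\D=\spn{X,Y}$ and regard the pair of conditions
\[
  \rk[\D,\D]=3,\qquad \rk[\D,[\D,\D]]=5
\]
as constraints on the $2$-jet of $(X,Y)$: the first asks that $[X,Y]$ be independent of $X,Y$ modulo $\D$, and the second that the iterated brackets $[X,[X,Y]]$ and $[Y,[X,Y]]$ complete a frame of $TM$. These are open, $\operatorname{Diff}$-invariant conditions, and the induced differential relation $\mathcal{R}$ turns out to be ample in the coordinate directions relevant to the bracket; these are exactly the hypotheses under which convex integration produces a holonomic solution from a formal one. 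An almost Cartan structure provides precisely a formal solution of $\mathcal{R}$: the bundle data $\D\subset\mathcal{E}\subset TM$ together with the forms $(\omega_1,\omega_2,\omega_3)$ prescribe the values the first and iterated brackets \emph{should} take, without these being realized by actual derivatives. I would then deform this formal solution to a genuine one, organizing the convex integration in two stages matching the two bracket conditions: first add to $X,Y$ rapidly oscillating corrections tangent to $\mathcal{E}$ to make $[X,Y]$ acquire the prescribed transverse component, achieving $\rk[\D,\D]=3$; then, with $\mathcal{E}$ now genuine, run a further convex integration on the iterated bracket to force $\rk[\D,[\D,\D]]=5$, using openness of the rank-$5$ condition and $C^0$-smallness to preserve the rank-$3$ condition already gained. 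A parametric version of the same argument yields the homotopy classification stated elsewhere in the paper.

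The main obstacle I anticipate is the genuinely second-order nature of the rank-$5$ condition: convex integration is tailored to first-order relations, so the iterated bracket $[\D,[\D,\D]]$ must be repackaged, either by introducing the intermediate distribution $\mathcal{E}$ as an auxiliary unknown and reducing to two coupled first-order relations, or by passing to a one-jet reformulation in which $\mathcal{E}=[\D,\D]$ is treated as independent data subsequently matched to the true bracket. Verifying ampleness after this reduction, and checking that the second stage of convex integration can be performed $C^0$-closely enough not to disturb the open rank-$3$ condition secured in the first stage, is where the technical weight of the proof lies.
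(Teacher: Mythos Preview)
Your proposal correctly identifies convex integration as the tool and correctly isolates the central difficulty, but it does not resolve that difficulty, and the paper's actual proof hinges precisely on the reformulation you are missing.

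The gap is this: you encode $\D$ by a local frame $(X,Y)$ of vector fields, so that the Cartan conditions involve $[X,Y]$, $[X,[X,Y]]$, $[Y,[X,Y]]$ and are genuinely second-order in $(X,Y)$. You acknowledge this and propose either a two-stage integration or an auxiliary-$\mathcal{E}$ reduction, but neither is carried out, and the second stage of your two-stage plan still faces a second-order relation. The paper's decisive move is to \emph{dualize}: describe $\D$ by a local annihilating coframe $(\alpha_1,\alpha_2,\alpha_3)$ and use the characterization of Proposition~\ref{prop:Cartanforms}. In those variables both bracket conditions become \emph{first-order} conditions on the $\alpha_i$: the rank-$3$ step is $\alpha_1\wedge\alpha_2\wedge\alpha_3\wedge d\alpha_3\neq 0$ together with $\alpha_1\wedge\alpha_2\wedge\alpha_3\wedge d\alpha_i=0$ for $i=1,2$, and the rank-$5$ step is the pointwise independence of $\alpha_1\wedge\alpha_2\wedge d\alpha_1$ and $\alpha_1\wedge\alpha_2\wedge d\alpha_2$. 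The apparently second-order condition $[\D,[\D,\D]]=TM$ is thus absorbed into a first-order condition on a larger set of unknowns, without introducing $\mathcal{E}$ as a separate variable to be matched afterward.

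Concretely, the paper sets $X=\bigoplus^3 T^\ast M$, $Z=\bigoplus^3(T^\ast M\oplus\bigwedge^2 T^\ast M)$, and the operator $\mathcal{F}(\alpha_1,\alpha_2,\alpha_3)=(\alpha_1,\alpha_2,\alpha_3,d\alpha_1,d\alpha_2,d\alpha_3)$, then defines a stratified singular locus $S=S_1\cup S_2\subset Z$ encoding the failure of the two conditions above and pulls back to $\Sigma=F^{-1}(S)\subset X^{(1)}$. The heart of the argument is Proposition~\ref{prop:core}, the ampleness of $\mathcal{R}=X^{(1)}\setminus\Sigma$, proved by explicit computation on principal subspaces: $\Sigma_1\cap P$ is cut out by a linear system of rank $0$ or $\geq 2$ (hence thin), and $\Sigma_2\cap P$ reduces to the $(3,5)$ ampleness already established in Proposition~\ref{prop:essence_(3,5)} via a detailed analysis of the quadric $\{F_{23}=0\}$. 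One then applies Theorem~\ref{thm:HP_apl_dr} and Theorem~\ref{thm:HP_D-secs} to conclude. Your outline never reaches a point where ampleness can even be formulated, because the relation you write down lives in a $2$-jet space.
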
 
\noindent
 We should remark that the existence condition for an almost Cartan structure 
is purely topological. 
 Hence the existence problem for Cartan $(2,3,5)$-distributions 
reduces to a purely topological condition. 
 We prove this theorem from the viewpoint of the \textit{h}-principle. 
 Therefore if there exists an almost Cartan structure 
$(\D\subset\mathcal{E},\{\omega_1,\omega_2;\omega_3\})$, 
we have a genuine Cartan $(2,3,5)$-distribution $\tilde{\D}$ homotopic to $\D$. 
 The method applied to show the \textit{h}-principle in this paper 
is Gromov's convex integration method. 

  In some specific cases, similar results were obtained 
by Dave and Haller~\cite{davehaller}. 
 They dealt with the case where $5$-dimensional manifolds 
are open and orientable,
and the tangent distributions are also orientable. 
 Since they applied Gromov's \textit{h}-principle for open manifolds, 
then such assumptions were required. 
 In that setting, they obtained the same result 
as Theorem~\ref{thma}, 
and clearly describe the condition 
from the viewpoint of obstruction theory, 
using known topological invariants. 
 Actually, their ``formal'' structure for the Cartan $(2,3,5)$-distribution 
is equivalent to that in this paper, almost Cartan structure 
(see Section~\ref{sec:almCrtn}). 
 Furthermore, they studied necessary and sufficient condition 
for tangent distributions of rank~$2$ to be Cartan $(2,3,5)$-distributions
on open orientable manifolds that satisfy the conditions above. 
 The claims are described by the following topological terms. 
 A manifold $M$ is said to be \emph{spin}\/ if it admits a spin structure. 
 In other words, $M$ satisfies $w_1(M)=0,\ w_2(M)=0$, where $w_i(M)$ denotes 
Stiefel-Whitney class of $TM$. 
 Let $\frac{1}{2}p_1(M)\in H^4(M;\mathbb{Z})$ 
denote the first fractional Pontryagin class of $TM$. 

%
%
\begin{theo}[Dave-Haller]
\textup{(1)}\ Let $M$ be an open $5$-dimensional manifold. 
 The manifold $M$ admits an orientable Cartan $(2,3,5)$-distribution 
if and only if $M$ is spin. 

\noindent
\textup{(2)}\ Let $M$ be an open $5$-dimensional spin manifold 
and $\D$ an orientable tangent distribution of rank~$2$ on $M$. 
 Then there exists an orientable Cartan $(2,3,5)$-distribution $\tilde{\D}$ 
on $M$ with $e(\tilde{\D})=e(\D)$, 
if and only if $e(\D)^2=\frac{1}{2}p_1(M)$, 
where $e(\D)\in H^2(M;\mathbb{Z})$ denotes the Euler class. 
\end{theo}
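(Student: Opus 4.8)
The plan is to deduce the statement from Theorem~\ref{thma}, which reduces the existence of a genuine Cartan~$(2,3,5)$-distribution to that of an almost Cartan structure, and then to analyse the latter purely topologically. Since $M$ is open, it is homotopy equivalent to a CW complex $K$ of dimension at most $4$, and on an open manifold Gromov's \textit{h}-principle already gives the equivalence between genuine distributions and formal (almost Cartan) data, so it suffices to decide when $TM$ carries the appropriate reduction of structure group. The conceptual key is to unwind the definition of an orientable almost Cartan structure (Section~\ref{sec:almCrtn}): the flag $\D\subset\mathcal{E}\subset TM$ together with the non-degeneracy encoded by $(\omega_1,\omega_2;\omega_3)$ is exactly a reduction of the frame bundle to the grading group $\mathrm{GL}(2,\R)$ of the $(2,3,5)$-symbol algebra, acting on the associated graded $\mathfrak{g}_{-1}\oplus\mathfrak{g}_{-2}\oplus\mathfrak{g}_{-3}=V\oplus\Lambda^2V\oplus(V\otimes\Lambda^2V)$ with $V=\R^2$ the standard representation. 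Passing to a maximal compact subgroup and using orientability, this reduction is homotopically a reduction to $\mathrm{SO}(2)$ acting through $\mathrm{std}\oplus\mathrm{triv}\oplus\mathrm{std}$. Thus an orientable almost Cartan structure on $M$ is the same, up to homotopy, as a complex line bundle $L=\D$ together with a bundle isomorphism
\begin{equation*}
  TM\;\cong\;L\oplus\underline{\R}\oplus L ,
\end{equation*}
where $\underline{\R}$ is trivial and $c_1(L)=e(\D)\in H^2(M;\Z)$.

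Granting this description, the necessity of the stated conditions is a characteristic-class computation. Writing $x=c_1(L)=e(\D)$ and $\bar x=x\bmod 2$, the total Stiefel--Whitney class is $w(TM)=w(L)^2=(1+\bar x)^2=1+\bar x^2$, so that $w_1(M)=w_2(M)=0$ and $M$ is spin; this is the ``only if'' direction of~(1). Likewise $p_1(TM)=2\,p_1(L)=2\,c_1(L)^2=2x^2$, whence the fractional Pontryagin class satisfies $\tfrac12 p_1(M)=x^2=e(\D)^2$; this is the ``only if'' direction of~(2). Both come directly from the fact that the middle summand is trivial and the two outer summands are copies of $L$.

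For the sufficiency in~(2), fix an orientable $\D$ with $c_1(\D)=x$ and compare the oriented rank-$5$ bundles $TM$ and $E'=L\oplus\underline{\R}\oplus L$ over the spine $K$ by obstruction theory, the obstructions to an isomorphism lying in $H^{i+1}(K;\pi_i\mathrm{SO}(5))$. Since $\pi_1\mathrm{SO}(5)=\Z/2$, $\pi_2\mathrm{SO}(5)=0$ and $\dim K\le 4$, the only possibly non-zero obstructions occur for $i=1$ and $i=3$. The first is $w_2(M)\in H^2(K;\Z/2)$, which vanishes because $M$ is spin. The second lies in $H^4(K;\pi_3\mathrm{SO}(5))=H^4(K;\Z)$; since $p_1$ of the generator of $\pi_3\mathrm{SO}(5)=\Z$ equals $\pm2$, this top obstruction is identified with $\tfrac12\bigl(p_1(M)-p_1(E')\bigr)=\tfrac12 p_1(M)-e(\D)^2$, an integral class because $M$ is spin. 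It vanishes precisely when $e(\D)^2=\tfrac12 p_1(M)$, in which case $TM\cong E'$, the formal structure exists, and Theorem~\ref{thma} produces a genuine orientable Cartan $(2,3,5)$-distribution $\tilde{\D}$ with $e(\tilde{\D})=e(\D)$.

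Finally,~(1) follows once we exhibit \emph{some} admissible $L$, since the isomorphism $TM\cong L\oplus\underline{\R}\oplus L$ itself provides the required flag $\D=L\subset\mathcal{E}=L\oplus\underline{\R}\subset TM$. By the computation above, producing such an $L$ amounts to solving $x^2=\tfrac12 p_1(M)$ for $x=c_1(L)\in H^2(K;\Z)$. The reduction forces $M$ to be spin, and conversely, for spin $M$ the $i=1$ obstruction vanishes, so the residual content is the representability of $\tfrac12 p_1(M)$ as a cup-square $x\cup x$ — and this is the step I expect to be the main obstacle. For open spin $5$-manifolds it does hold: in the model case $M=N^4\times\R$ it reduces to representing $\tfrac12 p_1(N)=\tfrac32\sigma(N)$ by the even intersection form of the closed spin $4$-manifold $N$, which is possible because, by Donaldson's theorem, such a form is either trivial or indefinite and hence universal on even values. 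Establishing this representability in general — equivalently, that the quadratic cup-square map $H^2(K;\Z)\to H^4(K;\Z)$ attains $\tfrac12 p_1(M)$ — is where the genuine topological work of part~(1) resides, and is what Dave--Haller carry out through an explicit obstruction-theoretic analysis.
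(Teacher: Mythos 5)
You should first be aware that the paper contains no proof of this statement: it is imported verbatim from Dave--Haller \cite{davehaller}, and the only related material the paper supplies is Proposition~\ref{prop:ori-almctn} (an orientable almost Cartan structure is equivalent to a splitting $TM\cong\xi\oplus\epsilon^1\oplus\xi$) together with the quoted Theorem~\ref{thm:dh}, which again states the splitting criteria without proof. Your reduction --- an \textit{h}-principle to pass from genuine to formal structures, the identification of an orientable formal structure with a splitting $TM\cong L\oplus\underline{\R}\oplus L$, and the characteristic-class computations $w(TM)=1+\bar x^{2}$ and $\tfrac12 p_1(M)=e(\D)^2$ --- is exactly the route the paper itself takes for Theorem~\ref{thmb}, and your obstruction-theoretic argument for the sufficiency in part~(2) (spin rank-$5$ bundles over a $4$-complex are classified by $\tfrac12 p_1$, and rank $5$ exceeds the dimension of the spine, so stable isomorphism gives isomorphism) is a reasonable and essentially correct substitute for the citation of Theorem~\ref{thm:dh}(I)(2). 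So the necessity directions of both parts and the sufficiency of part~(2) are in acceptable shape.

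The genuine gap is the sufficiency direction of part~(1). After your reduction, what remains is precisely the claim that for \emph{every} open spin $5$-manifold the class $\tfrac12 p_1(M)\in H^4(M;\Z)$ lies in the image of the cup-square map $H^2(M;\Z)\to H^4(M;\Z)$. You identify this as the crux, but you verify it only in the very special case $M=N^4\times\R$ with $N$ closed spin (via the signature theorem, Rokhlin's theorem, and Donaldson's diagonalizability theorem), and then explicitly defer the general case back to Dave--Haller. A general open $5$-manifold is homotopy equivalent to a $4$-complex that need not be the spine of any product, and nothing in your argument controls the quadratic map $x\mapsto x\cup x$ on $H^2$ of such a complex; establishing that $\tfrac12 p_1(M)$ is always attained is exactly the nontrivial topological content of part~(1). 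The appeal to Donaldson's theorem is also a sign that the special-case argument is not the right mechanism: it uses smoothness and closedness of $N$ in an essential way and has no analogue for a general spine. As written, therefore, the proposal proves part~(2) but only restates, rather than proves, the existence half of part~(1).
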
 
\noindent
 In other words, the second statement characterizes the possible Euler classes 
that can be realized by the Cartan $(2,3,5)$-distributions 
on open $5$-dimensional spin manifolds. 

  Even in the case where manifolds are closed, 
Dave and Haller~\cite{davehaller} 
also studied topological requirements 
for the almost generic rank-two distributions in dimension~$5$, 
that they regarded as a formal structure in their sense 
(see Theorem~\ref{thm:dh}). 
 However, Gromov's \textit{h}-principle for open manifolds does not apply 
in this case. 
 Combining their results with those obtained in this paper, 
we obtain the following theorem. 
%
%
\begin{thm}\label{thmb}
\textup{(1)}\ Let $M$ be a closed connected $5$-dimensional manifold. 
 The manifold $M$ admits an orientable Cartan $(2,3,5)$-distribution 
if and only if $M$ is spin 
with vanishing Kervaire semicharacteristic $\kappa(M)=0\in\Z/2\Z$. \\
\textup{(2)}\ Let $M$ be a closed connected $5$-dimensional spin manifold 
with vanishing Kervaire semicharacteristic $\kappa(M)=0$, 
and $\D$ an orientable tangent distribution of rank~$2$ on $M$. 
 Then there exists an orientable Cartan $(2,3,5)$-distribution $\tilde{\D}$ 
on $M$ with $e(\tilde{\D})=e(\D)$, 
if and only if $e(\D)^2=\frac{1}{2}p_1(M)$, 
where $e(\D)\in H^2(M;\mathbb{Z})$ is the Euler class. 
\end{thm}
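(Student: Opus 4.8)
The plan is to combine the $h$-principle of Theorem~\ref{thma} with the closed-manifold obstruction theory of Dave--Haller (Theorem~\ref{thm:dh}). By Theorem~\ref{thma}, the manifold $M$ carries an orientable Cartan $(2,3,5)$-distribution precisely when it carries an orientable almost Cartan structure, and the homotopy furnished by convex integration preserves the isomorphism class of the rank-$2$ distribution, hence its Euler class. The whole problem therefore reduces to deciding when an orientable almost Cartan structure $(\D\subset\mathcal{E},\{\omega_1,\omega_2;\omega_3\})$ exists on $M$, which is a purely topological, obstruction-theoretic question.

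First I would read off the characteristic-class constraints from the symbol of the structure. The graded nilpotent symbol algebra of the $(2,3,5)$-growth supplies the bundle isomorphisms $\mathcal{E}/\D\cong\Lambda^2\D$ and $TM/\mathcal{E}\cong\D\otimes\Lambda^2\D$; since $\D$ is orientable the line bundle $\Lambda^2\D$ is trivial, so that
\begin{equation*}
  TM\cong \D\oplus\R\oplus\D .
\end{equation*}
Computing Stiefel--Whitney classes via $w(TM)=(1+w_2(\D))^2=1+w_2(\D)^2$ mod~$2$ gives $w_1(M)=0$ and $w_2(M)=0$, so $M$ is spin. Computing Pontryagin classes gives $p_1(M)=2\,p_1(\D)=2\,e(\D)^2$, using $p_1(\D)=e(\D)^2$ for an oriented rank-$2$ bundle; hence $\tfrac12 p_1(M)=e(\D)^2$. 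This establishes the necessity of the spin condition and of the Euler-class relation in part~(2).

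The remaining, genuinely delicate, point is the Kervaire semicharacteristic. Once the characteristic-class conditions above are met, the existence of the flag $\D\subset\mathcal{E}\subset TM$ realizing the splitting $TM\cong 2\D\oplus\R$ is governed by a single top-dimensional secondary obstruction in $H^5(M;\Z/2)$. Here I would invoke Theorem~\ref{thm:dh}: Dave and Haller identify this obstruction with the Kervaire semicharacteristic $\kappa(M)\in\Z/2$, and show that an almost generic rank-$2$ distribution (equivalently, an almost Cartan structure) exists on the closed spin $5$-manifold $M$ if and only if $\kappa(M)=0$, and, with a prescribed orientable $\D$, if and only if in addition $e(\D)^2=\tfrac12 p_1(M)$. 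Combining this with the reduction above yields both statements: for~(1), an orientable Cartan distribution exists iff $M$ is spin with $\kappa(M)=0$; for~(2), given $\D$ with $e(\D)^2=\tfrac12 p_1(M)$ there is an orientable almost Cartan structure with rank-$2$ part $\D$, and Theorem~\ref{thma} upgrades it to a genuine Cartan distribution $\tilde\D$ homotopic to $\D$, so $e(\tilde\D)=e(\D)$.

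The main obstacle is precisely this semicharacteristic step. In the open case no secondary obstruction survives, since Gromov's $h$-principle for open manifolds collapses the problem onto the primary characteristic classes; on a closed manifold the top cell reintroduces a $\Z/2$-obstruction invisible to Stiefel--Whitney and Pontryagin data, and its identification with $\kappa(M)$ is the crux. That identification is carried out in Theorem~\ref{thm:dh}, so the role of the present argument is to bridge the formal/genuine dichotomy via Theorem~\ref{thma} and to verify that the symbol algebra forces exactly the stated characteristic-class conditions.
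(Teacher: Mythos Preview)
Your proposal is correct and follows essentially the same route as the paper: reduce to the almost (formal) structure via Theorem~\ref{thma}, translate the existence of an orientable almost Cartan structure into the splitting $TM\cong\D\oplus\epsilon^1\oplus\D$, and then invoke the Dave--Haller obstruction analysis (Theorem~\ref{thm:dh}) for the characteristic-class and Kervaire conditions. The only organizational difference is that the paper isolates the step ``almost Cartan structure $\Leftrightarrow$ splitting $TM\cong\xi\oplus\epsilon^1\oplus\xi$'' as a separate proposition (Proposition~\ref{prop:ori-almctn}) and then quotes Theorem~\ref{thm:dh} as a black box, whereas you derive the splitting from the symbol isomorphisms $\mathcal{E}/\D\cong\Lambda^2\D$, $TM/\mathcal{E}\cong\D\otimes\Lambda^2\D$ and re-prove by hand the necessity of the spin and Euler-class conditions---computations that are already subsumed in Dave--Haller's statement.
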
 
\noindent
 In other words, the second statement characterizes the possible Euler classes 
that can be realized by the Cartan $(2,3,5)$-distributions 
on closed connected spin $5$-manifolds 
with vanishing Kervaire semicharacteristic. 

%
%
\begin{exs} 
  From Theorem~\ref{thmb}, we obtain some concrete examples of closed manifolds 
that either admit or do not admit Cartan $(2,3,5)$-distributions. 
  It is well known that $S^3\times S^2$ admits a Cartan $(2,3,5)$-distribution, 
the standard model (see \cite{bcg3}, \cite{montgomery}, and so on). 
 The $5$-dimensional manifold $S^3\times S^2$ is in fact spin, 
and satisfies $\kappa(S^3\times S^2)=0$. 
 On the other hand, the $5$-dimensional sphere $S^5$ 
and the manifold $\mathbb{C}P^2\times S^1$ 
do not admit the Cartan $(2,3,5)$-distribution. 
 $S^5$ is spin but its Kervaire semicharacteristic is nonzero. 
 $\mathbb{C}P^2\times S^1$ 
has vanishing Kervaire semicharacteristic but is not spin. 
 Thus neither of them admits a Cartan $(2,3,5)$-distribution. 
\end{exs}

  Another main goal of this paper is a classification 
of Cartan $(2,3,5)$-distributions. 
 In this paper we apply Gromov's convex integration method 
to show the \textit{h}-principle. 
 As a consequence, 
we obtain the one-parametric version of the \textit{h}-principle. 
 In other words, we obtain the following classification result. 
%
%
\begin{thm}\label{thmc}
  Let $\mathcal{D}_1,\ \mathcal{D}_2\subset TM$ 
be Cartan $(2,3,5)$-distributions on a $5$-dimensional manifold $M$. 
 If they are homotopic through almost Cartan structures, 
then they are homotopic through the genuine Cartan $(2,3,5)$-distributions. 
\end{thm}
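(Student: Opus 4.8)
The plan is to recognize Theorem~\ref{thmc} not as a separate argument but as the injectivity on path components contained in the full parametric \textit{h}-principle that convex integration already delivers. The proof of Theorem~\ref{thma} proceeds by exhibiting the differential relation whose genuine solutions are Cartan $(2,3,5)$-distributions as ample in the coordinate directions, and Gromov's convex integration for such relations does more than produce one genuine solution from one formal solution: it yields a weak homotopy equivalence between the space of genuine solutions and the space of formal solutions, i.e.\ almost Cartan structures. Surjectivity on $\pi_0$ of this equivalence is precisely Theorem~\ref{thma}, while injectivity on $\pi_0$—two genuine distributions joined by a path of formal solutions are joined by a path of genuine ones—is exactly the assertion of Theorem~\ref{thmc}. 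So the strategy is to extract and apply the parametric, relative form of the \textit{h}-principle underlying Theorem~\ref{thma}.

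Concretely, I would run convex integration with the one-dimensional parameter space $P=[0,1]$ relative to its boundary $\partial P=\{0,1\}$. A homotopy through almost Cartan structures between $\D_1$ and $\D_2$ is a continuous family $\{F_t\}_{t\in[0,1]}$ of formal solutions whose endpoints $F_0,F_1$ are holonomic, namely the formal data underlying the genuine distributions $\D_1,\D_2$. Gromov's convex integration for ample relations is both parametric and relative: a compact family of formal solutions that is already holonomic over a closed subset of the parameter domain can be deformed, through formal solutions and rel that subset, into a family of genuine solutions, with $C^0$-control on the underlying geometric objects. Applying this to $\{F_t\}$ with holonomic subset $\{0,1\}$ produces a family $\{\tilde{\D}_t\}$ of genuine Cartan $(2,3,5)$-distributions with $\tilde{\D}_0=\D_1$ and $\tilde{\D}_1=\D_2$, which is the required homotopy through genuine structures.

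The hard part is not this formal deduction but verifying that the proof of Theorem~\ref{thma} actually goes through with a parameter, continuously in $t$ and rel the endpoints. This means re-reading that proof and checking that each application of convex integration—and, if the construction proceeds in stages attached to the successive rank conditions $\rk[\D,\D]=3$ and $\rk[\D,[\D,\D]]=5$, each such stage—uses ampleness in a form that survives in families and that composes parametrically. For a genuinely ample relation this holds in principle, since the relevant convex hulls vary continuously with the formal datum and admit sections that can be prescribed to coincide with a given holonomic solution over the relative subcomplex; the actual labour is to confirm that the specific reductions and normalisations made in proving Theorem~\ref{thma} are compatible with this parametric, relative framework. A secondary but necessary point is to ensure that the $C^0$-closeness furnished by convex integration keeps the entire family inside the open genericity conditions defining a Cartan distribution, so that $\{\tilde{\D}_t\}$ consists of genuine Cartan $(2,3,5)$-distributions at every time $t$ and not merely at isolated values.
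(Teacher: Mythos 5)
Your proposal is correct and follows essentially the same route as the paper: Theorem~\ref{thmc} is deduced from the one-parametric (relative, over $P=[0,1]$ rel $\partial P$) form of the \textit{h}-principle, which the paper obtains for free from the ampleness of the relevant open differential relation via Theorem~\ref{thm:HP_apl_dr}, transferred to exact data by the \textit{h}-principle for first order linear differential operators (Theorem~\ref{thm:HP_D-secs}). The verifications you flag as the ``hard part'' are exactly what the paper packages into Proposition~\ref{prop:core} and the cited general theorems, so no additional work beyond what you describe is needed.
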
 

  Concerning the methods to show the existence of geometric structures, 
there are several important works. 
 The existence of a contact structure on a closed orientable $3$-manifold 
is proved by Martinet~\cite{martinet71} by a constructive method. 
 On the other hands, methods from the viewpoints of the \textit{h}-principles 
are applied to some geometric structures. 
 For higher-dimensional manifolds, 
it is proved by Borman, Eliashberg, and Murphy~\cite{boelmu} 
from the viewpoint of the \textit{h}-principle 
that, if a manifold admits an almost contact structure, 
it admits a genuine contact structure. 
 The existence of Engel structure is proved 
by Casals, P\'erez, del Pino, and Presas~\cite{cp3} in a similar way. 
 One of the works that has most strongly influenced this works 
is McDuff's \textit{h}-principle~\cite{mcduff} for even-contact structures. 
 Gromov's convex integration method is applied 
to show the \textit{h}-principle. 

\smallskip

  The present paper is organized as follows. 
 In Section~\ref{sec:Cdistr}, we review the Cartan $(2,3,5)$-distribution 
and introduce the notion of almost Cartan structure. 
 Then we study a characterization of almost Cartan structure. 
 As a result, assuming Theorem~\ref{thma}, we obtain Theorem~\ref{thmb} here. 
 In Section~\ref{sec:h-prin}, we recall the notion of the \textit{h}-principle, 
and review Gromov's convex integration method, 
that is a method to prove the \textit{h}-principles. 
 In Section~\ref{sec:(3,5)-distributions}, 
we study the existence and classification of $(3,5)$-distributions. 
 It plays an important roll in the proof of the main theorems. 
 In Section~\ref{sec:diff-rel}, we reformulate the problems 
in terms of the \emph{h}-principle. 
 Then we determine the differential relation to consider. 
 Finally, in Section~\ref{sec:pf_ethm}, we prove the theorems. 
 We apply Gromov's convex integration method to show that 
the differential relation determined in the previous section satisfies 
the \textit{h}-principles. 

\section{Cartan distribution and almost Cartan structure}
\label{sec:Cdistr}
  In this section, we introduce the notion of almost Cartan structure 
on $5$-dimensional manifolds, 
viewed as a formal structure corresponding 
to the genuine Cartan $(2,3,5)$-distribution. 
 Then we study its properties. 
 First of all, we review the notion of tangent distributions on manifolds 
and the derivation of tangent distributions in Section~\ref{sec:distr}. 
 Next, in Section~\ref{sec:def_235}, we define and review some basic properties 
of the Cartan~$(2,3,5)$-distribution and $(3,5)$-distribution. 
 In Section~\ref{sec:almCrtn}, 
we define the almost Cartan structure as a formal structure 
of the genuine Cartan~$(2,3,5)$-distribution 
from the viewpoint of the \textit{h}-principle. 
 Then we characterize the notion in algebraic form in some specific situation. 
 In Subsection~\ref{sec:algtopCrtn}, 
we introduce some results on the generic rank-$2$ distributions 
proved by Dave and Haller in~\cite{davehaller}. 

\subsection{Tangent distributions and their derivations} \label{sec:distr}
  In this paper, 
we deal with tangent distributions on manifolds as geometric structures. 
 First of all, we introduce the notion of such structures 
and recall some of its basic properties. 
 Readers should consult some references 
(for example, \cite{bcg3}, \cite{montgomery}, \cite{tanaka70}) 
for further study. 
 Let $M$ be a smooth manifold of dimension $n$. 
 A subbundle $\D\subset TM$ of the tangent bundle $TM$ 
with $r$-dimensional fibers 
is called a \emph{tangent distribution} (or \emph{distribution} for short) 
of rank $r$ on $M$. 
 Let $\sect{\D}$ be the set of all cross-sections (vector fields) 
of the subbundle $\D\to M$, and $\DD\subset \ub{TM}$ 
the sheaf of the vector fields. 
 Then $\D$ can be considered as a distribution 
of the $r$-dimensional tangent subspace $\D_p\subset T_pM$ 
at each point $p\in M$, 
or equivalently an $r$-dimensional plane field on $M$. 

  We deal with completely non-holonomic distributions in this paper, 
which are defined as follows. 
 Let $\D$ be a tangent distribution of rank~$r$ 
on an $n$-dimensional manifold $M$. 
 A tangent distribution $\D\subset TM$ is said 
to be \emph{completely non-holonomic} 
if, for any local frame $\{X_1,\dots,X_k\}$ of $\D$, 
its iterated Lie brackets $X_i,[X_i,X_j],[X_i,[X_j,X_l]],\dots$ 
span the tangent bundle $TM$. 
 This condition is observed from the viewpoint of tangent distributions 
derived by the Lie brackets as follows. 
 For a given local vector field $X\in \D$ defined 
on a neighborhood of a point $x\in M$, 
we have the germ $\ub{X}_x\in \DD(x)$ 
as an element of the stalk $\DD(x)$ at $x\in M$. 
 Then by inductively setting 
\begin{align*}
  \DD(x)^2&=([\DD,\DD]+\DD)_x \\
          &:=\operatorname{Span}\{[\ub{X}_x,\ub{Y}_x]+\ub{Z}_x\in\ub{TM}(x)
            \mid \ub{X}_x,\ub{Y}_x,\ub{Z}_x\in\DD(x)\}, \\
  \DD(x)^{l+1}&=([\DD,\DD^l]+\DD^l)_x \\ 
          &:=\operatorname{Span}\{[\ub{X}_x,\ub{Y}_x]+\ub{Z}_x\in\ub{TM}(x)
  \mid \ub{X}_x\in\DD(x),\ub{Y}_x,\ub{Z}_x\in\DD(x)^l\},\quad (l=2,3,\dots), 
\end{align*}
we have a flag 
$\DD\subset\DD^2\subset\dots\subset\DD^l\subset\dots\subset\ub{TM}$ 
of subsheaves. 
 The condition that the tangent distribution $\D\subset TM$ 
is completely non-holonomic 
implies that $\DD^{l_0}=\ub{TM}$ holds for some $l_0\in\mathbb{N}$. 
 We have the subspace $\D^l_x\subset T_{x}M$ as 
$
  \D^l_x:=\{X_x\in T_{x}M\mid \ub{X}_x\in \DD(x)^l\} 
$. 
 If $\dim(\D^l_x)$ is constant for any $x\in M$, 
there corresponds a tangent distribution $\D^l\subset TM$ on $M$. 
 Such distributions are called the \emph{derived distributions}\/ of $\D$. 

  In what follows, we deal with the case 
where each $\D^l$ is a tangent distribution. 
 Then we abuse the notation by writing $\D$, $X$,\dots without under-bars. 

\subsection{The Cartan $(2,3,5)$-distribution and $(3,5)$-distribution}
\label{sec:def_235}
  Now we define the Cartan~$(2,3,5)$-distributions, 
and recall some of its properties. 
%
%
\begin{dfn}
  A \emph{Cartan~$(2,3,5)$-distribution}\/
is a completely non-holonomic distribution $\D$ of rank~$2$ 
on a $5$-dimensional manifold $M$ with the derived flag 
$\D\subset\D^2\subset\D^3=TM$, where $\D^2$ 
is a tangent distribution of rank~$3$. 
 In other words, the derivations $\D$, $\D^2$, and $\D^3$ 
are distributions of rank~$2$, $3$, and $5$ respectively.
\end{dfn}

   We should remark that there are two kinds of derivations 
associated with tangent distributions. 
 Recall that the derived distribution $\D^l$ is defined 
by the Lie bracket of $\D$ itself and $\D^{l-1}$. 
 The system of such derived distributions 
is called the \emph{weak derived system}. 
 On the other hand, there is another sequence 
$\D\subset\D^{(2)}\subset\D^{(3)}\subset\cdots$ of tangent distributions 
derived from the same distribution $\D$ defined by: 
\begin{equation*}
  \D^{(2)}=\D^2:=[\D,\D],\quad 
  \D^{(l+1)}:=[\D^{(l)},\D^{(l)}],\quad l=2,3,4,\dots. 
\end{equation*}
 Such a system is called the \emph{strong derived system}. 
 It is clear that $\D^i\subset\D^{(i)}$, for all $i=3,4,5,\dots$. 
 We remark in particular that $\D^{(3)}=\D^3$. 
 Indeed, if a tangent distribution $\D$ of rank~$2$ 
is locally generated by $X,\ Y$, 
both are generated by $X,Y,[X,Y],[X,[X,Y]],[Y,[X,Y]]$. 
 The important thing here is 
that the Cartan $(2,3,5)$-distribution is defined 
in terms of strong derived system. 

  We next recall  basic properties 
concerning the Cartan~$(2,3,5)$-distributions. 
 Although tangent distributions are considered 
as subbundles of the tangent bundles, 
we deal with them as subbundles of the cotangent bundles. 
 In other words, we observe coframings of tangent distributions 
instead of framings. 
 First, we confirm the setting. 
 Let $\D\subset TM$ be a tangent distribution of rank~$2$ 
on a $5$-dimensional manifold $M$. 
 Let $\mathcal{S}(\D)\subset T^\ast M$ be the bundle of covectors 
that annihilate $\D$. 
 Note that $\mathcal{S}(\D)$ is a vector bundle of fiber dimension~$3$. 
 The distribution $\D$ can be locally described by $1$-forms 
$\alpha_1, \alpha_2, \alpha_3\in\sect{T^\ast M}$ as 
$\D=\{\alpha_1=0,\ \alpha_2=0,\ \alpha_3=0\}$. 

  We now examine the choice of the basis of $\mathcal{S}(\D)$ 
annihilating a tangent distribution $\D$. 
 If $\D$ is the Cartan~$(2,3,5)$-distribution 
then we can take a basis $\{\alpha_1,\ \alpha_2,\ \alpha_3\}$ as follows. 

%
%
\begin{prop}\label{prop:Cartanforms}
  Let $\D\subset TM$ be a tangent distribution of rank~$2$ 
on a $5$-dimensional  manifold $M$. 
 The distribution $\D$ is the Cartan~$(2,3,5)$-distribution 
if and only if $\mathcal{S}(\D)$ has a local basis 
$\{\alpha_1,\ \alpha_2,\ \alpha_3\}$ 
which satisfies the following conditions\textup{:} 
\begin{itemize}
\item $\alpha_1\wedge\alpha_2\wedge\alpha_3\wedge d\alpha_i=0\quad (i=1,2)$, 
\item $\alpha_1\wedge\alpha_2\wedge\alpha_3\wedge d\alpha_3\ne 0$, 
\item $\alpha_1\wedge\alpha_2\wedge d\alpha_1$ 
  and $\alpha_1\wedge\alpha_2\wedge d\alpha_2$ 
  are pointwise linearly independent. 
\end{itemize}
 In this case, $\D^2=[\D,\D]=\{\alpha_1=0,\ \alpha_2=0\}$. 
\end{prop}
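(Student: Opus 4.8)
The plan is to translate each of the three exterior-algebraic conditions into a statement about the Lie brackets of a local frame of $\D$, by means of the Cartan formula $d\alpha(U,V) = U(\alpha(V)) - V(\alpha(U)) - \alpha([U,V])$. I fix a local frame $\{X,Y\}$ of $\D$ and record the basic observation that whenever a $1$-form $\alpha$ annihilates a distribution containing $U$ and $V$, the first two terms vanish and the formula collapses to $d\alpha(U,V) = -\alpha([U,V])$. This single identity is what converts bracket (i.e. rank-of-derived-distribution) information into the wedge conditions.

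First I would read off the content of the first two conditions. Completing $\{\alpha_1,\alpha_2,\alpha_3\}$ to a local coframe $\{\alpha_1,\alpha_2,\alpha_3,\beta_1,\beta_2\}$ whose dual frame has the form $\{f_1,f_2,f_3,X,Y\}$, the five-form $\alpha_1\wedge\alpha_2\wedge\alpha_3\wedge d\alpha_i$ isolates exactly the $\beta_1\wedge\beta_2$-coefficient of $d\alpha_i$, namely $d\alpha_i(X,Y) = -\alpha_i([X,Y])$. Hence the first two conditions say precisely that $\alpha_1([X,Y]) = \alpha_2([X,Y]) = 0$ while $\alpha_3([X,Y]) \neq 0$; equivalently $[X,Y]$ lies in $\{\alpha_1 = \alpha_2 = 0\}$ but not in $\D$. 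Since $\{\alpha_1 = \alpha_2 = 0\}$ has rank $3$ and already contains $\D$, this is the same as saying $\D^2 = [\D,\D] = \spn{X,Y,[X,Y]} = \{\alpha_1 = 0,\ \alpha_2 = 0\}$ is a rank-$3$ distribution, which in particular establishes the last assertion of the proposition.

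Next I would interpret the third condition under the standing hypothesis that the first two hold, so that $\D^2 = \{\alpha_1 = \alpha_2 = 0\}$ is framed by $\{X,Y,Z\}$ with $Z := [X,Y]$ and $\alpha_1,\alpha_2$ annihilate $\D^2$. Here $\D^3 = [\D,\D^2] = \D^2 + \spn{[X,Z],[Y,Z]}$, so $\D^3 = TM$ holds iff the images of $[X,Z]$ and $[Y,Z]$ in the rank-$2$ quotient $TM/\D^2$ are independent, i.e. iff $\det\bigl(\alpha_j([V,Z])\bigr)_{j \in \{1,2\},\, V \in \{X,Y\}} \neq 0$. On the other hand, expanding $\alpha_1\wedge\alpha_2\wedge d\alpha_i$ in the adapted coframe and using $d\alpha_i(X,Y) = -\alpha_i([X,Y]) = 0$ (from the first condition), the only surviving coefficients are $d\alpha_i(X,Z) = -\alpha_i([X,Z])$ and $d\alpha_i(Y,Z) = -\alpha_i([Y,Z])$. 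Thus $\alpha_1\wedge\alpha_2\wedge d\alpha_1$ and $\alpha_1\wedge\alpha_2\wedge d\alpha_2$ are pointwise linearly independent exactly when that same $2\times 2$ determinant is nonzero, so, given the first two conditions, the third is equivalent to $\D^3 = TM$.

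Assembling these equivalences gives both directions. For the ``if'' direction, a basis satisfying all three conditions forces the derived flag $\D \subset \D^2 \subset \D^3 = TM$ with ranks $2,3,5$, which is the definition of a Cartan $(2,3,5)$-distribution. For the ``only if'' direction I would exhibit the basis: when $\D$ is Cartan, $\mathcal{S}(\D^2)$ is a rank-$2$ subbundle of $\mathcal{S}(\D)$, so taking $\{\alpha_1,\alpha_2\}$ to be a local basis of $\mathcal{S}(\D^2)$ and completing it by any $\alpha_3$ to a basis of $\mathcal{S}(\D)$ makes all three conditions hold by the equivalences above. The point that needs care is the order of the argument: the third condition can be read as the rank-$5$ statement only after the first two have pinned down $\D^2 = \{\alpha_1 = \alpha_2 = 0\}$, since it is precisely the first condition that annihilates the $d\alpha_i(X,Y)$ term and lets the four-forms detect the brackets $[X,Z],[Y,Z]$ rather than $Z$ itself. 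Matching the two appearances of the $2\times 2$ determinant is the only genuinely computational step, but it is routine once the coframe is adapted.
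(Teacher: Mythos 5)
Your proposal is correct and follows essentially the same route as the paper, which gives only a rough sketch based on the identity $d\alpha_i(X,Y)=X(\alpha_i(Y))-Y(\alpha_i(X))-\alpha_i([X,Y])$: the first two conditions encode $\rk\D^2=3$ with $\D^2=\{\alpha_1=0,\alpha_2=0\}$, and the third encodes $\D^3=TM$. Your write-up simply supplies the adapted-coframe bookkeeping (and the converse choice of $\alpha_1,\alpha_2$ spanning $\mathcal{S}(\D^2)$) that the paper leaves implicit.
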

\noindent
 Although it is well known 
(see \cite{bryhsu}, \cite{art11} for similar descriptions), 
we review a rough sketch of the proof 
since the meanings of the $5$-forms and $4$-forms above is important 
in the proofs of theorems. 
 As we mentioned above, the Cartan $(2,3,5)$-distribution 
is defined as a strong derived system, 
each derivation step is considered as the Lie square $[\D^j,\D^j]$. 
 Then each step corresponds to the exterior derivatives 
$d\alpha_i$ of coframes because of a formula 
$d\alpha_i(X,Y)=X(\alpha_i(Y))-Y(\alpha_i(X))-\alpha_i([X,Y])$. 
 Therefore, the first two lines of the condition imply 
that $\rk\D^2=3$ and $\D^2=\{\alpha_1=0,\alpha_2=0\}$. 
 In addition, the third line implies $\D^3=TM$. 

  It may be better to explain Proposition~\ref{prop:Cartanforms} 
from the structure equations that are given 
in the original paper~\cite{Car5v} by Cartan. 
%
%
\begin{theo}[Cartan]
  For a Cartan $(2,3,5)$-distribution $\D$ on a $5$-dimensional manifold $M$, 
there exists a local coframing $\{\alpha_1,\alpha_2,\dots,\alpha_5\}$ 
describing $\D$ as $\D=\{\alpha_1=0,\alpha_2=0,\alpha_3=0\}$ 
that satisfies the following conditions: 
\begin{equation}
  \label{eq:str-eqns}
  \begin{aligned}
    d\alpha_1&\equiv \alpha_3\wedge\alpha_4 &&\pmod{\alpha_1,\alpha_2}, \\
    d\alpha_2&\equiv \alpha_3\wedge\alpha_5 &&\pmod{\alpha_1,\alpha_2}, \\
    d\alpha_3&\equiv \alpha_4\wedge\alpha_5 &&\pmod{\alpha_1,\alpha_2,\alpha_3}.
  \end{aligned}
\end{equation}
 In this case $\D^2=[\D,\D]=\{\alpha_1=0,\alpha_2=0\}$. 
\end{theo}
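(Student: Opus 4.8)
The plan is to prove Cartan's structure equations by starting from the coframing $\{\alpha_1,\alpha_2,\alpha_3\}$ annihilating $\D$ provided by Proposition~\ref{prop:Cartanforms}, and then completing it to a full local coframing $\{\alpha_1,\dots,\alpha_5\}$ of $T^\ast M$. First I would exploit the conditions of Proposition~\ref{prop:Cartanforms}. Since $\alpha_1\wedge\alpha_2\wedge\alpha_3\wedge d\alpha_i=0$ for $i=1,2$, each $d\alpha_i$ contains no term transverse to $\D^2=\{\alpha_1=0,\alpha_2=0\}$; working modulo $\alpha_1,\alpha_2$, the $2$-forms $d\alpha_1$ and $d\alpha_2$ descend to well-defined $2$-forms on the rank-$3$ quotient spanned by the remaining coframe. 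The third condition of the proposition says these two $2$-forms are pointwise linearly independent, and the second condition $\alpha_1\wedge\alpha_2\wedge\alpha_3\wedge d\alpha_3\ne0$ says $d\alpha_3$ is nondegenerate transverse to $\D^2$.

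Next I would carry out the linear algebra in the $3$-dimensional quotient. Choose $\alpha_3$ as above, so that $\{\alpha_1,\alpha_2,\alpha_3\}$ spans $\mathcal{S}(\D)$; the quotient $T^\ast M/\mathcal{S}(\D)$ is $2$-dimensional. Modulo $\alpha_1,\alpha_2$, the forms $d\alpha_1,d\alpha_2$ live in $\Lambda^2$ of the rank-$3$ space spanned by $\bar\alpha_3$ and two further independent covectors. The key point is that $\Lambda^2$ of a $3$-dimensional space is again $3$-dimensional, so a generic decomposable form there is $\bar\alpha_3\wedge(\cdot)$. Using the independence from the third bullet, I would select $\alpha_4,\alpha_5$ completing the coframing so that $d\alpha_1\equiv\alpha_3\wedge\alpha_4$ and $d\alpha_2\equiv\alpha_3\wedge\alpha_5\pmod{\alpha_1,\alpha_2}$; this amounts to a change of basis in the complementary $2$-plane that diagonalizes the pair $(d\alpha_1,d\alpha_2)$ against $\alpha_3$. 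Finally, the nondegeneracy $\alpha_1\wedge\alpha_2\wedge\alpha_3\wedge d\alpha_3\ne0$ forces $d\alpha_3\equiv\alpha_4\wedge\alpha_5\pmod{\alpha_1,\alpha_2,\alpha_3}$ up to a nonzero factor that can be absorbed into the scaling of $\alpha_4,\alpha_5$, and the last identification $\D^2=\{\alpha_1=0,\alpha_2=0\}$ is already recorded in Proposition~\ref{prop:Cartanforms}.

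The main obstacle I anticipate is the \emph{simultaneous} normalization: I must choose a single pair $\alpha_4,\alpha_5$ that puts $d\alpha_1,d\alpha_2,d\alpha_3$ into the three prescribed forms at once, rather than normalizing each separately. Concretely, once $\alpha_4$ is fixed by the condition on $d\alpha_1$, the form $\alpha_5$ is constrained by $d\alpha_2$, and one must then verify that with these choices $d\alpha_3$ automatically takes the shape $\alpha_4\wedge\alpha_5$ modulo the lower terms, rather than acquiring extra components like $\alpha_3\wedge\alpha_4$ or $\alpha_3\wedge\alpha_5$. Reconciling these constraints is where the genericity conditions of Proposition~\ref{prop:Cartanforms} must be used in full, and it may require a further affine adjustment $\alpha_4\mapsto\alpha_4+c\,\alpha_3$, $\alpha_5\mapsto\alpha_5+c'\,\alpha_3$ to clear unwanted cross terms. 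Since the statement is attributed to Cartan and only the structure-equation normal form is asserted, I would present this normalization as an explicit, if slightly tedious, change-of-frame argument and invoke the cited references for the classical details.
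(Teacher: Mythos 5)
The paper does not prove this statement: it is quoted as a classical theorem of Cartan, with a citation to the original ``five variables'' paper, and only the neighbouring Proposition~\ref{prop:Cartanforms} receives a sketch. So there is no proof in the paper to compare against; what can be assessed is whether your argument is sound, and it is. Starting from Proposition~\ref{prop:Cartanforms}, the condition $\alpha_1\wedge\alpha_2\wedge\alpha_3\wedge d\alpha_i=0$ says exactly that, in any completion of $\{\alpha_1,\alpha_2,\alpha_3\}$ to a coframe, $d\alpha_i$ has no $\alpha_4\wedge\alpha_5$ component, hence $d\alpha_i\equiv\alpha_3\wedge\beta_i\pmod{\alpha_1,\alpha_2}$; setting $\alpha_4:=\beta_1$, $\alpha_5:=\beta_2$, the third bullet of the proposition is precisely the statement that $\alpha_4,\alpha_5$ are independent modulo $\operatorname{Span}\{\alpha_1,\alpha_2,\alpha_3\}$, so one gets a coframe. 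One remark: the ``main obstacle'' you anticipate is not actually there. Since $d\alpha_3$ is reduced modulo \emph{all three} of $\alpha_1,\alpha_2,\alpha_3$, it lives in $\Lambda^2$ of the $2$-dimensional quotient spanned by $\alpha_4,\alpha_5$, which is $1$-dimensional; the feared cross terms $\alpha_3\wedge\alpha_4$, $\alpha_3\wedge\alpha_5$ vanish by definition of the congruence, and the second bullet gives $d\alpha_3\equiv c\,\alpha_4\wedge\alpha_5$ with $c\neq0$. The only genuine loose end is absorbing this nonzero function $c$, which your affine shifts $\alpha_4\mapsto\alpha_4+c\,\alpha_3$ do not accomplish; instead one rescales, e.g.\ $\alpha_4\mapsto c\,\alpha_4$ together with $\alpha_1\mapsto c\,\alpha_1$ (which changes neither $\D$ nor the congruence $d\alpha_1\equiv\alpha_3\wedge\alpha_4\pmod{\alpha_1,\alpha_2}$, since $d(c\alpha_1)\equiv c\,d\alpha_1$ modulo $\alpha_1$). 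With that correction your outline is a complete and standard derivation of the structure equations.
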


  Another important type of tangent distributions in this paper 
is $(3,5)$-distribution. 
 A tangent distribution $\mathcal{E}$ of rank~$3$ 
on a $5$-dimensional manifold $M$ is said to be of \emph{type~$(3,5)$}, 
or is called a \emph{$(3,5)$-distribution}, 
if it satisfies $[\mathcal{E},\mathcal{E}]_x=T_xM$ at any point $x\in M$. 
 In other words, $\rk \mathcal{E}^2_x=5$. 
 Such distributions are closely related to the Cartan~$(2,3,5)$-distributions. 
 In fact, for the Cartan~$(2,3,5)$-distribution $\D$ 
on a $5$-dimensional manifold,  
the derived distribution $\mathcal{E}:=\D^2$ 
is a $(3,5)$-distribution. 
 In order to discuss the \textit{h}-principle 
for the Cartan~$(2,3,5)$-distributions, 
we need those for $(3,5)$-distributions. 
 We discuss them in Section~\ref{sec:(3,5)-distributions}. 
 Some generalization of such distributions are studied 
in~\cite{art11}, \cite{art24}. 

 The $(3,5)$-distributions are also described locally by pair of $1$-forms 
like the Cartan~$(2,3,5)$-distribution. 
 A general case of the following proposition 
is discussed in~\cite{art11}). 
%
%
\begin{prop}\label{prop:dbasis}
  Let $\mathcal{E}\subset TM$ be a distribution of rank~$3$ 
on a $5$-dimensional  manifold $M$. 
 The distribution $\mathcal{E}$ is of type~$(3,5)$
if and only if $\mathcal{S}(\mathcal{E})$ has a local basis 
$\{\alpha_1,\ \alpha_2\}$ 
which satisfies that the following two $4$-forms 
are pointwise linearly independent\textup{:} 
\begin{equation*}
  \alpha_1\wedge\alpha_2\wedge d\alpha_1, \qquad 
  \alpha_1\wedge\alpha_2\wedge d\alpha_2. 
\end{equation*}
\end{prop}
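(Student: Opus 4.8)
The plan is to translate the condition $[\mathcal{E},\mathcal{E}]_x = T_xM$ into a statement about exterior derivatives of the annihilating $1$-forms, using the same Cartan-formula mechanism sketched after Proposition~\ref{prop:Cartanforms}. Let $\mathcal{E}\subset TM$ be a rank-$3$ distribution, so $\mathcal{S}(\mathcal{E})$ is a rank-$2$ subbundle of $T^\ast M$; choose locally any basis $\{\alpha_1,\alpha_2\}$, so that $\mathcal{E}=\{\alpha_1=0,\ \alpha_2=0\}$. First I would recall that $\mathcal{E}^2_x = [\mathcal{E},\mathcal{E}]_x$ is built from Lie brackets $[X,Y]$ of sections $X,Y\in\sect{\mathcal{E}}$, and that by the formula $d\alpha_i(X,Y)=X(\alpha_i(Y))-Y(\alpha_i(X))-\alpha_i([X,Y])$, for $X,Y\in\mathcal{E}$ (so $\alpha_i(X)=\alpha_i(Y)=0$) one gets $d\alpha_i(X,Y)=-\alpha_i([X,Y])$. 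Thus $\alpha_i$ detects exactly the $\alpha_i$-component of brackets of $\mathcal{E}$-sections, and $\mathcal{E}^2_x=T_xM$ is equivalent to saying that the bracket components in the two transverse directions $\alpha_1,\alpha_2$ are both nontrivially and independently realized.

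Next I would make this precise pointwise. At a fixed $x$, complete $\{\alpha_1,\alpha_2\}$ to a full coframe and observe that $\mathcal{E}^2_x=T_xM$ fails precisely when some nonzero covector $\lambda_1\alpha_1+\lambda_2\alpha_2\in\mathcal{S}(\mathcal{E})_x$ annihilates all brackets $[X,Y]$, i.e.\ when the $2$-form $(\lambda_1\,d\alpha_1+\lambda_2\,d\alpha_2)$ vanishes identically on $\mathcal{E}_x\times\mathcal{E}_x$. The cleanest way to package ``vanishing on $\mathcal{E}_x\times\mathcal{E}_x$'' is exterior multiplication by $\alpha_1\wedge\alpha_2$: a $2$-form $\beta$ restricts to zero on the rank-$3$ space $\mathcal{E}_x=\ker\alpha_1\cap\ker\alpha_2$ if and only if $\beta\in\spn{\alpha_1,\alpha_2}\wedge T^\ast_xM$, which is exactly the condition $\alpha_1\wedge\alpha_2\wedge\beta=0$. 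Applying this with $\beta=\lambda_1\,d\alpha_1+\lambda_2\,d\alpha_2$, the existence of a nonzero annihilating covector is equivalent to linear dependence of the two $4$-forms $\alpha_1\wedge\alpha_2\wedge d\alpha_1$ and $\alpha_1\wedge\alpha_2\wedge d\alpha_2$ in the $1$-dimensional space $\Lambda^4 T^\ast_xM / (\text{lower strata})$ --- more simply, the two $4$-forms being pointwise linearly independent is equivalent to no such $\lambda$ existing, hence to $\mathcal{E}^2_x=T_xM$.

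I would then check independence of the chosen basis: if $\{\alpha_1',\alpha_2'\}$ is another local basis of $\mathcal{S}(\mathcal{E})$, the two generate the same line-pair condition because the $4$-forms transform by the (invertible, by definition of a basis) transition matrix, so linear independence of the pair is basis-independent and the stated condition is intrinsic to $\mathcal{E}$. The main obstacle I anticipate is bookkeeping the multilinear-algebra identification in the middle step --- namely verifying cleanly that ``$\beta|_{\mathcal{E}_x\times\mathcal{E}_x}=0$ iff $\alpha_1\wedge\alpha_2\wedge\beta=0$'' and that the two $4$-forms span a $2$-dimensional space iff no nonzero combination vanishes --- since these are the points where a sloppy dimension count could hide an error; everything else is the bracket-to-$d\alpha$ dictionary already used for Proposition~\ref{prop:Cartanforms}.
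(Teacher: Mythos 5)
Your proposal is correct and follows exactly the route the paper indicates: the paper omits a detailed proof of this proposition (deferring to \cite{art11} and the sketch given after Proposition~\ref{prop:Cartanforms}), and that sketch is precisely your bracket-to-$d\alpha$ dictionary via $d\alpha_i(X,Y)=-\alpha_i([X,Y])$ on sections of $\mathcal{E}$, combined with the observation that a $2$-form vanishes on $\mathcal{E}_x$ iff its wedge with $\alpha_1\wedge\alpha_2$ vanishes. The only blemish is the parenthetical claim that the relevant quotient of $\Lambda^4T^\ast_xM$ is $1$-dimensional (it is $3$-dimensional), but you immediately replace it with the correct formulation, so the argument stands.
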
 

  In this paper we focus on tangent distributions of these types. 
 Similar types of distributions are intensively studied 
in~\cite{mar-agu_delPino} and \cite{mar-agu} in a similar way. 

\subsection{Almost Cartan structure}\label{sec:almCrtn}
  We introduce the notion of almost Cartan structure on $5$-manifolds, 
in this subsection. 
 Recall that from the Cartan~$(2,3,5)$-distribution $\D$ 
on a $5$-dimensional manifold, 
there is an induced sequence $\D\subset\D^2\subset\D^3=TM$ 
of tangent distributions (see Definition in Section~\ref{sec:distr}). 
 Since the sequence is derived by the Lie brackets, 
there corresponds differential forms with certain properties 
(see Proposition~\ref{prop:Cartanforms}). 
 By forgetting the exterior derivatives, we define the notion of 
almost Cartan structure 
as a formal structure for the Cartan~$(2,3,5)$-distribution as follows. 
%
%
\begin{df}[almost Cartan structure]\label{def:alm_cartan}
  An \emph{almost Cartan structure\/} on a $5$-dimensional manifold $M$ 
is defined as a tuple $(\D\subset\mathcal{E},\{\omega_1,\omega_2;\omega_3\})$, 
where $\D\subset\mathcal{E}$ is a sequence of tangent distributions 
of ranks~$2$ and~$3$ respectively on $M$, 
and $\omega_1$, $\omega_2$, $\omega_3$ are $2$-forms 
that satisfy the following conditions: 
\begin{enumerate}\setlength{\itemindent}{1.7em}
\item[(1)-(i)\phantom{i}] $\omega_1$ and $\omega_2$ are degenerate on $\D$, 
\item[(1)-(ii)] $\omega_3$ is non-degenerate on $\D$, 
\item[(2)\phantom{-(ii)}] $\omega_1$ and $\omega_2$ 
are pointwise linearly independent on $\mathcal{E}$. 
\end{enumerate}
\end{df}
\noindent
 Note that in a precise sense, the $2$-form $\omega_3$ takes values in $TM/\D$, 
and those $\omega_1,\omega_2$ take values in $TM/\mathcal{E}$ 
due to orientations. 
 We should remark that the ``genuine'' Cartan $(2,3,5)$-distribution 
is an almost Cartan structure, in some sense. 
 Indeed, if $\D\subset TM$ is a genuine Cartan $(2,3,5)$-distribution 
on a $5$-dimensional manifold $M$, 
from Proposition~\ref{prop:Cartanforms}, 
there locally exists a triple $(\alpha_1,\ \alpha_2,\ \alpha_3)$ of $1$-forms 
that defines $\D$ as $\D=\{\alpha_1=0,\ \alpha_2=0,\ \alpha_3=0\}$ satisfying 
that the following conditions: 
\begin{itemize}
\item $\alpha_1\wedge\alpha_2\wedge\alpha_3\wedge d\alpha_i=0\quad (i=1,2)$, 
\item $\alpha_1\wedge\alpha_2\wedge\alpha_3\wedge d\alpha_3\ne 0$, 
\item $\alpha_1\wedge\alpha_2\wedge d\alpha_1$ 
  and $\alpha_1\wedge\alpha_2\wedge d\alpha_2$ 
  are pointwise linearly independent. 
\end{itemize}
 This implies the $2$-forms $d\alpha_1$, $d\alpha_2$ and $d\alpha_3$ determine 
three $(TM/\D)$-valued $2$-forms $[d\alpha_1], [d\alpha_2]$, and $[d\alpha_3]$ 
that make $\D$ into an almost Cartan structure 
$(\D\subset\D^2,\{[d\alpha_1],[d\alpha_2];[d\alpha_3]\})$. 

%
%
%
%
%
  If the tangent distribution $\D$ is orientable, 
almost Cartan structure is characterized as follows. 
 We remark that if a manifold admits an orientable almost Cartan structure, 
the manifold should be orientable 
from the definition of almost Cartan structure. 
%
%
\begin{prop}\label{prop:ori-almctn}
  Let $M$ be an orientable $5$-dimensional manifold. 
 $M$ admits an orientable almost Cartan structure 
$(\D\subset\mathcal{E},\{\omega_1,\omega_2;\omega_3\})$ 
if and only if  
the tangent bundle of $M$ splits as $TM\cong\xi\oplus\epsilon^1\oplus\xi$, 
where $\xi$ is an orientable tangent distribution of rank~$2$, 
and $\epsilon^1$ is a trivial line bundle. 
\end{prop}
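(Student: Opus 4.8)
The plan is to prove both implications by translating the differential-form conditions in Definition~\ref{def:alm_cartan} into bundle isomorphisms, using the two quotient bundles $L:=\mathcal{E}/\D$ (rank $1$) and $Q:=TM/\mathcal{E}$ (rank $2$). Fixing an auxiliary Riemannian metric I identify $TM\cong\D\oplus L\oplus Q$ as vector bundles, so the whole question becomes how the forms $\omega_1,\omega_2,\omega_3$ constrain $L$ and $Q$ relative to $\D$. The key observation, read off from the genuine model in the structure equations~\eqref{eq:str-eqns}, is the following dictionary. Since $\omega_1,\omega_2$ are degenerate on $\D$ (condition (1)-(i)) and take values in $Q$, each restricts on $\mathcal{E}$ to a section of $(\D^*\otimes L^*)\otimes Q$; packaging them into a single $Q$-valued $2$-form $\Omega$ on $\mathcal{E}$ vanishing on $\D$ yields a bundle map $\phi_\Omega\colon \D\otimes L\to Q$, and condition (2) (pointwise linear independence on $\mathcal{E}$) is exactly the statement that $\phi_\Omega$ is an isomorphism. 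Likewise $\omega_3$, being non-degenerate on $\D$ and $(TM/\D)$-valued, determines a nowhere-zero section of $\Lambda^2\D^*\otimes(TM/\D)$, which records an orientation datum for $\D$.

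For the direction ``almost Cartan structure $\Rightarrow$ splitting'' I would argue as follows. From the dictionary, condition (2) gives $Q\cong\D\otimes L$. Because the structure is orientable, $\D$ is an oriented rank-$2$ bundle, so $w_1(\D)=0$; and $w_1(Q)=w_1(\D\otimes L)=w_1(\D)+2\,w_1(L)=w_1(\D)$ in $\Z/2\Z$. The Whitney formula for $TM\cong\D\oplus L\oplus Q$ then gives $w_1(TM)=w_1(\D)+w_1(L)+w_1(Q)=w_1(L)$, and since $M$ is orientable $w_1(TM)=0$, whence $w_1(L)=0$. An orientable real line bundle is trivial, so $L\cong\epsilon^1$, and therefore $Q\cong\D\otimes\epsilon^1\cong\D$. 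Substituting into the splitting yields $TM\cong\D\oplus\epsilon^1\oplus\D$; taking $\xi:=\D$, which is orientable of rank $2$, gives the required form. One may alternatively obtain the triviality of $L$ directly from $\omega_3$, since non-degeneracy on $\D$ identifies $\det\D$ with a line in $TM/\D$ and $\D$ is oriented.

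Conversely, given $TM\cong\xi\oplus\epsilon^1\oplus\xi$ I construct the structure explicitly. Set $\D:=\xi$ (the first summand) and $\mathcal{E}:=\xi\oplus\epsilon^1$, so that $\D\subset\mathcal{E}\subset TM$ have ranks $2$ and $3$, $L=\mathcal{E}/\D\cong\epsilon^1$, and $Q=TM/\mathcal{E}\cong\xi$ (the second summand). Let $t$ be a nowhere-zero section of $\epsilon^1$ and choose, using the orientation of $\xi$ together with the metric, a nowhere-zero area form $\sigma\in\sect{\Lambda^2\xi^*}$. For $\omega_3$ I take $\sigma$ tensored with the image of $t$ in $TM/\D$ and extend it to $TM$ through the projection onto $\D$; this is non-degenerate on $\D$, giving (1)-(ii). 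For $\omega_1,\omega_2$ I let $\Omega$ be the $Q$-valued $2$-form on $\mathcal{E}$ whose associated map $\phi_\Omega\colon\D\otimes L\to Q$ is the canonical isomorphism $\xi\otimes\epsilon^1\to\xi$ induced by $t$ and the identity of $\xi$, pulled back to $TM$ via the projection $TM\to\mathcal{E}$; by construction $\Omega$ vanishes on $\D$, giving (1)-(i), and $\phi_\Omega$ is an isomorphism, which is condition (2). All orientation data come from the orientations of $\xi$ and $\epsilon^1$, so the resulting structure is orientable.

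The main obstacle is making the dictionary of the first paragraph precise and intrinsic---in particular pinning down, in the formal (non-genuine) setting, exactly how the valued $2$-forms $\omega_1,\omega_2$ assemble into the bundle map $\phi_\Omega$, and verifying that ``pointwise linear independence on $\mathcal{E}$'' is equivalent to $\phi_\Omega$ being an isomorphism rather than merely injective on one factor. The bookkeeping of orientations flagged in the remark after Definition~\ref{def:alm_cartan} (which quotient each form takes values in) must be tracked carefully, so that the $w_1$-computation is non-circular and the forms constructed in the converse genuinely satisfy the valued, oriented version of the three conditions.
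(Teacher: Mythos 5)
Your proposal is correct, and the converse direction (splitting $\Rightarrow$ structure) is essentially the paper's own construction: the paper also sets $\D:=\xi$, $\mathcal{E}:=\xi\oplus\epsilon^1$ and writes the forms explicitly as $\omega_1=\gamma\wedge\beta_1$, $\omega_2=\gamma\wedge\beta_2$, $\omega_3=\beta_1\wedge\beta_2$ in a dual coframe, which is exactly your $\phi_\Omega=$ canonical isomorphism plus area form, just in coordinates. In the forward direction your route differs in two respects worth noting. First, the paper never forms the bundle map $\phi_\Omega\colon\D\otimes L\to Q$; instead it factors each $\omega_i\vert_{\mathcal{E}}$ as $\eta\wedge\theta_i$ with $\eta\vert_\D=0$, reads off that $\{\theta_1,\theta_2\}$ frames $\D^\ast$, defines $TM/\mathcal{E}\to\D^\ast$ by sending a frame $v_i$ of $TM/\mathcal{E}$ to $\theta_i$, and then uses the non-degeneracy of $\omega_3$ to close the loop via $\D\cong\D^\ast$. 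Your $Q\cong\D\otimes L$ is the intrinsic repackaging of the same linear algebra and has the advantage of making the well-definedness of the isomorphism (which in the paper depends on the ``$TM/\mathcal{E}$-valued'' reading of the pair $(\omega_1,\omega_2)$ to patch the local choices of $\{v_1,v_2\}$ and $\{\theta_1,\theta_2\}$) automatic; it also does not need $\omega_3$ for this step. Second, for the triviality of $L=\mathcal{E}/\D$ the paper simply observes that $\ker\omega_3\subset\mathcal{E}$ is an orientable line bundle, whereas you run a Stiefel--Whitney computation $w_1(TM)=w_1(\D)+w_1(L)+w_1(\D\otimes L)=w_1(L)$; both are valid, and your version has the small merit of only invoking orientability of $M$ and $\D$ rather than of all the constituents of the structure. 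The one point you flag as an ``obstacle''---that condition (2) together with (1)-(i) forces $\phi_\Omega$ to be an isomorphism and not merely injective on a factor---is in fact immediate: by (1)-(i) each $\omega_i\vert_{\mathcal{E}}$ lies in the two-dimensional space $\D^\ast\otimes L^\ast$ (tensored with $Q$), and linear independence of two elements of a two-dimensional space is exactly invertibility of the corresponding $2\times 2$ matrix, which is the matrix of $\phi_\Omega$.
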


\begin{proof}
  First, assume that $M$ admits the splitting 
$TM\cong\xi\oplus\epsilon^1\oplus\xi$. 
 Since $\xi$ is orientable, 
we may locally choose oriented frames $\{u_1, u_2\}$, $\{v_1, v_2\}$ 
for the two copies of $\xi$. 
 Similarly the trivial line bundle $\epsilon^1$ has its framing $\{\ell\}$. 
 In other words, we have 
$TM\cong\spn{u_1,u_2}\oplus\spn{\ell}\oplus\spn{v_1,v_2}$. 
 Let $\beta_1, \beta_2, \gamma, \alpha_1,\alpha_2$ be the dual coframe
to $u_1, u_2, \ell, v_1,v_2$. 
 Then set 
\begin{align*}
  &\D:=\xi=\spn{u_1,u_2}=\{\alpha_1=0, \alpha_2=0, \gamma=0\}, \\
  & \mathcal{E}:=\xi\oplus\epsilon^1=\spn{u_1,u_2}\oplus\spn\ell
    =\{\alpha_1=0, \alpha_2=0\}, 
\end{align*}
and define 
\begin{align*}
  &\omega_1:=\gamma\wedge\beta_1,\quad \omega_2:=\gamma\wedge\beta_2,\quad 
    \omega_3:=\beta_1\wedge\beta_2. 
\end{align*}
 Then $(\D\subset\mathcal{E},\{\omega_1,\omega_2;\omega_3\})$ 
is an orientable almost Cartan structure on $M$. 
 Indeed 
\begin{align*}
  &\omega_i\vert_\D=(\gamma\wedge\beta_i)\vert_{\spn{u_1,u_2}}=0, (i=1,2), \\ 
  &\omega_3\vert_\D=(\beta_1\wedge\beta_2)\vert_{\spn{u_1,u_2}}
    =\beta_1\wedge\beta_2 \quad \text{is non-degenerate} \\
  &\omega_i\vert_\mathcal{E}=(\gamma\wedge\beta_i)\vert_{\spn{u_1,u_2,l}}
    =\gamma\wedge\beta_i,\ (i=1,2) \quad 
    \text{are pointwise linearly independent}. 
\end{align*}
 It is instructive to compare this construction 
with the structure equations, Equations~\eqref{eq:str-eqns}, 
of the Cartan $(2,3,5)$-distributions. 

  Conversely, assume that there exists an orientable almost Cartan structure 
$(\D\subset\mathcal{E}, \{\omega_1,\omega_2;\omega_3\})$ 
on an orientable $5$-dimensional manifold $M$. 
 Then the tangent bundle splits as 
\begin{equation}\label{eq:presplitting}
  TM\cong\D\oplus(\mathcal{E}/\D)\oplus TM/\mathcal{E}. 
\end{equation}
 Then we consider each bundle in turn. 

\paragraph{(1) The bundle $\D$. }\ 
 The first bundle $\D$ is an orientable tangent distribution of rank~$2$. 
 We take this $\D\subset TM$ itself as $\xi$. 

\paragraph{(2) The bundle $\mathcal{E}/\D$. }\ 
 Since $\D$ is an orientable tangent distribution of rank~$2$, 
and $\omega_3\vert_\D$ is non-degenerate, 
$\epsilon^1:=\ker\omega_3$ is a trivial line bundle 
and $\mathcal{E}=\D\oplus\epsilon^1$. 
 Therefore we have $\mathcal{E}/\D\cong\epsilon^1$. 

\paragraph{(3) The bundle $TM/\mathcal{E}$. }\ 
 We show that $TM/\mathcal{E}\cong\D$. 
 Since there exists a $2$-form $\omega_3$ 
which is non-degenerate on $\D$, 
the bundle $\D$ is isomorphic to its dual bundle $\D^\ast$, 
that is, $\D\cong \D^\ast$. 
 Therefore it suffices to show that $TM/\mathcal{E}\cong \D^\ast$. 

  In order to do that, we first choose a local coframe for $\D^\ast$. 
 From the definition of almost Cartan structure 
(see Definition~\ref{def:alm_cartan}), 
the $2$-forms $\omega_1$, $\omega_2$ are degenerate 
along the distribution $\D$ of rank~$2$ 
and non-degenerate along the distribution $\mathcal{E}\cong\D\oplus\epsilon^1$ 
of rank~$3$. 
 In addition, they are pointwise linearly independent on $\mathcal{E}$. 
 Then, by linear algebra in dimension~$3$, 
we can choose a suitable local coframe 
$\{\eta, \theta_1, \theta_2\}$ on $\mathcal{E}$ 
so that 
\begin{equation*}
  \omega_i=\eta\wedge\theta_i,\quad i=1,2, 
\end{equation*}
where $\eta, \theta_i$ are $1$-forms that satisfy 
\begin{equation*}
  \eta\vert_\D=0,\quad \eta\vert_{\epsilon^1}\ne 0,\quad 
  \theta_i\vert_{\epsilon^1}=0. 
\end{equation*}
 Since $\omega_1,\ \omega_2$ are linearly independent, 
$\theta_1, \theta_2$ are linearly independent on $\D$. 
 Then, in this sense, $\{\theta_1, \theta_2\}$ is considered 
as a basis of $\D^\ast$. 

  Next, we construct an isomorphism between $TM/\mathcal{E}$ and $\D^\ast$. 
 Because $M$ and $\mathcal{E}$ are orientable, 
the quotient bundle $TM/\mathcal{E}$ is also an orientable bundle of rank~$2$. 
 Then we take a local oriented frame $\{v_1,\ v_2\}$ of $TM/\mathcal{E}$ 
and use the same notation for their representative in $TM$. 
 Then, using these bases, we can define a pointwise linear mapping 
\begin{equation*}
  \phi_p\colon T_pM/\mathcal{E}_p\to\D^\ast_p, \qquad
  \phi_p(v_i)=\theta_i\vert_{\D_p},\ i=1,2.
\end{equation*}
 By construction, $\phi_p$ is a linear isomorphism at each $p\in M$, 
and hence induces the bundle isomorphism $\phi\colon TM/\mathcal{E}\to\D^\ast$.

 Now we have examined all the bundles involved in the splitting 
in Equation~\ref{eq:presplitting}. 
 By setting $\D=\xi$, 
we obtain that $\mathcal{E}/\D=:\epsilon^1$ is a trivial line bundle, 
and that $TM/\mathcal{E}\cong\xi$. 
 Consequently, we conclude that we have the splitting 
\begin{equation*}
  TM\cong \xi\oplus\epsilon^1\oplus\xi, 
\end{equation*}
for an orientable distribution $\xi$ and a trivial line bundle $\epsilon^1$. 
%
%
%
%
%
%
\end{proof}

\subsection{Algebraic topology on almost Cartan structures}
\label{sec:algtopCrtn}
  When the tangent distributions are orientable, 
and hence the $5$-dimensional manifolds are orientable, 
 Dave and Haller~\cite{davehaller} studied the obstruction-theoretic condition
for the existence of the splitting $TM\cong \xi\oplus\epsilon^1\oplus\xi$. 
 In this subsection, we introduce such a result that is 
closely related to results in this paper. 

  In order to describe the theorem, we introduce some obstructions. 
 A manifold $M$ is said to be \emph{spin}\/ if it admits a spin structure. 
 In other words, this means that $w_1(M)=0$ and $w_2(M)=0$, 
where $w_i(M)\in H^i(M;\mathbb{Z}/2\mathbb{Z})$ 
is the $i$-th Stiefel-Whitney class of $M$. 
 For a spin manifold $M$, let $\frac{1}{2}p_1(M)\in H^4(M;\mathbb{Z})$ 
be the first fractional Pontryagin class of $M$. 
 For a closed connected manifold $N$, let $\kappa(N)\in\mathbb{Z}/2\mathbb{Z}$ 
be the (real) Kervaire semicharacteristic of $N$, 
that is defined (see \cite{kervaire}) as
\begin{equation*}
  \kappa(N)=\sum_{q:\text{even}}\dim H^q(N;\mathbb{R}) \quad\bmod 2. 
\end{equation*}
 For a closed connected spin manifold $M$, 
it is known (see \cite{lusmilpet}) 
that $\kappa(M)$ coincides with the $\mathbb{Z}_2$ semicharacteristic $k_2(M)$, 
\begin{equation*}
  \kappa(M)=\kappa_2(M):=\sum_{q:\text{even}}\dim H_q(M;\mathbb{Z}/2\mathbb{Z}). 
\end{equation*}

  Then the necessary and sufficient condition 
for the existence of the splitting $TM\cong \xi\oplus\epsilon^1\oplus\xi$, 
due to Dave and Haller~\cite{davehaller}, is described as follows. 
%
%
\begin{thrm}[\cite{davehaller}]\label{thm:dh}
  \textup{(I)} \textup{(Open case)}\ 
 Let $M$ be an open connected $5$-dimensional manifold. \\
\textup{(1)}\  The tangent bundle of $M$ splits as 
$TM\cong\xi\oplus\epsilon^1\oplus\xi$ for some orientable tangent distribution 
or subbundle $\xi$ of rank~$2$ on $M$, 
if and only if $M$ is spin. \\
\textup{(2)}\ Suppose that $M$ is spin. 
 Let $\xi$ be an orientable tangent distribution of rank~$2$ on $M$. 
 Then $M$ admits the splitting $TM\cong\xi\oplus\epsilon^1\oplus\xi$ 
if and only if $\frac{1}{2}p_1(M)=e(\xi)^2$, 
where $e(\xi)\in H^2(M;\mathbb{Z})$ is the Euler class. 

\noindent
\textup{(II)} \textup{(Closed case)}\ 
 Let $M$ be a closed connected $5$-dimensional manifold. \\ 
\textup{(1)}\  The tangent bundle of $M$ splits as 
$TM\cong\xi\oplus\epsilon^1\oplus\xi$ for some orientable tangent distribution 
or subbundle $\xi$ of rank~$2$ on $M$, 
if and only if 
$M$ is spin with vanishing Kervaire semicharacteristic $\kappa(M)=0$. \\
\textup{(2)}\ Suppose that $M$ is spin 
with the vanishing Kervaire semicharacteristic $\kappa(M)=0$. 
 Let $\xi$ be an orientable tangent distribution of rank~$2$ on $M$. 
 Then $M$ admits the splitting $TM\cong\xi\oplus\epsilon^1\oplus\xi$ 
if and only if $\frac{1}{2}p_1(M)=e(\xi)^2$. 
\end{thrm}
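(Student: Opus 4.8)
The plan is to read the splitting $TM\cong\xi\oplus\epsilon^1\oplus\xi$ as a reduction of the structure group of $TM$ and to settle both directions by characteristic-class bookkeeping (for necessity) and obstruction theory (for sufficiency). Throughout I treat an orientable rank-$2$ bundle $\xi$ as a complex line bundle, so that $e(\xi)\in H^2(M;\Z)$ is its Chern class and $p_1(\xi)=e(\xi)^2$. A convenient first step is to peel off the trivial line: since $\dim M$ is odd we have $\chi(M)=0$, so $TM$ carries a nowhere-vanishing section and splits as $TM\cong\epsilon^1\oplus\nu$ with $\nu$ oriented of rank $4$. The theorem then becomes the question of when $\nu\cong\xi\oplus\xi$, i.e. when the classifying map $M\to B\mathrm{SO}(4)$ lifts through the map $B\mathrm{SO}(2)\to B\mathrm{SO}(4)$ induced by the diagonal inclusion $A\mapsto A\oplus A$.

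For necessity I would simply compute. Assuming the splitting, the total Stiefel--Whitney class is $w(TM)=w(\xi)^2=(1+w_2(\xi))^2=1+w_2(\xi)^2$ over $\Z/2\Z$, so $w_1(M)=w_2(M)=0$ and $M$ is spin; this is independent of whether $M$ is open or closed. For the Pontryagin class, multiplicativity modulo $2$-torsion together with $p_1(\xi)=e(\xi)^2$ gives $p_1(M)=p_1(\nu)=2e(\xi)^2$, hence $\frac{1}{2}p_1(M)=e(\xi)^2$; this establishes the ``only if'' halves of (I)(2), (II)(2) and, with the spin conclusion, of the two part-(1) statements. In the closed case one must still extract the vanishing of $\kappa(M)$, which I would obtain from the Atiyah--Dupont theory of the semicharacteristic as the primary obstruction to such reductions on a closed $(4k+1)$-manifold.

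For sufficiency I would run the obstruction theory for the lift along $B\mathrm{SO}(2)\to B\mathrm{SO}(4)$, whose fibre is $\mathrm{SO}(4)/\mathrm{SO}(2)$ for the diagonal $\mathrm{SO}(2)$; its low-dimensional homotopy governs the successive obstructions. The point is that the assumed data control exactly these: the spin condition $w_1=w_2=0$ annihilates the obstructions in degrees $2$ and $3$, while the relation $\frac{1}{2}p_1(M)=e(\xi)^2$ forces the degree-$4$ obstruction to vanish once $\xi$ is chosen (part 2), or can be so chosen (part 1), with the prescribed Euler class. An open connected $5$-manifold is homotopy equivalent to a CW complex of dimension at most $4$, so there is no top obstruction and the lift exists as soon as the obstructions in degrees $\le 4$ vanish; this yields case (I).

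The main obstacle is the closed case, where a genuine degree-$5$ obstruction survives in $H^5(M;\pi_4(\mathrm{SO}(4)/\mathrm{SO}(2)))$, and the heart of the matter is to identify it with the Kervaire semicharacteristic $\kappa(M)\in\Z/2\Z$. I would establish this by comparing the reduction problem with the classical problem of constructing linearly independent vector fields on $(4k+1)$-manifolds, where Atiyah's index-theoretic computation identifies the mod-$2$ top obstruction with the real semicharacteristic; on a spin manifold one then passes to $\Z/2\Z$-homology via the equality $\kappa(M)=\kappa_2(M)$ recorded above. Granting this identification, the degree-$5$ obstruction vanishes precisely when $\kappa(M)=0$, completing (II)(1) and (II)(2). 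The delicate points I would have to verify are that this top obstruction genuinely collapses to a single copy of $\Z/2\Z$ and is natural enough to depend only on $M$ (and not on the auxiliary line field, nor on $\xi$ beyond its Euler class), and that the diagonal---rather than independent---nature of the two $\xi$-summands introduces no obstruction beyond those already accounted for.
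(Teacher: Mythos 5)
A preliminary remark: Theorem~\ref{thm:dh} is not proved anywhere in this paper --- it is imported verbatim from Dave and Haller~\cite{davehaller} and used as a black box, so there is no internal proof to measure your attempt against. Judged on its own terms, your outline follows the natural (and, in broad shape, the actual) strategy: characteristic-class computations for necessity and obstruction theory for the lift along $B\mathrm{SO}(2)\to B\mathrm{SO}(4)$ for sufficiency. The necessity computations $w(TM)=(1+w_2(\xi))^2=1+w_2(\xi)^2$ and $p_1(M)=2e(\xi)^2$ are correct, and the observation that an open $5$-manifold has the homotopy type of a $4$-complex correctly disposes of the top obstruction in case (I).

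There is, however, a genuine gap precisely where the theorem has its content. The identification of the degree-$5$ obstruction with $\kappa(M)$ is asserted, not proved, and it is not a routine transfer from Atiyah's two-vector-field theorem: the fibre of your lifting problem is $\mathrm{SO}(4)/\mathrm{SO}(2)_{\mathrm{diag}}$, which is not the Stiefel manifold $V_2(\R^5)=\mathrm{SO}(5)/\mathrm{SO}(3)$ governing the span-two problem (the former has $\pi_1\cong\Z/2\Z$ and $\pi_2\cong\Z$, while the latter is $2$-connected), so ``comparing with the classical problem'' requires an explicit map of fibrations, a computation of $\pi_4$ of the relevant fibre, and an argument that the top obstruction --- a priori defined only relative to a chosen lift over the $4$-skeleton, and a priori a coset rather than a single class --- collapses to a well-defined element of $H^5(M;\Z/2\Z)$ depending only on $M$ and not on the auxiliary line field or on $\xi$. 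You flag exactly these points as ``delicate points I would have to verify,'' but they constitute essentially the entire closed case. A smaller but real issue is the step from $p_1(M)=2e(\xi)^2$ to $\frac{1}{2}p_1(M)=e(\xi)^2$: the fractional Pontryagin class is a specific integral lift attached to the spin structure, and equality including possible $2$-torsion does not follow from the doubled identity alone. Since the paper itself defers all of this to~\cite{davehaller}, the correct course here is to cite their proof rather than to sketch a new one.
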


  In the case where $5$-dimensional manifolds 
and tangent distributions of rank~$2$ are orientable, 
now we have a characterization of almost Cartan structures 
(Proposition~\ref{prop:ori-almctn}) 
and an obstruction-theoretic description 
of the existence of an almost Cartan structure (Theorem~\ref{thm:dh}). 
 Then we show Theorem~\ref{thmb} assuming Theorem~\ref{thma}. 

\begin{proof}[Proof of Theorem~\ref{thmb}]
 Let $M$ be an orientable closed connected $5$-dimensional manifold. 

\noindent
(1)\ Theorem~\ref{thma} implies 
that the existence of an orientable genuine Cartan $(2,3,5)$-distribution 
is equivalent to that of an orientable almost Cartan structure. 
 The existence of an orientable almost Cartan structure 
is equivalent to that of the splitting $TM\cong\xi\oplus\epsilon^1\oplus\xi$, 
on account of Proposition~\ref{prop:ori-almctn}. 
 Then, from Theorem~\ref{thm:dh}, 
we conclude that $M$ admits an orientable genuine Cartan $(2,3,5)$-distribution 
if and only if $M$ is spin with vanishing Kervaire semicharacteristic 
$\kappa(M)=0\in\mathbb{Z}/2\mathbb{Z}$. 

\noindent
(2)\ Suppose that $M$ is spin with vanishing Kervaire semicharacteristic 
$\kappa(M)=0$. 
 From part~(1) of Theorem~\ref{thmb}, that we have just proved, 
there exists an orientable Cartan $(2,3,5)$-distribution. 

  First, let $\D$ be an orientable Cartan $(2,3,5)$-distribution. 
 Then, there induced an orientable almost Cartan structure 
$(\D\subset\D^2,\{\omega_1,\omega_2;\omega_3\})$ 
by the discussion after the definition of the almost Cartan structure 
(or Theorem~\ref{thma}). 
 This implies that the tangent bundle $TM$ split as 
$TM\cong\D\oplus \epsilon^1\oplus\D$ by Proposition~\ref{prop:ori-almctn}. 
 Then, on account of Theorem~\ref{thm:dh}, 
we obtain $e(\D)^2=\frac{1}{2}p_1(M)$. 

  Next, assume that $e(\D)^2=\frac{1}{2}p_1(M)$ 
for an orientable tangent distribution $\D$ 
that is not exactly a Cartan $(2,3,5)$-distribution. 
 By Theorem~\ref{thm:dh}, the tangent bundle $TM$ split 
as $\D\oplus\epsilon^1\oplus\D$. 
 Proposition~\ref{prop:ori-almctn} implies 
that there exists an open almost Cartan structure 
$(\D\subset\mathcal{E},\{\omega_1,\omega_2;\omega_3\})$. 
 Then, on account of Theorem~\ref{thma}, 
there exists an orientable genuine Cartan $(2,3,5)$-distribution $\tilde{\D}$ 
that is homotopic to the given $\D$. 
 Then we obtain $e(\tilde{\D})=e(\D)$. 
\end{proof}
\noindent
 Then it remains to show Theorem~\ref{thma} and Theorem~\ref{thmc}. 

\section{Homotopy principle and Gromov's convex integration method}
\label{sec:h-prin}
  We introduce the notion of the \textit{h}-principles 
in Section~\ref{sec:diff_rel}. 
 It is a key tool to show Theorems~\ref{thma} and~\ref{thmc}. 
 In order to show that our objects corresponding to geometric structures 
satisfy the \textit{h}-principles, 
we use the convex integration method due to Gromov. 
 In Section~\ref{sec:cvx_itgr}, we introduce that method. 
 In order to make this paper self-contained, 
some basic things that are needed in this paper are defined there. 
 Readers should refer to the literature 
for further study on the \textit{h}-principles and convex integration method 
(see~\cite{gromov_pdr}, \cite{elmi_book}, \cite{madachi}, \cite{spring}). 

\subsection{Differential relations and homotopy principle}\label{sec:diff_rel}
 We introduce the notion of the \emph{h}-principle
for partial differential relations.
 It is a general viewpoint for solving certain problems 
related to differentiation.

  First, we review basic notions concerning fibrations and jets. 
 Let $p\colon X\to M$ be a fibration over a manifold $M$. 
 The \emph{$r$-jet extension} $p^r\colon X^{(r)}\to M$ 
of the fibration $p\colon X\to M$ 
is defined as the manifold $X^{(r)}$ consisting of all $r$-jets 
of cross-sections $M\to X$ of the fibration at all $v\in V$ 
and the projection $p^r\colon X^{(r)}\to M$. 
 $X^{(r)}$ is called the \emph{$r$-jet space}. 
 The cross-section $J^r_f\colon M\to X^{(r)}$ of $p^r\colon X^{(r)}\to M$ 
is the $r$-jet extension of a cross-section $f\colon M\to X$. 

  Then, we introduce a description method for certain problems. 
 Let $p\colon X\to M$ be a fibration. 
 A \emph{differential relation} of order $r$ for sections 
of $p\colon X\to M$ 
is defined as a subset $\mathcal{R}$ of the $r$-jet space $\rjet X$. 
 For example, a system of differential equations can be regarded 
as a subset of the jet space $J^r(\mathbb{R}^n,\mathbb{R}^m)$, 
that is, a differential relation. 
 There are two kinds of solutions to a differential relation. 
 A \emph{formal solution}\/ 
to the differential relation $\mathcal{R}\subset\rjet{X}$ 
is defined to be a section $F\colon M\to\rjet{X}$ 
of the fibration $p^r\colon \rjet{X}\to M$ 
that satisfies $F(M)\subset\mathcal{R}$. 
 Let $\secc\mathcal{R}$ denote the space of formal solution to $\mathcal{R}$. 
 On the other hand, a \emph{genuine solution}\/ to $\mathcal{R}\subset\rjet{X}$ 
is defined to be a section $f\colon M\to X$ of $p\colon X\to M$ 
whose $r$-jet satisfies $(J^r_f)(M)\subset\mathcal{R}$. 
 Let $\operatorname{Sol}\mathcal{R}$ denote the space of genuine solutions 
to $\mathcal{R}$. 

  Some examples of differential relations come from singularity theory. 
 Roughly speaking, in some cases, 
a singular point 
is a point 
where certain expressions related to derivatives vanish. 
 Then, to singular points, there corresponds the set $\Sigma\subset\rjet X$ 
called the \emph{singularity}. 
 Setting $\mathcal{R}:=\rjet X\setminus\Sigma$, 
we have an open differential relation. 
 In many cases, it is important for singularity theory 
to solve such differential relations. 

  Now, we introduce the notion of the homotopy principle. 
 In general, the formal solvability of a differential relation 
is just a necessary condition 
for the genuine solvability of the differential relation. 
 In some cases, the formal one is sufficient for the genuine solvability. 
 The notion of the homotopy principle was introduced by Gromov and Eliashberg 
to formalize such phenomena (see~\cite{groeli71}). 
 Let $p\colon X\to M$ be a fibration, 
and $\mathcal{R}\subset \rjet X$ a differential relation. 
 $\mathcal{R}$ is said to satisfy the \emph{homotopy principle}\/ 
(or \emph{h-principle}\/ for short) 
if every formal solution to $\mathcal{R}$ is homotopic in $\secc{\mathcal{R}}$ 
to a genuine solution to $\mathcal{R}$. 
 In this paper, we use the notion to show the existence of a genuine solution. 
 In addition to that, 
there are some flavors of the \textit{h}-principle (see~\cite{elmi_book}). 
 In this paper we introduce one of them. 
 The differential relation $\mathcal{R}\subset\rjet{X}$ is said to satisfy 
the \emph{one-parametric \textit{h}-principle}\/ 
if every family $\{f_t\}_{t\in[0,1]}$ of formal solutions to $\mathcal{R}$ 
between genuine solutions $f_0,\ f_1$ 
can be deformed inside $\secc{\mathcal{R}}$ 
to a family $\{\tilde{f}_t\}_{t\in[0,1]}$ of genuine solutions to $\mathcal{R}$ 
keeping both the ends $f_0,\ f_1$. 
 The notion is used to classifications. 

  In general, it is difficult to show 
if a differential relation satisfies the \emph{h}-principles or not. 
 For each relation, we should apply each suitable method. 
 We introduce one of such methods in the next section.

\subsection{Convex integration method}\label{sec:cvx_itgr}
  In this section, we introduce a method due to Gromov 
to show the \emph{h}-principles 
for certain differential relations, 
which is called the \emph{convex integration} method. 

  First, we recall ampleness of subsets. 
 Let $P$ be an affine space, and $\Omega\subset P$ a subset. 
 The minimum convex set that includes $\Omega$ 
is called the \emph{convex hull}\/ of $\Omega$. 
 The subset $\Omega\subset P$ is said to be \emph{ample}\/ 
if the convex hull of each path-connected component of $\Omega$ is equal to $P$.
 The empty set is also defined to be ample. 

  In order to define the ampleness of differential relations, 
we first introduce the principal directions for fibrations. 
 Let $p\colon X\to M$ be a fibration over an $n$-dimensional manifold $M$ 
with the fiber dimension~$q$. 
 For the $1$-jet $p^1\colon \ojet X\to M$, 
there exists the natural projection $p^1_0\colon\ojet{X}\to X^{(0)}=X$, 
which is an affine bundle. 
 Let 
\begin{equation*}
  E_x:=(p^1_0)^{-1}(x),\quad x\in X
\end{equation*}
denote the fiber of $p^1_0\colon \ojet X\to  X$ over $x\in X$, 
and $\vertt_x\subset T_xX$ the $q$-dimensional tangent space at $x\in X$ 
of the fiber $p^{-1}(p(x))\subset X$ of $p\colon X\to M$ over $p(x)\in V$. 
 Then the fiber $E_x\subset X$ can be identified with 
\begin{equation*}
  \operatorname{Hom}(T_{p(x)}V,\vertt_x)
  \cong\operatorname{Hom}(\mathbb{R}^n,\mathbb{R}^q)
  \cong\{q\times n\ \text{matrices}\} 
\end{equation*}
as follows. 
 As $\ojet{X}$ is the $1$-jet space, a point of $\ojet{X}$ is regarded 
as a non-vertical $n$-dimensional subspace $P_x\subset T_xX$, 
where ``non-vertical'' implies that $P_x$ is transverse 
to $\vertt_x\subset T_xX$. 
 Then, regarding $P_x\subset T_xX\cong T_{p(x)}V\oplus\vertt_x$ as a graph, 
there corresponds a linear mapping. 
 Then, for a fixed point $x\in X$, 
the fiber $E_x\subset\ojet{X}$ is identified with 
$\operatorname{Hom}(T_{p(x)}V, \vertt_x)$. 
 Now we define the principal directions for fibrations. 
 Suppose that a hyperplane $\tau\subset T_{p(x)}V$ 
and a linear mapping $l\colon \tau\to \vertt_x$ are given. 
 For these $\tau$, $l$, 
let $P^l_\tau\subset E_x$ denote the affine subspace of $E_x$ defined as 
\begin{equation*}
  P^l_\tau:=\{L\in\operatorname{Hom}(T_{p(x)}V, \vertt_x)\mid L|_\tau=l\}
  \subset E_x. 
\end{equation*}
 Such affine subspaces of $E_x$ are said to be \emph{principal}. 
 Note that the principal subspaces are parallel $q$-dimensional affine spaces 
if the hyperplane $\tau\subset T_{p(x)}V$ is fixed. 
 The direction of the principal subspaces $P^l_\tau$ determined by $\tau$ 
is called the \emph{principal direction}. 

  Now we define the ampleness of differential relations. 
 Let $p\colon X\to M$ be a fibration 
and $\mathcal{R}\subset\ojet{X}$ a differential relation. 
 A differential relation $\mathcal{R}$ is said to be \emph{ample}\/ 
if the intersection of $\mathcal{R}$ with any principal subspaces is ample 
in the affine fiber. 

  Then the key tool in this paper is the following theorem due to Gromov 
(see~\cite{gromov73}, \cite{gromov_pdr}, \cite{elmi_book}, \cite{madachi}, 
\cite{spring}). 

%
%
\begin{thrm}\label{thm:HP_apl_dr}
  Let $p\colon X\to M$ be a fibration and 
$\mathcal{R}\subset\ojet X$ an open differential relation. 
 If $\mathcal{R}\subset \ojet X$ is ample 
then $\mathcal{R}$ satisfies the \textit{h}-principle. 
 In addition, under this condition, $\mathcal{R}$ also satisfies 
the one-parametric \textit{h}-principle. 
\end{thrm}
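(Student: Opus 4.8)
The plan is to prove the theorem by Gromov's convex integration method, whose analytic core is a one-dimensional oscillation lemma; the global h-principle then follows by an induction over the principal directions combined with a patching argument over a cell decomposition of $M$. Throughout, openness of $\mathcal{R}$ is used to guarantee that sufficiently $C^0$-small perturbations of a genuine solution remain genuine solutions.

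First I would isolate the fundamental one-dimensional lemma, which is the heart of the argument. Fix a point of $X$ and a principal subspace $P^l_\tau\subset E_x$, and work in the corresponding affine fiber. Ampleness of $\mathcal{R}$ says precisely that each path-component of $\Omega:=\mathcal{R}\cap P^l_\tau$ has convex hull equal to the whole affine fiber, so that any prescribed formal value lies in $\cv{\Omega}$. Given the current section $f$ and a formal derivative $v(t)\in\cv{\Omega}$ along the distinguished direction, I would choose a one-periodic loop $h(s,\cdot)$ valued in a single path-component of $\Omega$ with average $\int_0^1 h(s,\tau)\,d\tau=v(s)$, and set $f_N(t)=f(0)+\int_0^t h(s,Ns)\,ds$. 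Then the genuine derivative $f_N'(t)=h(t,Nt)\in\Omega\subset\mathcal{R}$, while the Riemann–Lebesgue averaging estimate makes $f_N$ converge in $C^0$, as $N\to\infty$, to the section determined by the formal datum. Hence for large frequency $N$ we obtain a genuine solution $C^0$-close to the formal solution along this direction.

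Next I would assemble this into a local statement over a chart. Working in coordinates on a cube $I^n$, I would correct the derivatives one coordinate direction at a time: at the $k$-th stage I freeze the derivatives along the hyperplane $\tau$ spanned by the first $k-1$ directions, so that the restriction of $\mathcal{R}$ to the principal subspace $P^l_\tau$ is ample, and I apply the one-dimensional lemma in its fiberwise, parametrized form along the $k$-th direction. Iterating over all $n$ directions turns the formal solution into a holonomic one inside $\mathcal{R}$. To globalize, I would take a handle or simplicial decomposition of $M$, run the local procedure cell by cell, and patch the pieces using the $C^0$-smallness of each correction together with a relative version of the lemma that matches the boundary data already solved on lower cells; openness of $\mathcal{R}$ again keeps the patched section a genuine solution.

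For the one-parametric statement I would run the identical machinery with the parameter $t\in[0,1]$ adjoined as an extra coordinate, invoking the parametric form of the oscillation lemma and keeping the endpoints $f_0,f_1$ fixed throughout. The main obstacle I expect is not the oscillation construction itself but the bookkeeping of error accumulation across the inductive directions and across cells: each one-dimensional correction perturbs the derivatives fixed at earlier stages, so one must drive the frequencies to infinity while keeping the genuine $1$-jet uniformly inside the open set $\mathcal{R}$ and preserving the $C^0$-approximation. Controlling this interplay between high-frequency oscillation and the openness margin of $\mathcal{R}$ is the technical crux of the whole proof.
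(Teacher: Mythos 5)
First, a point of comparison: the paper does not prove this statement. Theorem~\ref{thm:HP_apl_dr} is Gromov's convex integration theorem, quoted as a black box with references to Gromov, Eliashberg--Mishachev, Spring, and M.~Adachi, so there is no internal proof to measure yours against. Your outline is the standard proof from those references --- a one-dimensional oscillation lemma, induction over the principal directions in a coordinate cube, globalization over a cell decomposition using openness of $\mathcal{R}$ together with the $C^0$-smallness of each correction, and the parametric variant obtained by adjoining $t$ as an extra coordinate --- and at the level of architecture it is the right argument.

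There is, however, one concrete slip in your one-dimensional lemma that would break the induction as written. You take the loop $h(s,\cdot)$, valued in a path component of $\Omega=\mathcal{R}\cap P^l_\tau$, to have average $v(s)$, the \emph{formal} derivative, so that $f_N$ converges to the primitive of $v$ rather than to the underlying section $f$. The correct normalization is the other way around: the formal derivative $v(s)$ already lies in $\Omega$ itself (a formal solution takes values in $\mathcal{R}$) and serves only to select the path component, while the loop must have average equal to the \emph{actual} partial derivative $\partial f/\partial x_k$ of the underlying section; it is precisely the ampleness hypothesis --- the convex hull of \emph{every} path component equals the whole principal subspace --- that guarantees $\partial f/\partial x_k$ lies in $\cv{\Omega_0}$ for the component $\Omega_0$ containing $v(s)$, whatever $\partial f/\partial x_k$ happens to be. With your normalization the new section is not $C^0$-close to $f$, so the corrections performed at later coordinate directions and on later cells destroy what was achieved earlier (the openness margin of $\mathcal{R}$ only protects against small perturbations of the full $1$-jet), the relative version that fixes boundary data on lower-dimensional cells fails, and the homotopy of formal solutions from the given one to the holonomic one is no longer controlled in its $0$-jet component. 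Once the average is taken to be $\partial f/\partial x_k$, the rest of your bookkeeping --- which you correctly identify as the technical crux --- goes through as in the cited references.
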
 

%
%
\begin{rmk}
  Gromov showed by this method 
that other flavors of the \textit{h}-principles also holds 
from the ampleness of open differential relations. 
(see~\cite{gromov73}, \cite{elmi_book})
\end{rmk}

  As we see in Section~\ref{sec:diff_rel}, 
singularity theory is a rich source of open differential relations. 
 Let $p\colon X\to M$ a fibration. 
 A singularity $\Sigma\subset\ojet{X}$ is said to be \emph{thin}\/ 
if, at any point $a\in\Sigma$, the intersection $P\cap \Sigma$ 
with any principal subspace $P$ through $a\in\Sigma$ 
is a stratified subset of dimension greater than or equal to $2$ in $P$. 
 When the singularity $\Sigma\subset\ojet{X}$ is thin, 
the open differential relation $\mathcal{R}:=\ojet{X}\setminus\Sigma$ is ample. 
 Then, as a corollary of Theorem~\ref{thm:HP_apl_dr}, we obtain the following. 
%
%
\begin{cor}
  Let $p\colon X\to M$ be a fibration. 
 If a singularity $\Sigma\subset\ojet{X}$ is thin, 
then the differential relation $\mathcal{R}:=\ojet{X}\setminus\Sigma$ 
satisfies the \textit{h}-principle. 
 In addition, $\mathcal{R}$ also satisfies 
the one-parametric \textit{h}-principle under the same condition. 
\end{cor}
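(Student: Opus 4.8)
The plan is to deduce the statement directly from Theorem~\ref{thm:HP_apl_dr}, which requires the differential relation to be both \emph{open} and \emph{ample}. Thus the whole task reduces to verifying these two properties for $\mathcal{R}:=\ojet{X}\setminus\Sigma$ and then invoking the theorem; since Theorem~\ref{thm:HP_apl_dr} yields the one-parametric \textit{h}-principle together with the ordinary one, the second assertion needs no separate argument. Openness is the easy half: a singularity $\Sigma\subset\ojet{X}$ is a closed stratified subset, so its complement $\mathcal{R}$ is open in $\ojet{X}$.

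The substantive step is ampleness. By definition $\mathcal{R}$ is ample provided that, for every principal subspace $P\subset E_x$, the slice $P\cap\mathcal{R}=P\setminus(P\cap\Sigma)$ is ample in the affine fiber. If $P$ misses $\Sigma$, then $P\cap\mathcal{R}=P$ is trivially ample; otherwise the thinness hypothesis guarantees that $P\cap\Sigma$ sits in $P$ as a stratified subset of codimension at least~$2$. I would then prove the key local lemma: the complement in an affine space of a stratified subset of codimension $\geq 2$ is both path-connected and dense. Path-connectedness follows by a general-position argument — a segment joining two points of $P\setminus(P\cap\Sigma)$ can be perturbed within $P$ to a path meeting each stratum transversally, and a generic $1$-dimensional path misses a set of codimension $\geq 2$. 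Density is immediate, since a subset of codimension $\geq 2$ is nowhere dense, and the convex hull of a dense subset is the entire affine space: any point lies in the interior of a small simplex whose vertices can be pushed into the dense set, exhibiting that point as a convex combination of points of the complement. Hence $P\cap\mathcal{R}$ is a single path-component with $\cv{P\cap\mathcal{R}}=P$, so it is ample.

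With $\mathcal{R}$ shown to be open and ample, Theorem~\ref{thm:HP_apl_dr} immediately gives that $\mathcal{R}$ satisfies the \textit{h}-principle and the one-parametric \textit{h}-principle. I expect the main obstacle to be the local lemma on complements of thin strata — precisely the transversality/general-position argument for path-connectedness of $P\cap\mathcal{R}$, which is exactly where the codimension-$\geq 2$ thinness hypothesis is indispensable; the density and convexity step, as well as the final appeal to Gromov's theorem, are then routine.
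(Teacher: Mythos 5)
Your proposal is correct and follows essentially the same route as the paper: the paper simply asserts that thinness of $\Sigma$ implies ampleness of $\mathcal{R}=\ojet{X}\setminus\Sigma$ and then invokes Theorem~\ref{thm:HP_apl_dr}, exactly as you do. You merely supply the standard general-position and density details (path-connectedness and full convex hull of the complement of a codimension-$\geq 2$ stratified subset of a principal subspace) that the paper leaves implicit, so there is nothing to object to.
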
 

  Next, we introduce the homotopy principle for differential sections 
for a linear differential operator $\mathcal{F}$, following~\cite{elmi_book}. 
 Let $p_x\colon X\to M$ and $p_z\colon Z\to M$ be vector bundles 
over a manifold $M$. 
 We first introduce an operator 
$\mathcal{F}\colon \sect{X}\to \sect Z$ 
called a first order linear differential operator as follows. 
 Let $F\colon\ojet{X}\to Z$ be a fiberwise homomorphism 
between vector bundles over $M$. 
 Then there corresponds a mapping $\tilde{F}\colon \sect{\ojet{X}}\to \sect Z$ 
as $\sigma\mapsto F\circ \sigma$ 
for a section $\sigma$ of the $1$-jet extension $(p_x)^1\colon\ojet{X}\to M$. 
 By the composition with the differential operator 
$J^1\colon\sect X\to\sect{\ojet{X}}$, 
we have the linear operator 
\begin{equation*}
  \mathcal{F}:=\tilde{F}\circ J^1\colon\sect X\to\sect Z. 
\end{equation*}
 The operators of this type are called 
the \emph{first order linear differential operators}. 
 The bundle homomorphism $F$ 
for a first order linear differential operator $\mathcal{F}$ 
is called the \emph{symbol}\/ of $\mathcal{F}$. 
 Let $\symb\mathcal{F}=F$ denote it. 

  For example, the exterior derivative of differential $p$-forms 
\begin{equation*}
  d\colon\sect{\bigwedge^pT^\ast M}\to \sect{\bigwedge^{p+1}T^\ast M}
\end{equation*}
is a first order linear differential operator. 
 Its symbol $D:=\symb d$ is a fiberwise epimorphism 
$D\colon \ojet{(\bigwedge^pT^\ast M)}\to \bigwedge^{p+1}T^\ast M$. 

  We now introduce the homotopy principle for differential sections. 
 Let $p_x\colon X\to M$ and $p_z\colon Z\to M$ be vector bundles 
over a manifold $M$, 
and $\mathcal{F}\colon \sect X\to \sect Z$ 
a first order linear differential operator. 
 An \emph{$\mathcal{F}$-section}\/ is, by definition, 
a section $s_z\colon M\to Z$ of the bundle $p_z\colon Z\to M$ 
for which there exists a section $s_x\colon M\to X$ of $p_x\colon X\to M$ 
that satisfies $s_z=\mathcal{F}(s_x)$. 
 For example, when $\mathcal{F}=d$ is the exterior derivative 
$d\colon \sect{\bigwedge^{p-1}T^\ast M}\to \sect{\bigwedge^pT^\ast M}$ 
of differential $(p-1)$-forms, 
the $\mathcal{F}$-sections are exact differential $p$-forms. 
 In order to state a property of differential sections, 
we introduce the following notation. 
 For the given subset $S\subset Z$, let $\secc_\mathcal{F}(Z\setminus S)$ denote 
the space of $\mathcal{F}$-sections $s\colon M\to Z$ of $p_z\colon Z\to M$ 
that satisfies $s(V)\subset Z\setminus S$. 
 Then the following theorem concerning the \textit{h}-principle 
for differential sections is introduced in~\cite{elmi_book}. 
%
%
\begin{thrm}\label{thm:HP_D-secs}
  Let $\mathcal{F}\colon \sect X\to \sect Z$ 
be a first order linear differential operator 
for vector bundles $p_x\colon X\to M$ and $p_z\colon Z\to M$ 
over a manifold $M$. 
 Assume that the symbol $F=\symb\mathcal{F}\colon\ojet{X}\to Z$ 
is fiberwise epimorphic. 
 For the given subset $S\subset Z$, set 
\begin{equation*}
  \Sigma:=F^{-1}(S)\subset\ojet{X}.
\end{equation*}
 If the differential relation $\mathcal{R}:=\ojet{X}\setminus\Sigma$ 
satisfies the \textit{h}-principle, 
then the inclusion mapping 
$\dsec{Z\setminus S}\hookrightarrow\sect{Z\setminus S}$ 
also satisfies the \textit{h}-principle. 
 In other words, any section $s_0\in\sect{Z\setminus S}$ is homotopic 
in $\sect{Z\setminus S}$ to a section $s_1\in\dsec{Z\setminus S}$. 
 In addition, the same claim also holds 
for the one-parametric \textit{h}-principle. 
\end{thrm}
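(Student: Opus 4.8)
The plan is to transport the \textit{h}-principle for $\mathcal{R}$ across the symbol map. Write $\tilde F\colon\sect{\ojet X}\to\sect Z$, $\sigma\mapsto F\circ\sigma$, for fiberwise postcomposition with $F=\symb\mathcal{F}$, so that $\mathcal{F}=\tilde F\circ J^1$. Two compatibilities drive everything. First, if $\sigma\in\secc{\mathcal{R}}$ is a formal solution then $\sigma(M)\subset\mathcal{R}=\ojet X\setminus F^{-1}(S)$, whence $F\circ\sigma$ avoids $S$; thus $\tilde F$ restricts to a map $\secc{\mathcal{R}}\to\sect{Z\setminus S}$. Second, if $s_x\in\operatorname{Sol}\mathcal{R}$ is a genuine solution then $\tilde F(J^1_{s_x})=\mathcal{F}(s_x)$ is an $\mathcal{F}$-section avoiding $S$, that is, an element of $\dsec{Z\setminus S}$; in fact $\dsec{Z\setminus S}=\mathcal{F}(\operatorname{Sol}\mathcal{R})$.

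The one place where the epimorphism hypothesis enters is lifting. Since $F$ is a fiberwise epimorphism of vector bundles over $M$, its kernel is a subbundle and a choice of metric yields a splitting $\ojet X\cong\ker F\oplus Z$, hence a right inverse $r$ to $F$ over $M$. Postcomposing a section with $r$ lifts any $s\in\sect{Z\setminus S}$ to a section $\sigma$ of $\ojet X$ with $F\circ\sigma=s$; because $s$ avoids $S$, the lift $\sigma$ takes values in $\mathcal{R}$, so $\sigma\in\secc{\mathcal{R}}$. Thus $\tilde F\colon\secc{\mathcal{R}}\to\sect{Z\setminus S}$ is onto.

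With these facts the existence statement is immediate. Given $s_0\in\sect{Z\setminus S}$, lift it to a formal solution $\sigma_0\in\secc{\mathcal{R}}$. By the assumed \textit{h}-principle for $\mathcal{R}$ there is a homotopy $\{\sigma_t\}_{t\in[0,1]}$ inside $\secc{\mathcal{R}}$ from $\sigma_0$ to a holonomic section $\sigma_1=J^1_{s_x}$ with $s_x\in\operatorname{Sol}\mathcal{R}$. Applying $\tilde F$ produces the path $\{F\circ\sigma_t\}$, which by the first compatibility lies entirely in $\sect{Z\setminus S}$ and runs from $s_0$ to $\mathcal{F}(s_x)\in\dsec{Z\setminus S}$. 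This is exactly the required homotopy.

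For the one-parametric statement I would run the same argument on families, the only extra care being the lift. Given a family $\{s_t\}$ in $\sect{Z\setminus S}$ whose ends are $\mathcal{F}$-sections $s_i=\mathcal{F}(s_x^i)$, $i=0,1$, I first lift the whole family to $\{\sigma_t\}\subset\secc{\mathcal{R}}$ with the prescribed holonomic ends $\sigma_0=J^1_{s_x^0}$, $\sigma_1=J^1_{s_x^1}$; this is possible because the fibers of $F$ are affine, so any two lifts over a point differ by a section of $\ker F$, and one corrects the naive lift $r\circ s_t$ by the linear interpolation $(1-t)\delta_0+t\delta_1$ of the endpoint discrepancies $\delta_i\in\sect{\ker F}$ without changing $F\circ\sigma_t=s_t$. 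The one-parametric \textit{h}-principle for $\mathcal{R}$ then deforms $\{\sigma_t\}$ rel endpoints to a family of genuine solutions, and pushing the deformation through $\tilde F$ keeps everything in $\sect{Z\setminus S}$ while fixing the ends. The main obstacle is thus not the \textit{h}-principle itself, which is imported wholesale from $\mathcal{R}$, but checking that the symbol admits the needed family of lifts and that postcomposition with $F$ never leaves $Z\setminus S$; both reduce to the defining relation $\Sigma=F^{-1}(S)$ together with fiberwise surjectivity of $F$.
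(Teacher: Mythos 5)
Your argument is correct, but note that the paper does not prove this statement at all: it is imported verbatim from the reference \cite{elmi_book} (``the following theorem \dots is introduced in''), so there is no in-paper proof to compare against. What you wrote is essentially the standard argument from that reference: fiberwise surjectivity of the symbol gives a bundle splitting $\ojet{X}\cong\ker F\oplus Z$ and hence a right inverse used to lift sections of $Z\setminus S$ to formal solutions of $\mathcal{R}$, the h-principle is applied upstairs, and the result is pushed back down by $\tilde F$; your correction of the naive lift by interpolating the endpoint discrepancies in $\sect{\ker F}$ is exactly the right device to make the parametric case work rel endpoints. I see no gap.
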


\section{The \textit{h}-principles for distributions of type~$(3,5)$}
\label{sec:(3,5)-distributions}
  In this section, we discuss existence and classification 
of tangent distributions of type~$(3,5)$. 
 Although more general cases of such problems are discussed in~\cite{art24}, 
we restrict ourselves to the case of type~$(3,5)$, 
to keep this paper self-contained. 
 As we have mentioned in Section~\ref{sec:intro}, 
if $\D$ is the Cartan~$(2,3,5)$-distribution, 
then the derived distribution $\mathcal{E}=[\D,\D]$ is a $(3,5)$-distribution. 
 Then the discussion in this section is important for the proofs 
of the main theorems in Section~\ref{sec:pf_ethm}. 

  We apply Gromov's convex integration method introduced 
in the previous section.
 In Subsection~\ref{sec:claim(3,5)}, 
we introduce the existence and classification claims, 
Theorem~\ref{thm:thm_(3,5)}, for $(3,5)$-distributions. 
 The statement is reformulated into the key theorem 
in terms of the \textit{h}-principles. 
 Then we discuss the differential relation relevant to the key theorem 
in Subsection~\ref{sec:key_(3,5)}. 
 Applying Gromov's convex integration method, 
we prove Theorem~\ref{thm:key_(3,5)}, and then Theorem~\ref{thm:thm_(3,5)} 
in Subsection~\ref{sec:proof_(3,5)}.

\subsection{Claims for $(3,5)$-distributions}\label{sec:claim(3,5)}
  First, we define a formal structure for $(3,5)$-distribution. 
 Like almost Cartan structure to the Cartan~$(2,3,5)$-distribution, 
an almost $(3,5)$-distribution is defined as follows. 
%
%
\begin{df}\label{def:alm_(3,5)}
  An \emph{almost $(3,5)$-distribution\/} on a $5$-dimensional manifold $M$ 
is defined as a tuple $(\D,\{\omega_1,\omega_2\})$, 
where $\D\subset TM$ is a tangent distribution of rank~$3$ on $M$ 
and $\{\omega_1,\omega_2\}$ is a pair of $2$-forms 
that are pointwise linearly independent on $\D$. 
\end{df}
\noindent
 Note that in a precise sense, the $2$-forms take values in $TM/\D$, 
due to orientations. 
 We remark that a ``genuine'' $(3,5)$-distribution 
is an almost $(3,5)$-distribution, in some sense. 
 Indeed, if $\D\subset TM$ is a genuine $(3,5)$-distribution 
on a $5$-dimensional manifold $M$, 
from Proposition~\ref{prop:dbasis}, 
there locally exists a pair $(\alpha_1,\ \alpha_2)$ of $1$-forms 
that defines $\D$ as $\D=\{\alpha_1=0,\ \alpha_2=0\}$ satisfying 
that the following $4$-forms are pointwise linearly independent: 
\begin{equation*}
  \alpha_1\wedge\alpha_2\wedge d\alpha_1,\qquad 
  \alpha_1\wedge\alpha_2\wedge d\alpha_2. 
\end{equation*}
 This implies the $2$-forms $d\alpha_1$ and $d\alpha_2$ determine 
two $(TM/\D)$-valued $2$-forms $[d\alpha_1], [d\alpha_2]$ 
that  make $\D$ an almost $(3,5)$-distribution 
$(\D,\{[d\alpha_1],[d\alpha_2]\})$. 
 We also should remark that the distribution $\mathcal{E}$ of rank~$3$ 
in the definition of the almost Cartan structure 
(see Definition~\ref{def:alm_cartan}) is an almost $(3,5)$-distribution. 

  Then the existence and classification results for $(3,5)$-distributions 
are described as follows. 
%
%
\begin{thrm}\label{thm:thm_(3,5)}
  Let $M$ be a possibly closed $5$-dimensional manifold. 

\noindent 
\textup{(1)}\  The manifold $M$ admits a $(3,5)$-distribution 
  if and only if it admits an almost $(3,5)$-distribution. 

\noindent 
\textup{(2)}\  Let $\mathcal{D}_0,\ \mathcal{D}_1\subset TM$ 
be $(3,5)$-distributions on a $5$-dimensional manifold $M$. 
 If the associated almost $(3,5)$-distributions of $\D_0$ and $\D_1$ 
are homotopic through almost $(3,5)$-distributions, 
then $\D_0$ and $\D_1$ are homotopic through genuine $(3,5)$-distributions. 
\end{thrm}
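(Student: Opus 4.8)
The plan is to prove both parts of Theorem~\ref{thm:thm_(3,5)} at once, by exhibiting genuine $(3,5)$-distributions and almost $(3,5)$-distributions as the genuine and the formal solutions of one open differential relation, and then applying Gromov's convex integration theorem. First I would describe a rank-$3$ distribution $\D$ by a local coframe $(\alpha_1,\alpha_2)$ of its annihilator $\mathcal{S}(\D)\subset T^{\ast}M$, so that, by Proposition~\ref{prop:dbasis}, $\D$ is of type~$(3,5)$ exactly when the two $4$-forms $\alpha_1\wedge\alpha_2\wedge d\alpha_1$ and $\alpha_1\wedge\alpha_2\wedge d\alpha_2$ are pointwise linearly independent. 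Accordingly I take $X:=(T^{\ast}M)^{2}$ and the first order linear differential operator $\mathcal{F}\colon\sect X\to\sect Z$, $\mathcal{F}(\alpha_1,\alpha_2)=(\alpha_1,\alpha_2,d\alpha_1,d\alpha_2)$, into $Z=(T^{\ast}M)^{2}\oplus(\bigwedge^{2}T^{\ast}M)^{2}$, whose symbol is fiberwise epimorphic, and let $S\subset Z$ be the closed set where $\alpha_1,\alpha_2$ are dependent or the $4$-forms $\alpha_1\wedge\alpha_2\wedge\omega_i$ are dependent. Then the $\mathcal{F}$-sections of $Z\setminus S$ are the genuine $(3,5)$-distributions and the sections of $Z\setminus S$ are the almost $(3,5)$-distributions of Definition~\ref{def:alm_(3,5)}. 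Because a change of coframe $\alpha_i\mapsto g_{ij}\alpha_j$ multiplies the pair of $4$-forms by the invertible matrix $\det(g)\,g$ (the correction terms $dg_{ij}\wedge\alpha_j$ die after wedging with $\alpha_1\wedge\alpha_2$), the independence condition descends to a well-defined open relation on the fibration of rank-$3$ tangent distributions, so the construction needs no global trivialization of $\mathcal{S}(\D)$.

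By Theorem~\ref{thm:HP_D-secs} it is enough to prove that the associated relation $\mathcal{R}:=\ojet X\setminus\Sigma$, with $\Sigma:=(\symb{\mathcal{F}})^{-1}(S)\subset\ojet X$, satisfies the \textit{h}-principle; and by Theorem~\ref{thm:HP_apl_dr} this follows once $\mathcal{R}$ is open, which is clear, and ample. So the whole problem reduces to the ampleness of $\mathcal{R}$, and this is the heart of the matter. To verify it I would intersect $\mathcal{R}$ with an arbitrary principal subspace $P$. Fix a point $p$, a hyperplane $\tau\subset T_{p}M$ with conormal $\beta$, and the $1$-jet of $(\alpha_1,\alpha_2)$ along $\tau$; the free parameter in the principal direction alters each $d\alpha_i$ by $\beta\wedge\eta_i$ with $\eta_i$ ranging over $\tau^{\ast}$, so the pair of $4$-forms becomes $(c_i+\alpha_1\wedge\alpha_2\wedge\beta\wedge\eta_i)_{i=1,2}$, where $c_i=\alpha_1\wedge\alpha_2\wedge d\alpha_i^{0}$ is fixed and $W:=\{\alpha_1\wedge\alpha_2\wedge\beta\wedge\eta\}$ has dimension at most two.

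Three regimes occur. If $\beta\in\spn{\alpha_1,\alpha_2}$ then $W=0$, the condition is constant on the subspace, and $\mathcal{R}\cap P$ is either empty or all of $P$, hence ample. If $\beta\notin\spn{\alpha_1,\alpha_2}$ and $c_1,c_2$ do not both lie in $W$, the locus where the two $4$-forms are dependent has codimension at least $2$, so its complement is connected with full convex hull. The delicate regime is $c_1,c_2\in W$: there both $4$-forms are confined to the plane $W\cong\R^{2}$, and $\mathcal{R}\cap P$ restricts to the condition $\det[\Phi_1\,|\,\Phi_2]\neq0$ on $W\times W\cong M_2(\R)$, a singularity of codimension only one.

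This determinantal regime is the step I expect to be the main obstacle: thinness fails, so ampleness can no longer be deduced from connectedness of the complement. Here I would invoke the classical fact that each of the two components of $GL_2(\R)\subset M_2(\R)$, namely $\{\det>0\}$ and $\{\det<0\}$, has convex hull equal to all of $M_2(\R)$. Since the remaining (kernel and $\beta$-) directions enter $\mathcal{R}\cap P$ as a free affine factor, $\mathcal{R}\cap P$ is the product of such a component with an affine subspace, and the convex hull of each of its path-components is the entire principal subspace. Together the three regimes give the ampleness of $\mathcal{R}$. Theorem~\ref{thm:HP_apl_dr} then supplies both the \textit{h}-principle and its one-parametric version for $\mathcal{R}$, and transporting these back through Theorem~\ref{thm:HP_D-secs} yields the two assertions: part~(1), that every almost $(3,5)$-distribution is homotopic through almost $(3,5)$-distributions to a genuine one, and part~(2), that a homotopy of almost $(3,5)$-distributions joining two genuine ones can be deformed rel endpoints to a homotopy through genuine $(3,5)$-distributions.
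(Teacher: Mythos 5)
Your proposal is correct, and its global architecture coincides with the paper's: the same bundles $X_1=\bigoplus^2T^\ast M$ and $Z_1=\bigoplus^2\bigl(T^\ast M\oplus\bigwedge^2T^\ast M\bigr)$, the same operator $(\alpha_1,\alpha_2)\mapsto(\alpha_1,\alpha_2,d\alpha_1,d\alpha_2)$, the same singular locus, and the same two-step reduction through Theorem~\ref{thm:HP_D-secs} and Theorem~\ref{thm:HP_apl_dr}. Where you genuinely diverge is in the proof of ampleness, i.e.\ Proposition~\ref{prop:essence_(3,5)}. The paper fixes coordinates, splits on whether the constants $(B^1_1,B^2_1)$ vanish, disposes of the first case by a rank estimate on an explicit $4\times 8$ coefficient matrix, and in the second case diagonalizes one quadric $F_{jk}$ to signature $(2,2,0,\dots,0)$ and verifies by hand (cone, hyperboloid, paraboloid cases) that each component of its complement has full convex hull. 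You instead run an invariant trichotomy on the position of the fixed parts $c_i=\alpha_1\wedge\alpha_2\wedge d\alpha_i^0$ relative to the two-plane $W=\{\alpha_1\wedge\alpha_2\wedge\beta\wedge\eta\}$: the first two regimes give an empty, full, or codimension-two singularity, and the only non-thin stratum is the determinantal hypersurface $\{\det=0\}\subset M_2(\R)$, handled by $\cv{GL_2^{\pm}(\R)}=M_2(\R)$ times a free affine factor. This is the same computation in different clothes --- the determinant on $M_2(\R)$ is precisely a quadratic form of signature $(2,2)$, which is the paper's $F_{23}$ --- but your version is cleaner, explains where the $(2,2)$ signature comes from, and dispenses with the normal-form case analysis; the price is that your case split does not literally match the paper's (the paper's Case~2 also contains configurations that your regime~2 absorbs), though both decompositions are exhaustive, and it treats an arbitrary principal direction $\beta$ rather than the single coordinate direction the paper uses. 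Two small points to make explicit in a write-up: regime~1 should be stated as $\alpha_1\wedge\alpha_2\wedge\beta=0$ (rather than $\beta\in\spn{\alpha_1,\alpha_2}$) so that the fibers where $\alpha_1\wedge\alpha_2=0$ are covered, and the passage from pairs of $1$-forms to honest rank-$3$ distributions without a global trivialization of $\mathcal{S}(\D)$ deserves the justification you sketch, since the paper itself only gestures at it.
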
 
\noindent 
 This section is devoted to show this theorem. 

  For the proof of Theorem~\ref{thm:thm_(3,5)}, 
we reformulate the problem in terms of the \textit{h}-principle. 
 In order to describe the claim, we define the set of ``formal'' structures 
compared with that of ``genuine'' $(3,5)$-distributions. 
 Let $M$ be a possibly closed $5$-dimensional manifold. 
 Let $\Omega_{(3,5)}(M)$ denote a set of genuine $(3,5)$-distributions on $M$, 
and $\almtf$ a set of almost $(3,5)$-distributions on $M$. 
 As we mentioned after Definition~\ref{def:alm_(3,5)}, 
associated with any genuine $(3,5)$-distribution. 
there exists an almost $(3,5)$-distribution. 
 Then setting $\bar{\Omega}_{(3,5)}(M)$ 
as the set of almost $(3,5)$-distributions
that are associated with genuine $(3,5)$-distributions, 
we have $\bar{\Omega}_{(3,5)}(M)\subset\almtf$. 
 Using the notion, the key theorem for the proofs 
of Theorem~\ref{thm:thm_(3,5)} is formulated as follows. 

%
%
\begin{thrm}\label{thm:key_(3,5)}
Let $M$ be a possibly closed $5$-dimensional manifold. 

\noindent 
\textup{(1)}\  Let $(\D,\{\omega_1,\omega_2\})\in\almtf$ 
  be an almost $(3,5)$-distribution on $M$. 
  Then $(\D,\{\omega_1,\omega_2\})$ is homotopic in $\almtf$ 
  to an almost $(3,5)$-distribution associated with 
  a genuine $(3,5)$-distribution $\mathcal{E}\in\Omega_{(3,5)}(M)$ on $M$.

\noindent 
\textup{(2)}\  Let $\D_0,\D_1\in\Omega_{(3,5)}(M)$
  be genuine $(3,5)$-distributions on $M$, 
  and let $(\D_t,\{\omega_1^t,\omega_2^t\})\in\almtf$, $t\in[0,1]$, 
  be a path of almost $(3,5)$-distributions in $\almtf$ 
  between the associated almost $(3,5)$-distributions of $\D_0$ and $\D_1$. 
  Then $(\D_t,\{\omega_1^t,\omega_2^t\})$ can be deformed in $\almtf$ 
  to a path $(\tilde{\D}_t,\{[d\tilde{\alpha}_1^t],[d\tilde{\alpha}_2^t]\})
  \in\bar{\Omega}_{(3,5)}(M)$ 
  of almost $(3,5)$-distributions associated with genuine $(3,5)$-distributions 
  $\tilde{\D}_t\in\Omega_{(3,5)}(M)$, 
  while keeping the endpoints fixed: 
  $(\D_i,\{[d\alpha_1^i],[d\alpha_2^i]\})
    =(\tilde{\D}_i,\{[d\tilde{\alpha}_1^i],[d\tilde{\alpha}_2^i]\})$, $i=0,1$. 
\end{thrm}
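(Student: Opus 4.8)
The plan is to deduce Theorem~\ref{thm:key_(3,5)} from Gromov's convex integration, exactly as packaged in Theorems~\ref{thm:HP_apl_dr} and~\ref{thm:HP_D-secs}. First I would encode a $(3,5)$-distribution through its annihilator: a rank-$3$ distribution $\mathcal{E}$ is the same datum as the rank-$2$ subbundle $\mathcal{S}(\mathcal{E})\subset T^{\ast}M$, and by Proposition~\ref{prop:dbasis} the type-$(3,5)$ condition is the \emph{open} requirement that, for a local frame $\{\alpha_1,\alpha_2\}$ of $\mathcal{S}(\mathcal{E})$, the two $4$-forms $\alpha_1\wedge\alpha_2\wedge d\alpha_1$ and $\alpha_1\wedge\alpha_2\wedge d\alpha_2$ be pointwise linearly independent. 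Thus the unknown is the subbundle itself — a section of the Grassmann fibration $p\colon X\to M$ of rank-$2$ subbundles of $T^{\ast}M$ — and the genericity is captured by an open differential relation $\mathcal{R}\subset\ojet{X}$. A formal solution of $\mathcal{R}$ is precisely a rank-$3$ distribution together with a formal first-order datum, i.e.\ the pair of $2$-forms of Definition~\ref{def:alm_(3,5)}, so $\secc{\mathcal{R}}\simeq\almtf$ and the holonomic (genuine) solutions are exactly $\Omega_{(3,5)}(M)$. Part~(1) of Theorem~\ref{thm:key_(3,5)} is then the ordinary \textit{h}-principle for $\mathcal{R}$, and part~(2) is its one-parametric version, both furnished by Theorem~\ref{thm:HP_apl_dr} once $\mathcal{R}$ is shown to be ample.

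I note at the outset why the plane field \emph{must} be allowed to vary: the bracket-generating information sits in the part of $d\alpha_i$ transverse to the ideal generated by $\alpha_1,\alpha_2$, which is read off by $\alpha_1\wedge\alpha_2\wedge d\alpha_i$, and scaling $\alpha_i$ within a fixed $\mathcal{S}$ leaves this part untouched at top order, so the relevant first-order motion is the motion of $\mathcal{S}$ itself. This is consistent with the statements of Theorem~\ref{thm:key_(3,5)}, in which $\D$ is deformed along with the forms. For the fibrewise (hence local) ampleness computation I would trivialize $\mathcal{S}$ by a pair of $1$-forms and use the model $\mathcal{F}=(d,d)$ sending $(\alpha_1,\alpha_2)$ to $(d\alpha_1,d\alpha_2)$; since $\symb{d}$ is fibrewise epimorphic onto $\Lambda^2 T^{\ast}M$ (as recorded before Theorem~\ref{thm:HP_D-secs}), the restriction of the symbol to any principal subspace lets $d\alpha_1,d\alpha_2$ range freely over affine lines of the form $d\alpha_i^{0}+\nu\wedge(\,\cdot\,)$, where $\nu$ is a conormal to the chosen hyperplane. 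The differential-sections formulation of Theorem~\ref{thm:HP_D-secs} then lets me pass between the $2$-form picture and the relation $\mathcal{R}$.

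The crux — and the step I expect to be the main obstacle — is verifying ampleness, equivalently that the singularity $\Sigma=\ojet{X}\setminus\mathcal{R}$ is \emph{thin}. At a jet whose $0$-jet is fixed, the map into $\Lambda^4 T^{\ast}_xM\cong\R^5$ sends $(d\alpha_1,d\alpha_2)$ to the pair $\bigl(\alpha_1\wedge\alpha_2\wedge d\alpha_1,\ \alpha_1\wedge\alpha_2\wedge d\alpha_2\bigr)$, and $\Sigma$ is the locus where this pair is linearly dependent, i.e.\ a rank-$\le 1$ (determinantal) condition on a $2\times 5$ array. In the full pair-of-forms space this locus has codimension $4$, but I must check that its intersection with every principal subspace of $\mathcal{R}$ still has codimension $\ge 2$ and is suitably stratified. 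This amounts to showing that, for a generic conormal $\nu$, the accessible variation directions $\alpha_1\wedge\alpha_2\wedge(\nu\wedge\,\cdot\,)$ project onto a $2$-dimensional subspace of the relevant $\Lambda^2(TM/\D)$-type summand, so that moving the normal derivatives of $\alpha_1$ and of $\alpha_2$ cannot keep the two $4$-forms proportional except along a set of codimension $\ge 2$. This is a finite linear-algebra check in $\Lambda^2$ and $\Lambda^4$ of a $5$-dimensional space, but it is where the whole argument lives; care is needed because $\alpha_1$ and $\alpha_2$ share the same principal direction, so the two normal-derivative families are not a priori independent and the stratification of the degenerate locus must be examined directly.

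Finally I would assemble the conclusion. Once $\mathcal{R}$ is ample, Theorem~\ref{thm:HP_apl_dr} yields both the \textit{h}-principle and the one-parametric \textit{h}-principle for $\mathcal{R}$; unwinding the dictionary $\secc{\mathcal{R}}\simeq\almtf$ and $\operatorname{Sol}\mathcal{R}\simeq\Omega_{(3,5)}(M)$ gives part~(1) directly, while the parametric statement gives part~(2): a path of almost $(3,5)$-distributions joining the formal data of two genuine ones is a homotopy of formal solutions, which the one-parametric principle deforms — keeping the already holonomic ends $\D_0,\D_1$ fixed — into a path of genuine $(3,5)$-distributions $\tilde{\D}_t$. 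Globality is not an issue, since convex integration applies to sections of the arbitrary fibration $p\colon X\to M$, so the possibly nontrivial annihilator bundle and the possibly closed $M$ are handled automatically, the $1$-form trivialization entering only in the local ampleness verification.
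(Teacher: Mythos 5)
Your overall architecture is the same as the paper's: encode the unknown by a (local) pair of $1$-forms, i.e.\ work with the bundle $X_1=\bigoplus^2T^\ast M$ and the operator $\mathcal{F}_1\colon(\alpha_1,\alpha_2)\mapsto(\alpha_1,\alpha_2,d\alpha_1,d\alpha_2)$, define the open relation $\tilde{\mathcal{R}}\subset\ojet{X_1}$ as the complement of the locus where $\alpha_1\wedge\alpha_2\wedge d\alpha_1$ and $\alpha_1\wedge\alpha_2\wedge d\alpha_2$ become dependent, prove ampleness, and then combine Theorem~\ref{thm:HP_apl_dr} with Theorem~\ref{thm:HP_D-secs} to translate the (one-parametric) \textit{h}-principle into statements about $\almtf$ and $\Omega_{(3,5)}(M)$. (One small caution: your opening Grassmannian formulation does not feed directly into Theorem~\ref{thm:HP_D-secs}, which requires vector bundles and a linear operator; you do retreat to the coframe picture, which is what the paper uses, and a formal solution of $\tilde{\mathcal{R}}$ records a full formal $1$-jet rather than just the pair of $2$-forms --- it is exactly Theorem~\ref{thm:HP_D-secs} that bridges that discrepancy, so keep that step explicit.)

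The genuine gap is the ampleness verification, which you correctly identify as the crux but then both omit and misframe. You propose to prove ampleness ``equivalently'' by showing the singularity is thin, i.e.\ that $\tilde\Sigma\cap P$ has codimension at least $2$ in every principal subspace, via a generic-conormal linear-algebra argument. Thinness is sufficient but not necessary for ampleness, and in this problem it is not what happens on the degenerate strata. Writing the dependency condition on a principal subspace $P_1$ as the vanishing of the $2\times 2$ minors $F_{ij}$ of the array $(B^i_r)$, the paper's analysis splits according to whether the constant pair $(B^1_1,B^2_1)$ vanishes. When it does not, the conditions are linear and one shows the coefficient matrix has rank $0$ or rank $\ge 2$ (rank $0$ forces $\tilde{\mathcal{R}}\cap P_1=\emptyset$, ample only by the empty-set convention --- already outside your codimension-$\ge2$ picture). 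When $(B^1_1,B^2_1)=(0,0)$, the surviving conditions $F_{ij}=0$ are genuine quadrics in the eight fibre variables, and ampleness is established not by a codimension count but by reducing one such quadric to its normal form --- signature $(+,+,-,-)$ plus a four-dimensional kernel, in central and non-central variants --- and checking by an explicit segment construction that \emph{each of the two components} of the complement of such a hypersurface (which has codimension $1$, hence is not thin) has convex hull equal to all of $P_1\cong\R^8$. A ``generic $\nu$'' argument cannot close this, since ampleness must hold for every principal subspace over every point of the base stratification, including exactly these degenerate ones; your own caveat that the two normal-derivative families interact is where the quadratic terms come from, and resolving it requires the case analysis and the convex-hull computation for indefinite quadrics, none of which is supplied.
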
 
\noindent
 It is clear that Theorem~\ref{thm:thm_(3,5)} 
follows from Theorem~\ref{thm:key_(3,5)}. 
\begin{proof}[Proof of Theorem~\textup{\ref{thm:thm_(3,5)}}]
  (1)\ As was noted after Definition~\ref{def:alm_(3,5)}, 
  to each ``genuine'' $(3,5)$-distribution 
  there exists an almost $(3,5)$-distribution associated with it. 
  Then it remains to prove the converse. 
  Suppose that there exists an almost $(3,5)$-distribution 
  $(\D,\{\omega_1,\omega_2\})$ on a $5$-dimensional manifold $M$. 
  Then, by Theorem~\ref{thm:key_(3,5)}, 
  $(\D,\{\omega_1,\omega_2\})$ is homotopic in $\almtf$ 
  to an almost $(3,5)$-distribution 
  associated with a genuine $(3,5)$-distribution $\tilde{\D}$. 
  That is, there exists a genuine $(3,5)$-distribution $\tilde{\D}$ 
  on the manifold $M$. \\
  (2)\ Let $\D_0,\ \D_1\in\Omega_{(3,5)}(M)$ be genuine $(3,5)$-distributions. 
  Suppose that there exists a homotopy 
  $(\D_t,\{\omega_1^t,\omega_2^t\})\in\almtf$, $t\in[0,1]$, 
  between two almost $(3,5)$-distributions 
  $(\D_i,\{\omega_1^i,\omega_2^i\})=(\D_i,\{[d\alpha_1^i],[d\alpha_2^i]\})
  \in\bar{\Omega}_{(3,5)}(M)$  
  associated with the given genuine $(3,5)$-distribution 
  $\D_i$, for $i=0,1$. 
  Then, by Theorem~\ref{thm:key_(3,5)}, there exists a path 
  $(\tilde{\D}_t,\{[d\tilde{\alpha}_1^t],[d\tilde{\alpha}_2^t]\})
  \in\bar{\Omega}_{(3,5)}(M)$ 
  of almost $(3,5)$-distributions associated with genuine $(3,5)$-distributions 
  $\tilde{\D}_t\in\Omega_{(3,5)}(M)$, where $\tilde{\D}_i=\D_i$, $i=0,1$. 
  That is, there exists a homotopy 
  $\tilde{\D}_t\in\Omega_{(3,5)}(M)$, $t\in[0,1]$, 
  of genuine $(3,5)$-distributions 
  between $\D_0=\tilde{\D}_0$ and $\D_1=\tilde{\D}_1$. 
\end{proof}

  Theorem~\ref{thm:key_(3,5)} is proved 
in Subsection~\ref{sec:proof_(3,5)}. 
 It is proved 
from the viewpoint of the \textit{h}-principles 
introduced in Section~\ref{sec:h-prin}. 
 Gromov's convex integration method is applied. 
 For the method, we will determine the relevant differential relation 
and show its ampleness in the next subsection. 

\subsection{Differential relation and its ampleness}\label{sec:key_(3,5)}
 In this subsection, we determine the relevant differential relation 
and show its ampleness. 

  In order to deal with tangent distributions, 
we represent them by their coframings. 
 In addition, almost $(3,5)$-distribution is defined as a certain tuple 
of a tangent distribution and two $2$-forms. 
 To describe them, we first introduce 
two vector bundles and a first-order differential operator. 
 Let $M$ be a $5$-dimensional manifold, 
and $\D$ a tangent distribution of rank~$3$ on $M$. 
 In other words, it is of corank~$2$. 
 Then it is locally regarded as a kernel of two $1$-forms. 
 For the description, let $X_1\to M$ be the vector bundle over $M$ defined as 
\begin{equation*}\label{eq:Xbdl_(3,5)}
    X_1:=\bigoplus^2 T^\ast M\to M. 
\end{equation*}
 Then a coframing of the distribution $\D$ is locally regarded 
as a section of this bundle $X_1$. 
 In addition, we recognize almost $(3,5)$-distributions 
in a similar manner. 
 From Definition~\ref{def:alm_cartan}, an almost $(3,5)$-distribution 
is defined as a triple that consists of a tangent distribution of rank~$3$ 
and two $2$-forms.
 Then it is regarded as a pair of $1$-forms and a pair of $2$-forms. 
 For the description, let $Z_1\to M$ be the vector bundle over $M$ defined as 
\begin{equation*}\label{eq:Zbdl_(3,5)}
  Z_1:=\bigoplus^2\lft( T^\ast M\oplus\bigwedge^2 T^\ast M\rgt)\to M. 
\end{equation*}
 Then a tuple that consists of a tangent distribution of corank~$2$ 
and two $2$-forms is regarded as a section of this bundle $Z_1$. 
 Next, we define a first-order linear differential operator 
connecting two bundles introduced above. 
 Let $\mathcal{F}_1\colon\sect{X_1}\to\sect{Z_1}$ 
be the first order linear differential operator defined as  
\begin{equation}\label{eq:def_f_(3,5)}
  \mathcal{F}_1\colon
  (\alpha_1,\ \alpha_2)\mapsto 
  (\alpha_1,\ \alpha_2,\ d\alpha_1,\ d\alpha_2). 
\end{equation}
 We remark that the symbol $F_1\colon X_1^{(1)}\to Z_1$ 
of $\mathcal{F}_1$ is fiberwise epimorphic. 
 Then we can apply Theorem~\ref{thm:HP_D-secs} to this operator. 

  Using these descriptions, we formulate the problem as follows. 
 The hypotheses of Theorem~\ref{thm:key_(3,5)} are concerned 
with formal structures, that is, almost $(3,5)$-distributions. 
 Then we find the singular locus in $Z_1$, and by Theorem~\ref{thm:HP_D-secs}, 
we determine the differential relation to consider in the $1$-jet space 
$\ojet{(X_1)}$. 

  Now, we define the singular locus in $Z_1$, the target side 
of $\mathcal{F}_1$. 
 Let $\tilde{S}\subset Z_1$ be the subset defined as
\begin{align}\label{eq:def_s_(3,5)}
  \tilde{S}:=&\lft\{(\alpha_1,\alpha_2,\omega_1,\omega_2)_p
         \in\bigoplus^2\lft(T^\ast M\oplus\bigwedge^2T^\ast M\rgt)=Z_1 \rgt. \\
       &\qquad \Big|\; \left. \vphantom{\bigoplus^2}
         (\alpha_1\wedge\alpha_2\wedge\omega_1)_p\quad \text{and}\quad
         (\alpha_1\wedge\alpha_2\wedge\omega_2)_p 
         \quad \text{are linearly dependent}\rgt\}. \notag
\end{align}
 It is defined in the fiber $(Z_1)_p$ by the condition that 
$(\alpha_1\wedge\alpha_2\wedge\omega_1)_p$ and 
$(\alpha_1\wedge\alpha_2\wedge\omega_2)_p$ are linearly dependent. 
 In other words, it is where the defining condition 
of almost $(3,5)$-distribution does not hold. 

 Then we take the inverse image of $\tilde{S}\subset Z_1$ 
by the symbol $F_1\colon X_1^{(1)}\to Z_1$ 
of $\mathcal{F}_1\colon \sect {X_1}\to \sect{Z_1}$ 
(see Section~\ref{sec:h-prin} for definition). 
 Set 
\begin{equation}\label{eq:def_sgm1}
  \tilde{\Sigma}:=(F_1)^{-1}(\tilde{S})\subset X_1^{(1)}. 
\end{equation}
 Then, since the subset $\tilde{S}\subset Z_1$ corresponds to 
where the defining condition of almost $(3,5)$-distribution does not hold,
the subset $\tilde{\Sigma}=(F_1)^{-1}(\tilde{S})\subset X_1^{(1)}$ is regarded 
as the singularity to be considered. 
 In other words, it corresponds the locus where the condition 
for genuine $(3,5)$-distribution does not hold. 
 Let $\tilde{\mathcal{R}}\subset \ojet{X_1}$ denote the complement 
of the singularity $\tilde{\Sigma}$: 
\begin{equation}\label{eq:def_dr_35}
\tilde{\mathcal{R}}:=X_1^{(1)}\setminus\tilde{\Sigma}\subset X_1^{(1)}. 
\end{equation}
 It is the open differential relation to be considered 
in order to show Theorem~\ref{thm:key_(3,5)}. 

  We will show that the differential relation 
$\tilde{\mathcal{R}}\subset \ojet{X_1}$ satisfies the \textit{h}-principles. 
 To this end, Gromov's convex integration method is applied. 
 The essence of the proof is the following proposition. 
%
%
\begin{prop}\label{prop:essence_(3,5)}
  The open differential relation $\tilde{\mathcal{R}}\subset \ojet{X_1}$ 
is ample. 
\end{prop}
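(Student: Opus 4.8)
The plan is to verify ampleness directly on each principal subspace of the affine bundle $\ojet{X_1}\to X_1$, rather than to invoke the thin--singularity corollary, which (as I note below) actually fails here. Fix a point of $X_1$, i.e.\ a pair $(\alpha_1,\alpha_2)$ of covectors at $p\in M$, and a hyperplane $\tau\subset T_pM$ with conormal $\nu$; the associated principal subspace $P$ is the $10$-dimensional affine fiber on which the $0$-jet $(\alpha_1,\alpha_2)$ is frozen and only the transverse derivatives vary. The first step is the key local computation: working in coordinates with $\nu=dx^5$, one finds that along $P$ the symbol sends $d\alpha_a\mapsto(d\alpha_a)_0+\xi_a\wedge\nu$, where $\xi_a$ ranges over the $4$-dimensional space $W=\spn{dx^1,\dots,dx^4}$, while the two remaining jet directions (the transverse derivatives of the $\nu$-components of $\alpha_1,\alpha_2$) act trivially on $F_1$. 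Hence $\tilde{\mathcal R}\cap P$ is governed entirely by the pair of $4$-forms
\[
  \Theta_a:=\alpha_1\wedge\alpha_2\wedge(d\alpha_a)_0+\alpha_1\wedge\alpha_2\wedge\xi_a\wedge\nu,\qquad a=1,2,
\]
and the relation asks that $\Theta_1,\Theta_2$ be pointwise linearly independent. Fixing a volume form identifies $\bigwedge^4T^\ast_pM\cong\R^5$, so ``independent'' becomes ``the $2\times5$ matrix with rows $\Theta_1,\Theta_2$ has rank $2$.''

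Second, I would split according to the position of $\nu$. If $\nu\in\spn{\alpha_1,\alpha_2}$ (equivalently, the hyperplane $\tau$ contains $\D$) or $\alpha_1\wedge\alpha_2=0$, then $\alpha_1\wedge\alpha_2\wedge\xi\wedge\nu\equiv0$, the perturbation term disappears, and $F_1$ is constant on $P$. Thus $\tilde{\mathcal R}\cap P$ is either empty or all of $P$, and both are ample. This degenerate case already shows that $\tilde\Sigma$ is \emph{not} thin (its slice can be all of $P$), which is precisely why a direct verification, rather than the corollary, is needed.

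Third, in the main case $\alpha_1\wedge\alpha_2\ne0$ and $\nu\notin\spn{\alpha_1,\alpha_2}$, a short computation shows that the linear map $\Phi\colon\xi\mapsto\alpha_1\wedge\alpha_2\wedge\xi\wedge\nu$ has exactly $2$-dimensional image (its kernel is the $2$-dimensional intersection $W\cap\spn{\alpha_1,\alpha_2,\nu}$). The map $(\xi_1,\xi_2)\mapsto(\Theta_1,\Theta_2)$ is then an affine surjection of $W\times W$ onto the product of the parallel affine $2$-planes $\pi_a:=u_a+\im\Phi$, where $u_a=\alpha_1\wedge\alpha_2\wedge(d\alpha_a)_0$. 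Since ampleness is preserved under pullback by surjective affine maps and under products with affine spaces, it suffices to study the dependence locus inside $\pi_1\times\pi_2$. Writing $\bar u_a$ for the component of $u_a$ transverse to $\im\Phi$, I would observe that if $\bar u_1,\bar u_2$ are not both zero, then the dependence locus has codimension $\ge2$ in $\pi_1\times\pi_2$ (the transverse components force the collinearity factor, leaving at least two independent conditions), so its complement is open, dense and connected, hence ample.

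The remaining sub-case, where both $u_a\in\im\Phi$, is the main obstacle: there the dependence locus is the zero set of a single determinant, of codimension $1$, so connectedness of the complement gives nothing. Here the problem reduces to showing that $\{(P,Q)\in\R^2\times\R^2:P,Q\ \text{linearly independent}\}$ is ample. Its two components $\{\det[P\mid Q]\gtrless0\}$ are cones invariant under positive scaling, and I would verify that no nonzero linear functional can be $\le0$ on such a cone: testing against $(\pm e_1,\pm e_2)$ and against $(e_1,\,e_2+re_1)$ for all $r$ forces the functional to vanish. Thus each component has convex hull equal to $\R^4$, so the set is ample. Assembling the three cases shows that $\tilde{\mathcal R}\cap P$ is ample for every principal subspace $P$, which is exactly the assertion.
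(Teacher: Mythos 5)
Your proposal is correct, and it reaches the conclusion by a genuinely different route from the paper, even though both arguments are direct verifications of ampleness on each principal subspace (both you and the paper must handle the possibility that the singularity fills an entire principal subspace, so the thin-singularity corollary is indeed unavailable). The paper fixes the principal direction $x_1$, encodes the two $4$-forms as coordinate vectors $(B^i_1,\dots,B^i_5)$, and splits according to whether $(B^1_1,B^2_1)\ne(0,0)$: in that case the ten minor equations reduce to four \emph{linear} equations whose structured $4\times 8$ coefficient matrix is shown to have rank $0$ or at least $2$; in the complementary case it classifies the quadric $\{F_{23}=0\}$ in $\R^8$ by its affine normal form (cone, hyperboloid, hyperbolic paraboloid) and exhibits explicit segments realizing the convex hulls of the two components. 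Your decomposition does not coincide with the paper's even after matching conventions: you quotient by the kernel of $\Phi\colon\xi\mapsto\alpha_1\wedge\alpha_2\wedge\xi\wedge\nu$, pass by an affine surjection to $\pi_1\times\pi_2$, and split according to whether the components of $u_1,u_2$ transverse to $\im\Phi$ vanish --- the paper's functional $B^{\,\cdot}_1$ is only one of the three functionals cutting out $\im\Phi$, so the paper's Case~2 mixes your Cases A and B. In the genuinely codimension-one situation you land on the clean model $\{(P,Q)\in\R^2\times\R^2 : \det[P\mid Q]\ne 0\}$ and prove ampleness of its two components by the positive-scaling-cone / supporting-functional argument, which replaces the paper's three-way quadric analysis. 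What your route buys is brevity and invariance: all principal directions are treated uniformly, the trivial $\R^2$ of jet directions is discarded at the outset, and the source of the non-thinness is isolated in the $GL_2$ model. What it costs is the need to state and prove the auxiliary lemma that ampleness is preserved under preimages by surjective affine maps and under products with affine factors; this is true and standard, but it carries the whole reduction and should appear explicitly. One step also worth spelling out in your Case B: when $\bar u_1,\bar u_2$ are linearly independent the dependence locus is empty, and when they are proportional with at least one nonzero it is an affine subspace of codimension exactly $2$ in $\pi_1\times\pi_2$; both alternatives yield ampleness, which completes the case check.
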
 

\begin{proof}
  It is sufficient to discuss the problem using local coordinates 
of the base manifold $M$ of the bundles. 
 Let $(x_1,x_2,\dots,x_5)$ be local coordinates 
of the $5$-dimensional manifold $M$. 
 Then $\{(dx_1)_p,\dots,(dx_5)_p\}$ is a basis of the fiber $T^\ast_pM$. 
 Let $(a_1,\dots,a_5)$ be coordinates on the fiber of $T^\ast M$. 
 Similarly, $\{(dx_i\wedge dx_j)_p\}_{1\le i<j\le 5}$ is a basis of the fiber 
$\lft(\bigwedge^2 T^\ast M\rgt)_p$. 
 Let $(z_{12},\dots,z_{45})$ be coordinates on the fiber 
of $\bigwedge^2 T^\ast M$. 
 Further, let $(a_1,\dots,a_5,y_{11},y_{12},\dots,y_{55})$ be coordinates of 
the fiber $(T^\ast M)^{(1)}_v$, 
where $y_{ij}$ corresponds to $\rd a_i/\rd x_j$. 

  By using such local coordinates, we write down the singularity 
$\tilde{\Sigma}\subset \ojet{\lft(\bigoplus^2 T^\ast M\rgt)}=\ojet{X_1}$ as follows. 
 Recall that $\tilde{\Sigma}$ is defined 
as the inverse image $\tilde{\Sigma}={F_1}^{-1}(\tilde{S})$ 
by the symbol $F_1\colon\ojet{X_1}\to\sect{Z_1}$ 
of the linear differential operator 
$\mathcal{F}_1\colon\sect{X_1}\to\sect{Z_1}$ 
defined by Equation~\eqref{eq:def_f_(3,5)} concerning exterior derivative 
of differential forms. 
 Note that, by the exterior derivative, 
the coordinates $y_{ij}-y_{ji}$ correspond 
to the anti-symmetric components $z_{ij}$. 
 Recall that the set $\tilde{S}\subset\ojet{Z_1}$ 
is defined by the following condition (see Equation~\eqref{eq:def_s_(3,5)}): 
two $4$-forms 
\begin{equation}\label{eq:dep_forms_(3,5)}
  \alpha_1\wedge\alpha_2\wedge\omega_1, \quad 
  \alpha_1\wedge\alpha_2\wedge\omega_2
\end{equation}
are linearly independent on each fiber, 
where $\alpha_i\in\sect{T^\ast M}$ are $1$-forms 
and $\omega_i\in\sect{\bigwedge^2 T^\ast M}$ are $2$-forms on $M$. 
 Then, in order to write down the singularity 
$\tilde{\Sigma}={F_1}^{-1}(\tilde{S})\subset \ojet{X}$ by using local coordinates, 
we write down $\alpha_i$ and $\omega_i$ on a fiber over $p\in M$ 
by such coordinates as follows: 
\begin{align*}
  \alpha_i&=\sum_{j=1}^5 a^i_jdx_j 
            =a^i_1dx_1+a^i_2dx_2+\dots+a^i_5dx_5, \quad (i=1,2), \\
  \omega_i&=\sum_{1\le j<k\le 5}z^i_{jk}dx_j\wedge dx_k 
          =z^i_{12}dx_1\wedge dx_2+\dots+z^i_{45}dx_{4}\wedge dx_5, 
            \qquad (i=1,2). 
\end{align*}
 Following this representation, the $4$-forms 
in Equation~\eqref{eq:dep_forms_(3,5)} are written down as follows. 
 First, we have 
\begin{equation*}
  \alpha_1\wedge\alpha_2=\sum_{1\le i<j\le 5}A_{ij}dx_i\wedge dx_j, 
\end{equation*}
where $A_{ij}$ are minor determinants 
\begin{equation*}
  A_{ij}:=
  \begin{vmatrix}
    a^1_i & a^1_j\\ a^2_i & a^2_j 
  \end{vmatrix},\quad 1\le i<j\le 5. 
\end{equation*}
 On the other hand, $z^i_{jk}$ is valid for $j<k$. 
 Then the $4$-forms in Equation~\eqref{eq:dep_forms_(3,5)} are 
\begin{equation}\label{eq:loc_dep_forms}
  \alpha_1\wedge\alpha_2\wedge\omega_i
  =\sum_{r=1}^5(B^i_r) dx_1\wedge\dots\wedge\widehat{dx_r}\wedge
     \dots\wedge dx_5, 
   \qquad i=1,2, 
\end{equation}
where ``$\widehat{dx_r}$'' implies ``without $dx_r$'', and 
the coefficients are 
\begin{equation}\label{eq:def_Bij}
  \begin{aligned}
    B^i_1&=A_{23}z^i_{45}-A_{24}z^i_{35}+A_{25}z^i_{34}
           +A_{34}z^i_{25}-A_{35}z^i_{24}+A_{45}z^i_{23}, \\
    B^i_2&=A_{13}z^i_{45}-A_{14}z^i_{35}+A_{15}z^i_{34}
           +A_{34}z^i_{15}-A_{35}z^i_{14}+A_{45}z^i_{13}, \\
    B^i_3&=A_{12}z^i_{45}-A_{14}z^i_{25}+A_{15}z^i_{24}
           +A_{24}z^i_{15}-A_{25}z^i_{14}+A_{45}z^i_{12}, \\
    B^i_4&=A_{12}z^i_{35}-A_{13}z^i_{25}+A_{15}z^i_{23}
           +A_{23}z^i_{15}-A_{25}z^i_{13}+A_{35}z^i_{12}, \\
    B^i_5&=A_{12}z^i_{34}-A_{13}z^i_{24}+A_{14}z^i_{23}
           +A_{23}z^i_{14}-A_{24}z^i_{13}+A_{34}z^i_{12}, 
  \end{aligned}\qquad i=1,2. 
\end{equation}
 Then, the description of the singularity 
$\tilde{\Sigma}\subset\ojet{X_1}$ by the local coordinates 
is obtained by using the description in Equation~\eqref{eq:loc_dep_forms} 
for the $4$-forms in Equation~\eqref{eq:dep_forms_(3,5)}. 
 Recall that $\tilde{S}\subset\ojet{Z_1}$ is the locus 
where two $4$-forms are linearly dependent. 
 This implies the following two vectors are linearly dependent: 
\begin{equation*}
  (B^1_1,B^1_2,\dots,B^1_5),\qquad (B^2_1,B^2_2,\dots,B^2_5). 
\end{equation*}
 In other words, the following system of equations holds: 
\begin{equation}\label{eq:sys_for_sing}
  \begin{aligned}
    F_{12}&:=
            \begin{vmatrix}
              B^1_1& B^1_2 \\ B^2_1& B^2_2
            \end{vmatrix}=0,\quad
            F_{13}:=
            \begin{vmatrix}
              B^1_1& B^1_3 \\ B^2_1& B^2_3
            \end{vmatrix}=0,\dots,\quad 
            F_{15}:=
            \begin{vmatrix}
              B^1_1& B^1_5 \\ B^2_1& B^2_5
            \end{vmatrix}=0, \\
    F_{23}&:=
            \begin{vmatrix}
              B^1_2& B^1_3 \\ B^2_2& B^2_3
            \end{vmatrix}=0,\dots,\quad 
            F_{34}:=
            \begin{vmatrix}
              B^1_3& B^1_4 \\ B^2_3& B^2_4
            \end{vmatrix}=0,\dots,\quad 
            F_{45}:=
            \begin{vmatrix}
              B^1_4& B^1_5 \\ B^2_4& B^2_5
            \end{vmatrix}=0.
  \end{aligned}
\end{equation}
 Note that this system consists of $10={}_5C_2$ equations, 
because the column numbers of the $2\times 2$ matrices 
in Equations~\eqref{eq:sys_for_sing}
are combinations of $1,2,\dots,5$. 
 This is the local description of the singularity $\tilde{S}\subset\ojet{Z_1}$. 
 In other words, the system of Equations~\eqref{eq:sys_for_sing} 
determines the singularity $\tilde{S}\subset\ojet{X_1}$. 
 Recall that the singularity $\tilde{\Sigma}\subset\ojet{X_1}$ is defined 
as $\tilde{\Sigma}={F_1}^{-1}(\tilde{S})$ (see Equation~\eqref{eq:def_sgm1}). 

  In order to show the ampleness of the differential relation 
$\tilde{\mathcal{R}}=\ojet{X_1}\setminus \tilde{\Sigma}$, 
we should observe the intersection of $\tilde{\mathcal{R}}\subset\ojet{X_1}$ 
or $\tilde{\Sigma}\subset\ojet{X_1}$ with principal subspaces 
(see Section~\ref{sec:cvx_itgr} for definition). 
 To make the discussion simple, we take a principal direction $P_1$ 
that concerns $x_1$ in the local coordinates of the base manifold $M$. 
 In other words, the intersection of $\tilde{\Sigma}$ and $P_1$ 
is determined by the system of equations (Equations~\eqref{eq:sys_for_sing}) 
with variables only $z^i_{1l}$, $l=1,2,\dots,5$. 
 Other $z^i_{ml}$ and $a^i_j$ are constant on the intersection 
in a fiber over $p\in M$. 
 Because, $\tilde{\Sigma}\subset\ojet{X_1}$ is defined as $\tilde{\Sigma}={F_1}^{-1}(\tilde{S})$ 
by using the linear differential operator 
$\mathcal{F}_1\colon\sect{X_1}\to\sect{Z_1}$
defined by exterior derivative of differential forms 
(see Equations~\eqref{eq:def_f_(3,5)} and~\eqref{eq:def_sgm1}). 
 We use the same symbol $B^i_j$ for the intersection. 

  We now show that $\tilde{\Sigma}\cap P_1\subset P_1$ is ample. 
 The discussion is divided into two cases. 
 Case~1 is when $(B^1_1,B^2_1)\ne (0,0)$, 
and Case~2 is when $(B^1_1,B^2_1)=(0,0)$. 
 As we mentioned above, now Equations~\eqref{eq:sys_for_sing} are considered  
on the principal subspace $P_1\subset\ojet{X_1}$. 
 In other words, only $z^i_{1l}$ are variables. 
 From the definition of $B^i_j$ (see Equations~\eqref{eq:def_Bij}), 
$B^i_1$, $i=1,2$, have no variables if a fiber is fixed. 
 The two cases above depend on the choices of a base point. 

\noindent\underline{\textbf{Case 1.}}\ 
 Assume that $(B^1_1,B^2_1)\ne (0,0)$. 
 Then the system of equations in Equations~\eqref{eq:sys_for_sing} 
is reduced to that of the first four equations: 
\begin{equation*}
  F_{12}=0,\quad F_{13}=0,\quad F_{14}=0,\quad F_{15}=0. 
\end{equation*}
 In fact, if these hold, other equations hold automatically. 
 Recall that this system determines $\tilde{\Sigma}\cap P_1$. 
 It is regarded as the system of linear equations 
with respect to eight variables, 
$z^1_{12},z^1_{13},z^1_{14},z^1_{15},z^2_{12},z^2_{13},z^2_{14},z^2_{15}$. 
 They are actually written down as follows: 
\begin{equation}\label{eq:Case1}
  \lft\{
  \begin{aligned}
    (\ba2145)\zed113-&(\ba2135)\zed114+(\ba2134)\zed115
    -(\ba1145)\zed213+(\ba1135)\zed214-(\ba1134)\zed215\\
    &=B^1_1(A_{13}\zed245-A_{14}\zed235+A_{15}\zed234)
    -B^2_1(A_{13}\zed145-A_{14}\zed135+A_{15}\zed134), \\
    (\ba2145)\zed112-&(\ba2125)\zed114+(\ba2124)\zed115
    -(\ba1145)\zed212+(\ba1125)\zed214-(\ba1124)\zed215\\
    &=B^1_1(A_{12}\zed245-A_{14}\zed225+A_{15}\zed224)
    -B^2_1(A_{12}\zed145-A_{14}\zed125+A_{15}\zed124), \\
    (\ba2135)\zed112-&(\ba2125)\zed113+(\ba2123)\zed115
    -(\ba1135)\zed212+(\ba1125)\zed213-(\ba1123)\zed215\\
    &=B^1_1(A_{12}\zed235-A_{13}\zed225+A_{15}\zed223)
    -B^2_1(A_{12}\zed135-A_{13}\zed125+A_{15}\zed123), \\
    (\ba2134)\zed112-&(\ba2124)\zed113+(\ba2123)\zed114
    -(\ba1135)\zed212+(\ba1124)\zed213-(\ba1123)\zed214\\
    &=B^1_1(A_{12}\zed234-A_{13}\zed224+A_{14}\zed223)
    -B^2_1(A_{12}\zed134-A_{13}\zed124+A_{14}\zed123). 
  \end{aligned}
  \rgt.
\end{equation}
 Note that the right-hand sides are constants. 
 Then the coefficient matrix of the system~\eqref{eq:Case1} is
\begin{equation}\label{eq:matrix(3,5)}
  \begin{pmatrix}
    0&\ba2145&-\ba2135&\ba2134&0&-\ba1145&\ba1135&-\ba1134 \\
    \ba2145&0&-\ba2125&\ba2124&-\ba1145&0&\ba1125&-\ba1124 \\
    \ba2135&-\ba2125&0&\ba2123&-\ba1135&\ba1125&0&-\ba1123 \\
    \ba2134&-\ba2124&\ba2123&0&-\ba1134&\ba1124&-\ba1123&0 
  \end{pmatrix}. 
\end{equation}
 The rank of this matrix can never be~$2$. 
 In fact, it is a $4\times 8$~matrix 
each of whose first and last four rows form symmetric blocks up to sign: 
\begin{equation*}
  \begin{array}{llll} 
    a_{jj}=0,\quad     & a_{ij}=\pm a_{ji},      &\quad &(j\le 4), \\
    a_{(j-4)j}=0,\quad & a_{ij}=\pm a_{j-4,i+4}, &\quad &(j>4), 
  \end{array}
\end{equation*}
where $a_{ij}$ is an $(i,j)$-entry of the matrix. 
 Suppose that $a_{ij}\ne 0$ for $j\le 4$. 
 Take a minor determinant consists 
of the $i$-th and $j$-th rows and of the $i$-th and $j$-th columns. 
 Then we have 
\begin{equation*}
  \begin{vmatrix}
    a_{ii}& a_{ij}\\ a_{ji} & a_{jj}
  \end{vmatrix}
  =0\pm {a_{ij}}^2=\pm {a_{ij}}^2\ne 0. 
\end{equation*}
 When $a_{ij}\ne0$ for $j>4$, similarly we have 
\begin{equation*}
  \begin{vmatrix}
    a_{i\ (i+4)}& a_{ij}\\ a_{(j-4)\ (i+4)} & a_{(j-4)\ j}
  \end{vmatrix}
  =0\pm {a_{ij}}^2=\pm {a_{ij}}^2\ne 0. 
\end{equation*}
 Then the rank of the coefficient matrix~\eqref{eq:matrix(3,5)} above 
is greater than or equal to~$2$ unless the rank is not $0$. 
 When the rank is $0$, the singularity $\tilde{\Sigma}\cap P_1=P_1$ 
is of codimension~$0$ in the principal subspace. 
 This implies $\tilde{\mathcal{R}}\cap P_1=\emptyset$. 
 Then the differential relation $\tilde{\mathcal{R}}\subset\ojet{X_1}$ 
is ample by definition. 
 When the rank of the coefficient matrix~\eqref{eq:matrix(3,5)} above 
is greater than or equal to~$2$, 
the singularity $\tilde{\Sigma}\cap P_1=P_1$ is thin, that is, 
of codimension at least~$2$ in the principal subspace. 
 Hence the differential relation $\tilde{\mathcal{R}}\subset\ojet{X_1}$ 
is ample. 

\noindent\underline{\textbf{Case~2.}}\ 
  Assume that $(B^1_1,B^2_1)= (0,0)$. 
 Then the system of equations in Equations~\eqref{eq:sys_for_sing} 
is reduced to that of the latter six equations: 
\begin{equation*}
  F_{23}=0,\quad F_{24}=0,\quad F_{25}=0,\quad 
  F_{34}=0,\quad F_{35}=0,\quad F_{45}=0. 
\end{equation*}
 In fact, the first four equations hold immediately from the assumption. 
 Then the singularity $\tilde{\Sigma}\cap P_1$ is considered 
as an intersection of six hypersurfaces $\{F_{ij}=0\}$. 
 If the complement $[\{F_{ij}=0\}\cap P_1]^c=\{F_{ij}\ne 0\}\cap P_1$ 
of one of the hypersurfaces is ample in $P_1$, 
then the differential relation $\tilde{\mathcal{R}}\cap P_1$ is ample in $P_1$. 
 In the following, 
we first discuss the case when all hypersurfaces are linear, 
that is, the complement of each hyperplane is not ample. 
 Then, when at least one of them is not linear, say $F_{23}$, 
we will show that the complement $[\{F_{23}=0\}\cap P_1]^c$ is ample. 

  First, we write down $F_{23}$ as a polynomial with eight variables 
$\zed112, \zed113, \zed114, \zed115, \zed212, \zed213, \zed214, \zed215$. 
 The functions $B^i_j$ are defined in Equations~\eqref{eq:def_Bij}. 
 On the principal subspace $P_1$, they are rewritten as follows: 
\begin{align*}
    B^i_2&=A_{45}z^i_{13}-A_{35}z^i_{14}+A_{34}z^i_{15}+C^i_2, \\
    B^i_3&=A_{45}z^i_{12}-A_{25}z^i_{14}+A_{24}z^i_{15}+C^i_3,\quad i=1,2, \\
\end{align*}
where 
\begin{align*}
  C^i_2&:=A_{13}z^i_{45}-A_{14}z^i_{35}+A_{15}z^i_{34}, \\
  C^i_3&:=A_{12}z^i_{45}-A_{14}z^i_{25}+A_{15}z^i_{24},\quad i=1,2, 
\end{align*}
are constants. 
 Then we have 
\begin{align}\label{eq:f23}
  F_{23}=&
          \begin{vmatrix}
            B^1_2&B^1_3\\ B^2_2&B^2_3
          \end{vmatrix}
          =B^1_2B^2_3-B^1_3B^2_2 \\
  =&(-\aaa45^2)\zed112\zed213+(\aaa45\aaa35)\zed112\zed214
     +(-\aaa45\aaa34)\zed112\zed215 \notag \\
        &+(\aaa45^2)\zed113\zed212+(-\aaa45\aaa25)\zed113\zed214
          +(\aaa45\aaa24)\zed113\zed215 \notag \\
        &+(-\aaa35\aaa45)\zed114\zed212+(\aaa25\aaa45)\zed114\zed213
          +(\aaa25\aaa34-\aaa24\aaa35)\zed114\zed215 \notag \\
        &+(\aaa34\aaa45)\zed115\zed212+(-\aaa24\aaa45)\zed115\zed213
          +(\aaa24\aaa35-\aaa25\aaa34)\zed115\zed214 \notag \\
        &+(-\ca2245)\zed112+(\ca2345)\zed113+(\ca2225-\ca2335)\zed114
          +(\ca2334-\ca2224)\zed115 \notag \\
        &+(\ca1245)\zed212+(-\ca1345)\zed213+(\ca1335-\ca1225)\zed214
          +(\ca1224-\ca1334)\zed215 \notag \\
        &+(\cc12\cc23-\cc13\cc22). \notag
\end{align}

  If all the six functions $F_{23},\ F_{24},\dots, F_{45}$ are linear, 
we can deduce the differential relation $\tilde{\mathcal{R}}\subset\ojet{X_1}$ 
is ample as follows. 
  The function $F_{23}$ is linear if and only if 
\begin{equation}\label{eq:lin-hypsf}
  \lft\{
  \begin{aligned}
    &\aaa45=0, \\
    &\aaa24\aaa35-\aaa25\aaa34=0. 
  \end{aligned}
  \rgt.
\end{equation}
 Then, by the same discussion for other functions $F_{ij}$, 
if all six functions $F_{23},\ F_{24},\dots, F_{45}$ are linear, we have 
\begin{equation*}
  \aaa23=\aaa24=\aaa25=\aaa34=\aaa35=\aaa45=0. 
\end{equation*}
 This implies that all $F_{23},\ F_{24},\dots, F_{45}$ are constant functions. 
 Then the singularity $\tilde{\Sigma}\cap P_1$ is whole $P_1$ or empty $\emptyset$. 
 Therefore the differential relation $\tilde{\mathcal{R}}\subset\ojet{X_1}$ is ample. 

  In the following, we assume at least one of $F_{23}, F_{24},\dots, F_{45}$ 
is not linear. 
 Without loss of generality, we may assume $F_{23}$ is not linear. 
 In other words, Condition~\eqref{eq:lin-hypsf} does not hold. 

  Since $F_{23}$ is a polynomial of degree~$2$, 
we observe the normal form of the quadric hypersurface $F_{23}=0$. 
 Setting the matrices as follows, the function $F_{23}$ is represented as: 
\begin{equation*}
  F_{23}=\bs{z}^\mathsf{T}G\bs{z}+2H^\mathsf{T}\bs{z}+K  
\end{equation*}
(see Equation~\ref{eq:f23}). 
 The symbol $A^\mathsf{T}$ denotes the transpose of matrix $A$. 
 Set
\begin{equation*}
  \bs{z}:=
  \begin{pmatrix}
    \zed112&\zed113&\zed114&\zed115&\zed212&\zed213&\zed214&\zed215
  \end{pmatrix}^\mathsf{T}, 
\end{equation*}
\begin{equation*}
  G:=\frac{1}{2}
  \begin{pmatrix}
    \bs{O} & G_1\\ {G_1}^\mathsf{T} & \bs{O}
  \end{pmatrix}, 
\end{equation*} where 
\begin{equation*}
  G_1:=
  \begin{pmatrix}
    0&-\aaa45^2&\aaa35\aaa45&-\aaa34\aaa45 \\
    \aaa45^2&0&-\aaa25\aaa45&\aaa24\aaa45 \\
    -\aaa35\aaa45&\aaa25\aaa45&0&(\aaa25\aaa34-\aaa24\aaa35) \\
    \aaa34\aaa45&-\aaa24\aaa45&(\aaa24\aaa35-\aaa25\aaa34)&0
  \end{pmatrix}
\end{equation*}
and $\bs{O}$ is the $4\times4$ zero matrix, 
\begin{align*}
  H:= &\lft( -\ca2245\quad \ca2345\quad (\ca2225-\ca2335)\quad 
       (\ca2334-\ca2224)\rgt.  \\
     &\quad \lft. \ca1245\quad -\ca1345\quad (\ca1335-\ca1225)\quad 
       (\ca1224-\ca1334)\rgt) ^\mathsf{T}, 
\end{align*} 
and 
\begin{equation*}
  K:=\cc12\cc23-\cc13\cc22. 
\end{equation*}
 The matrix $G$ corresponding to degree~$2$ part is a symmetric matrix. 
 Since it is diagonalizable by linear transformation, 
then we obtain the normal form of the quadric hypersurface $\{F_{23}=0\}$. 
 The eigenvalues of $G$ are $\gamma$, $-\gamma$ and $0$, where 
\begin{equation*}
  \gamma:=\frac{1}{2}
  \sqrt{\aaa45^2(\aaa23^2+\aaa25^2+\aaa34^2+\aaa35^2+\aaa45^2)
    +(\aaa24\aaa35-\aaa25\aaa34)^2}, 
\end{equation*}
with respective multiplicities $2$, $2$, and $4$. 
 Recall that we now assume that at least one of Equations~\eqref{eq:lin-hypsf} 
does not hold. 
 Then it follows that $\gamma\ne 0$. 
 In other words, $G$ has positive eigenvalue $\gamma>0$ of multiplicity~$2$ 
and negative eigenvalue $-\gamma<0$ of multiplicity~$2$. 
 Then, by appropriate affine transformations, 
the quadric hypersurface $\{F_{23}=0\}$ is reduced 
to one of the following normal forms in the $(x_1,\dots,x_8)$-space: 
\begin{enumerate}
\item ${x_1}^2+{x_2}^2-{x_3}^2-{x_4}^2=c$,\quad ``central'' type, 
\item ${x_1}^2+{x_2}^2-{x_3}^2-{x_4}^2+x_5=0$, \quad ``non-central'' type. 
\end{enumerate}

  Since the transformations are affine, 
we can discuss the ampleness of the complement 
of the quadric hypersurface $\{F_{23}=0\}$ using the normal forms above. 

\noindent
\underline{(1)\ central quadrics.}\quad 
In $\mathbb{R}^8$ with coordinates $(x_1,x_2,\dots,x_8)$, 
the function $F_{23}$ is reduced to $F_{23}={x_1}^2+{x_2}^2-{x_3}^2-{x_4}^2-c$. 
 We keep the same notation $F_{23}$ for this normal form. 
 The complement of $\{F_{23}=0\}$ is divided 
into the following two path-connected components: 
\begin{equation*}
  U_\pm=\lft\{(x_1,\dots,x_8)\in\mathbb{R}^8\mid 
  {x_1}^2+{x_2}^2-{x_3}^2-{x_4}^2\gtrless c\rgt\}. 
\end{equation*}
 We will show that $\cv{U_\pm}=\mathbb{R}^8$, 
where $\cv{U_\pm}$ denotes the convex hull of $U_{\pm}$. 
 This case is further divided into two cases: 
(i)\ $c=0$, (``cone'' type), (ii)\ $c\ne 0$, (``hyperboloid" type). 

\noindent Case~(i): $c=0$.\ 
 Now, $F_{23}={x_1}^2+{x_2}^2-{x_3}^2-{x_4}^2$. 
 First, we show $\cv{U_+}=\mathbb{R}^8$. 
 With an arbitrary point $p=(p_1,p_2,\dots,p_8)\in\mathbb{R}^8\setminus U_+$, 
we show $p\in\cv{U_+}$. 
 From the assumption, we have 
\begin{equation*}
  {p_1}^2+{p_2}^2-{p_3}^2-{p_4}^2\le 0. 
\end{equation*}
 Then by setting 
\begin{align*}
  p_+^1&:=\lft(\sqrt{{p_3}^2+{p_4}^2-{p_2}^2+\epsilon},p_2,p_3,p_4,
  \dots,p_8\rgt), \\ 
  p_+^2&:=\lft(-\sqrt{{p_3}^2+{p_4}^2-{p_2}^2+\epsilon},p_2,p_3,p_4,
  \dots,p_8\rgt),
\end{align*}
for $\epsilon>0$, we have two points $p_+^1,p_+^2\in U_+$ 
(see Figure~\ref{fig:cone}). 
%
%
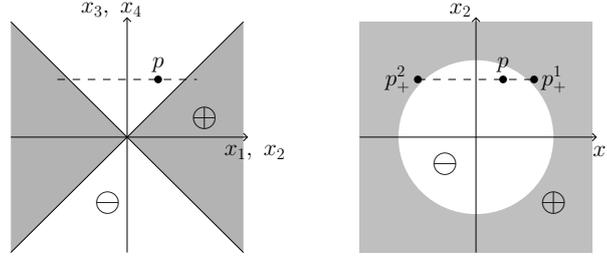
\begin{figure}[htb]
  \centering

\definecolor{grey}{RGB}{128,128,128}
\definecolor{cb3b3b3}{RGB}{179,179,179}

\def \globalscale {0.5100000}
\begin{tikzpicture}[y=1cm, x=1cm, yscale=\globalscale,xscale=\globalscale, every node/.append style={scale=\globalscale}, inner sep=0pt, outer sep=0pt]
  \path[fill=grey,opacity=0.5,line cap=butt,line join=miter,line 
  width=0.0cm,miter limit=4.0] (11.0, 27.7) -- (11.0, 21.7) -- (17.0, 21.7) -- 
  (17.0, 27.7) -- cycle;

  \path[fill=white,line width=0.0cm] (14.0, 24.7) ellipse (2.0cm and 2.0cm);

  \path[fill=cb3b3b3,even odd rule,draw opacity=0.0,line cap=butt,line 
  join=miter,line width=0.0cm] (8.0, 27.7) -- (5.0, 24.7) -- (8.0, 21.7);

  \path[fill=cb3b3b3,even odd rule,draw opacity=0.0,line cap=butt,line 
  join=miter,line width=0.0cm] (2.0, 27.7) -- (5.0, 24.7) -- (2.0, 21.7);

  \path[draw=black,even odd rule,line cap=butt,line join=miter,line width=0.0cm]
   (5.0, 27.8) -- (5.0, 21.7);

  \path[draw=black,even odd rule,line cap=butt,line join=miter,line width=0.0cm]
   (2.0, 21.7) -- (8.0, 27.7);

  \path[draw=black,even odd rule,line cap=butt,line join=miter,line width=0.0cm]
   (2.0, 27.7) -- (8.0, 21.7);

  \path[draw=black,even odd rule,line cap=butt,line join=miter,line width=0.0cm]
   (2.0, 24.7) -- (8.1, 24.7);

  \path[draw=black,line cap=butt,line join=miter,line width=0.0cm,miter 
  limit=4.0] (4.9, 27.7) -- (5.0, 27.8) -- (5.1, 27.7);

  \path[draw=black,line cap=butt,line join=miter,line width=0.0cm,miter 
  limit=4.0] (8.0, 24.8) -- (8.1, 24.7) -- (8.0, 24.6);

  \node at (8.3, 24.3) {\Large $x_1,\ x_2$};

  \node at (4.6, 28.0) {\Large $x_3,\ x_4$};

  \path[draw=black,even odd rule,line cap=butt,line join=miter,line width=0.0cm]
   (14.0, 27.8) -- (14.0, 21.7);

  \path[draw=black,even odd rule,line cap=butt,line join=miter,line width=0.0cm]
   (11.0, 24.7) -- (17.1, 24.7);

  \path[draw=black,line cap=butt,line join=miter,line width=0.0cm,miter 
  limit=4.0] (13.9, 27.7) -- (14.0, 27.8) -- (14.1, 27.7);

  \path[draw=black,line cap=butt,line join=miter,line width=0.0cm,miter 
  limit=4.0] (17.0, 24.8) -- (17.1, 24.7) -- (17.0, 24.6);

  \node at (17.3, 24.3) {\Large $x_1$};

  \node at (13.6, 28.0) {\Large $x_2$};

  \path[draw=black,line cap=butt,line join=miter,line width=0.0cm,miter 
  limit=4.0,dash pattern=on 0.1cm off 0.1cm] (3.2, 26.2) -- (6.8, 26.2);

  \path[fill=black,line width=0.0cm,dash pattern=on 0.1cm off 0.1cm] (12.5, 
  26.2) ellipse (0.1cm and 0.1cm) node [left, xshift=-0.2cm]{\Large $p_+^2$};



  \path[fill=black,line width=0.0cm,dash pattern=on 0.1cm off 0.1cm] (5.8, 26.2)
   ellipse (0.1cm and 0.1cm) node[above, yshift=0.2cm]{\Large $p$};

  \path[draw=black,line cap=butt,line join=miter,line width=0.0cm,miter 
  limit=4.0,dash pattern=on 0.1cm off 0.1cm] (12.5, 26.2) -- (15.5, 26.2);

  \path[fill=black,line width=0.0cm,dash pattern=on 0.1cm off 0.1cm] (15.5, 
  26.2) ellipse (0.1cm and 0.1cm) node[right, xshift=0.2cm]{\Large $p_+^1$};

  \path[fill=black,line width=0.0cm,dash pattern=on 0.1cm off 0.1cm] (14.7, 
  26.2) ellipse (0.1cm and 0.1cm) node[above, yshift=0.2cm]{\Large $p$};

  \node at (4.5, 23) {\Huge $\ominus$}; 
  \node at (7, 25.2) {\Huge $\oplus$}; 
  \node at (13.2, 24) {\Huge $\ominus$}; 
  \node at (16, 23) {\Huge $\oplus$};

\end{tikzpicture}
  \caption{cone type}
  \label{fig:cone}
\end{figure} 
 The point $p\in \mathbb{R}^8\setminus U_+$ 
lies on the line segment $p_+^1p_+^2$, 
and divides it in the ratio 
$\sqrt{{p_3}^2+{p_4}^2-{p_2}^2+\epsilon}-p_1 : 
p_1+\sqrt{{p_3}^2+{p_4}^2-{p_2}^2+\epsilon}$ internally. 
 In other words, $p\in\cv{U_+}$. 
 Then it follows $\cv{U_+}=\mathbb{R}^8$. 

  We can show $\cv{U_-}=\mathbb{R}^8$ in a similar way 
by exchanging the roles of $x_1,\ x_2$ and $x_3,\ x_4$. 

\noindent Case~(ii): $c\ne 0$.\ 
 Now, $F_{23}={x_1}^2+{x_2}^2-{x_3}^2-{x_4}^2-c$. 
 We may assume $c>0$. 
 First, we show $\cv{U_+}=\mathbb{R}^8$. 
 With an arbitrary point $p=(p_1,p_2,\dots,p_8)\in\mathbb{R}^8\setminus U_+$, 
we show $p\in\cv{U_+}$. 
 From the assumption, we have 
\begin{equation*}
  {p_1}^2+{p_2}^2-{p_3}^2-{p_4}^2-c\le 0. 
\end{equation*}
 Then by setting 
\begin{align*}
  p_+^1&:=\lft(\sqrt{{p_3}^2+{p_4}^2-{p_2}^2+c+\epsilon},p_2,p_3,p_4,
  \dots,p_8\rgt), \\ 
  p_+^2&:=\lft(-\sqrt{{p_3}^2+{p_4}^2-{p_2}^2+c+\epsilon},p_2,p_3,p_4,
  \dots,p_8\rgt),
\end{align*}
for $\epsilon>0$, we have two points $p_+^1,p_+^2\in U_+$ 
(see Figure~\ref{fig:cent-h+}). 
%
%
\begin{figure}[htb]
  \centering

\definecolor{grey}{RGB}{128,128,128}

\def \globalscale {0.51000000}
\begin{tikzpicture}[y=1cm, x=1cm, yscale=\globalscale,xscale=\globalscale, every node/.append style={scale=\globalscale}, inner sep=0pt, outer sep=0pt]
  \path[fill=grey,opacity=0.5,line cap=butt,line join=miter,line 
  width=0.0cm,miter limit=4.0] (11.0, 27.7) -- (11.0, 21.7) -- (17.0, 21.7) -- 
  (17.0, 27.7) -- cycle;

  \path[draw=black,fill=white,line width=0.0cm] (14.0, 24.7) ellipse (2.0cm and 
  2.0cm);

  \path[draw=black,even odd rule,line cap=butt,line join=miter,line width=0.0cm]
   (5.0, 27.8) -- (5.0, 21.7);

  \path[draw=black,even odd rule,line cap=butt,line join=miter,line 
  width=0.0cm,dash pattern=on 0.1cm off 0.1cm] (2.0, 21.7) -- (8.0, 27.7);

  \path[draw=black,even odd rule,line cap=butt,line join=miter,line 
  width=0.0cm,dash pattern=on 0.1cm off 0.1cm] (2.0, 27.7) -- (8.0, 21.7);

  \path[draw=black,even odd rule,line cap=butt,line join=miter,line width=0.0cm]
   (2.0, 24.7) -- (8.1, 24.7);

  \path[draw=black,line cap=butt,line join=miter,line width=0.0cm,miter 
  limit=4.0] (4.9, 27.7) -- (5.0, 27.8) -- (5.1, 27.7);

  \path[draw=black,line cap=butt,line join=miter,line width=0.0cm,miter 
  limit=4.0] (8.0, 24.8) -- (8.1, 24.7) -- (8.0, 24.6);

  \node at (8.3, 24.3) {\Large$x_1,\ x_2$};

  \node at (4.6, 28.0) {\Large $x_3,\ x_4$};

  \path[draw=black,even odd rule,line cap=butt,line join=miter,line width=0.0cm]
   (14.0, 27.8) -- (14.0, 21.7);

  \path[draw=black,even odd rule,line cap=butt,line join=miter,line width=0.0cm]
   (11.0, 24.7) -- (17.1, 24.7);

  \path[draw=black,line cap=butt,line join=miter,line width=0.0cm,miter 
  limit=4.0] (13.9, 27.7) -- (14.0, 27.8) -- (14.1, 27.7);

  \path[draw=black,line cap=butt,line join=miter,line width=0.0cm,miter 
  limit=4.0] (17.0, 24.8) -- (17.1, 24.7) -- (17.0, 24.6);

  \node at (17.3, 24.3) {\Large $x_1$};

  \node at (13.6, 28.0) {\Large $x_2$};

  \path[draw=black,line cap=butt,line join=miter,line width=0.0cm,miter 
  limit=4.0,dash pattern=on 0.1cm off 0.1cm] (2.7, 26.2) -- (7.3, 26.2);

  \path[fill=black,line width=0.0cm,dash pattern=on 0.1cm off 0.1cm] (12.5, 
  26.2) ellipse (0.1cm and 0.1cm) node [left, xshift=-0.2cm] {\Large $p_+^2$};

  \path[fill=black,line width=0.0cm,dash pattern=on 0.1cm off 0.1cm] (5.8, 26.2)
   ellipse (0.1cm and 0.1cm) node[above, yshift=0.2cm] {\Large $p$};

  \path[draw=black,line cap=butt,line join=miter,line width=0.0cm,miter 
  limit=4.0,dash pattern=on 0.1cm off 0.1cm] (12.5, 26.2) -- (15.5, 26.2);

  \path[fill=black,line width=0.0cm,dash pattern=on 0.1cm off 0.1cm] (15.5, 
  26.2) ellipse (0.1cm and 0.1cm) node [right, xshift=0.2cm] {\Large $p_+^1$};

  \path[fill=black,line width=0.0cm,dash pattern=on 0.1cm off 0.1cm] (14.7, 
  26.2) ellipse (0.1cm and 0.1cm) node [above, yshift=0.2cm] {\Large $p$};

  \path[draw=black,fill=grey,opacity=0.5,line cap=butt,line join=miter,line 
  width=0.0cm,miter limit=4.0] (2.0, 27.4).. controls (4.4, 24.8) and (4.0, 
  24.7) .. (4.0, 24.7).. controls (4.0, 24.7) and (4.4, 24.6) .. (2.0, 22.0);

  \path[draw=black,fill=grey,opacity=0.5,line cap=butt,line join=miter,line 
  width=0.0cm,miter limit=4.0] (8.0, 27.4).. controls (5.6, 24.8) and (6.0, 
  24.7) .. (6.0, 24.7).. controls (6.0, 24.7) and (5.6, 24.6) .. (8.0, 22.0);

  \node at (4.5, 23) {\Huge $\ominus$}; 
  \node at (7, 25.2) {\Huge $\oplus$}; 
  \node at (13.2, 24) {\Huge $\ominus$}; 
  \node at (16, 23) {\Huge $\oplus$};

\end{tikzpicture}
  \caption{hyperboloid type ($+$)}
  \label{fig:cent-h+}
\end{figure}
 The point $p\in \mathbb{R}^8\setminus U_+$ 
lies on the line segment $p_+^1p_+^2$, 
and divides it in the ratio 
$\sqrt{{p_3}^2+{p_4}^2-{p_2}^2+c+\epsilon}-p_1 : 
p_1+\sqrt{{p_3}^2+{p_4}^2-{p_2}^2+c+\epsilon}$ internally. 
 In other words, $p\in\cv{U_+}$. 
 Then it follows $\cv{U_+}=\mathbb{R}^8$. 

  Next, we show $\cv{U_-}=\mathbb{R}^8$. 
 With an arbitrary point $p=(p_1,p_2,\dots,p_8)\in\mathbb{R}^8\setminus U_-$, 
we show $p\in\cv{U_-}$. 
 From the assumption, we have 
\begin{equation*}
  {p_1}^2+{p_2}^2-{p_3}^2-{p_4}^2-c\ge 0. 
\end{equation*}
 Then by setting 
\begin{align*}
  p_-^1&:=\lft(p_1,p_2,\sqrt{{p_1}^2+{p_2}^2-{p_4}^2-c+\epsilon},p_4,
  \dots,p_8\rgt), \\ 
  p_-^2&:=\lft(p_1,p_2,-\sqrt{{p_1}^2+{p_2}^2-{p_4}^2-c+\epsilon},p_4,
  \dots,p_8\rgt),
\end{align*}
for $\epsilon>0$, we have two points $p_-^1,p_-^2\in U_-$ 
(see Figure~\ref{fig:cent-h-}). 
%
%
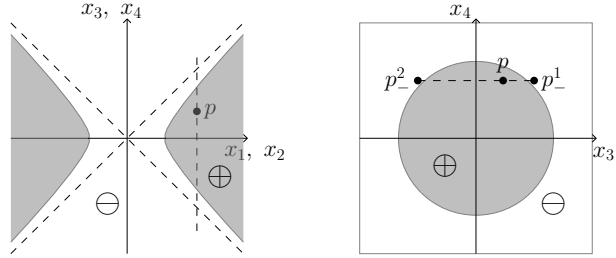
\begin{figure}[htb]
  \centering

\definecolor{grey}{RGB}{128,128,128}

\def \globalscale {0.51000000}
\begin{tikzpicture}[y=1cm, x=1cm, yscale=\globalscale,xscale=\globalscale, every node/.append style={scale=\globalscale}, inner sep=0pt, outer sep=0pt]
  \path[draw=black,opacity=0.5,line cap=butt,line join=miter,line 
  width=0.0cm,miter limit=4.0,dash pattern=on 0.0cm off 0.0cm] (11.0, 27.7) -- 
  (11.0, 21.7) -- (17.0, 21.7) -- (17.0, 27.7) -- cycle;

  \path[draw=black,fill=grey,opacity=0.5,line width=0.0cm] (14.0, 24.7) ellipse 
  (2.0cm and 2.0cm);

  \path[draw=black,even odd rule,line cap=butt,line join=miter,line width=0.0cm]
   (5.0, 27.8) -- (5.0, 21.7);

  \path[draw=black,even odd rule,line cap=butt,line join=miter,line 
  width=0.0cm,dash pattern=on 0.1cm off 0.1cm] (2.0, 21.7) -- (8.0, 27.7);

  \path[draw=black,even odd rule,line cap=butt,line join=miter,line 
  width=0.0cm,dash pattern=on 0.1cm off 0.1cm] (2.0, 27.7) -- (8.0, 21.7);

  \path[draw=black,even odd rule,line cap=butt,line join=miter,line width=0.0cm]
   (2.0, 24.7) -- (8.1, 24.7);

  \path[draw=black,line cap=butt,line join=miter,line width=0.0cm,miter 
  limit=4.0] (4.9, 27.7) -- (5.0, 27.8) -- (5.1, 27.7);

  \path[draw=black,line cap=butt,line join=miter,line width=0.0cm,miter 
  limit=4.0] (8.0, 24.8) -- (8.1, 24.7) -- (8.0, 24.6);

  \node at (8.3, 24.3) {\Large $x_1,\ x_2$};

  \node at (4.6, 28.0) {\Large $x_3,\ x_4$};

  \path[draw=black,even odd rule,line cap=butt,line join=miter,line width=0.0cm]
   (14.0, 27.8) -- (14.0, 21.7);

  \path[draw=black,even odd rule,line cap=butt,line join=miter,line width=0.0cm]
   (11.0, 24.7) -- (17.1, 24.7);

  \path[draw=black,line cap=butt,line join=miter,line width=0.0cm,miter 
  limit=4.0] (13.9, 27.7) -- (14.0, 27.8) -- (14.1, 27.7);

  \path[draw=black,line cap=butt,line join=miter,line width=0.0cm,miter 
  limit=4.0] (17.0, 24.8) -- (17.1, 24.7) -- (17.0, 24.6);

  \node at (17.3, 24.3) {\Large $x_3$};

  \node at (13.6, 28.0) {\Large $x_4$};

  \path[draw=black,line cap=butt,line join=miter,line width=0.0cm,miter 
  limit=4.0,dash pattern=on 0.1cm off 0.1cm] (6.8, 26.8) -- (6.8, 22.2);

  \path[fill=black,line width=0.0cm,dash pattern=on 0.1cm off 0.1cm] (12.5, 
  26.2) ellipse (0.1cm and 0.1cm) node [left, xshift=-0.2cm] {\Large $p_-^2$};

  \path[fill=black,line width=0.0cm,dash pattern=on 0.1cm off 0.1cm] (6.8, 25.4)
   ellipse (0.1cm and 0.1cm) node [right, xshift=0.2cm] {\Large $p$};

  \path[draw=black,line cap=butt,line join=miter,line width=0.0cm,miter 
  limit=4.0,dash pattern=on 0.1cm off 0.1cm] (12.5, 26.2) -- (15.5, 26.2);

  \path[fill=black,line width=0.0cm,dash pattern=on 0.1cm off 0.1cm] (15.5, 
  26.2) ellipse (0.1cm and 0.1cm) node [right, xshift=0.2cm] {\Large $p_-^1$};

  \path[fill=black,line width=0.0cm,dash pattern=on 0.1cm off 0.1cm] (14.7, 
  26.2) ellipse (0.1cm and 0.1cm) node [above, yshift=0.2cm] {\Large $p$};

  \path[draw=black,fill=grey,opacity=0.5,line cap=butt,line join=miter,line 
  width=0.0cm,miter limit=4.0] (2.0, 27.4).. controls (4.4, 24.8) and (4.0, 
  24.7) .. (4.0, 24.7).. controls (4.0, 24.7) and (4.4, 24.6) .. (2.0, 22.0);

  \path[draw=black,fill=grey,opacity=0.5,line cap=butt,line join=miter,line 
  width=0.0cm,miter limit=4.0] (8.0, 27.4).. controls (5.6, 24.8) and (6.0, 
  24.7) .. (6.0, 24.7).. controls (6.0, 24.7) and (5.6, 24.6) .. (8.0, 22.0);

  \node at (4.5, 23) {\Huge $\ominus$}; 
  \node at (7.4, 23.7) {\Huge $\oplus$}; 
  \node at (13.2, 24) {\Huge $\oplus$}; 
  \node at (16, 23) {\Huge $\ominus$};

\end{tikzpicture}
  \caption{hyperboloid type ($-$)}
  \label{fig:cent-h-}
\end{figure}
 The point $p\in \mathbb{R}^8\setminus U_-$ 
lies on the line segment $p_-^1p_-^2$, 
and divides it in the ratio 
$\sqrt{{p_3}^2+{p_4}^2-{p_2}^2+c+\epsilon}-p_3 : 
p_3+\sqrt{{p_3}^2+{p_4}^2-{p_2}^2+c+\epsilon}$ internally. 
 In other words, $p\in\cv{U_-}$. 
 Then it follows $\cv{U_-}=\mathbb{R}^8$.

\noindent
\underline{(2)\ non-central quadrics.}\quad 
In $\mathbb{R}^8$, the function $F_{23}$ 
is reduced to $F_{23}={x_1}^2+{x_2}^2-{x_3}^2-{x_4}^2+x_5$, 
the ``hyperbolic paraboloid'' type. 
 We abuse the same symbol $F_{23}$ for this normal form. 
 The complement of the hypersurface $\{F_{23}=0\}$ is divided 
into the following two path-connected components: 
\begin{equation*}
  U_\pm=\lft\{(x_1,\dots,x_8)\in\mathbb{R}^8\mid 
  {x_1}^2+{x_2}^2-{x_3}^2-{x_4}^2+x_5\gtrless 0\rgt\}. 
\end{equation*}
 We will show that $\cv{U_\pm}=\mathbb{R}^8$. 

  First, we show $\cv{U_+}=\mathbb{R}^8$. 
 With an arbitrary point $p=(p_1,p_2,\dots,p_8)\in\mathbb{R}^8\setminus U_+$, 
we show $p\in\cv{U_+}$. 
 From the assumption, we have 
\begin{equation*}
  {p_1}^2+{p_2}^2-{p_3}^2-{p_4}^2+p_5\le 0. 
\end{equation*}
Then by setting 
\begin{align*}
  p_+^1&:=\lft(\sqrt{{p_3}^2+{p_4}^2-{p_2}^2-p_5+\epsilon},p_2,p_3,p_4,\dots,
  p_8\rgt), \\ 
  p_+^2&:=\lft(-\sqrt{{p_3}^2+{p_4}^2-{p_2}^2-p_5+\epsilon},p_2,p_3,p_4,\dots, 
  p_8\rgt),
\end{align*}
with $\epsilon>0$, we have two points $p_+^1,p_+^2\in U_+$ 
(see Figure~\ref{fig:non-central}). 
%
%
\begin{figure}[htb]
  \centering

\definecolor{grey}{RGB}{128,128,128}
\definecolor{cb3b3b3}{RGB}{179,179,179}

\def \globalscale {0.5000000}
\begin{tikzpicture}[y=1cm, x=1cm, yscale=\globalscale,xscale=\globalscale, every node/.append style={scale=\globalscale}, inner sep=0pt, outer sep=0pt]
  \begin{scope}[shift={(0.0, 1.6)}]
    \path[fill=grey,opacity=0.5,line width=0.0cm,dash pattern=on 0.3cm off 0.1cm
   on 0.0cm off 0.1cm,rounded corners=0.0cm] (16.5, 27.2) rectangle (22.5, 21.2);

    \path[draw=black,fill=white,line cap=butt,line join=miter,line 
  width=0.0cm,miter limit=4.0] (22.2, 27.2).. controls (19.6, 24.8) and (19.5, 
  25.2) .. (19.5, 25.2).. controls (19.5, 25.2) and (19.4, 24.8) .. (16.8, 27.2);

    \path[draw=black,fill=white,line cap=butt,line join=miter,line 
  width=0.0cm,miter limit=4.0] (22.2, 21.2).. controls (19.6, 23.6) and (19.5, 
  23.2) .. (19.5, 23.2).. controls (19.5, 23.2) and (19.4, 23.6) .. (16.8, 21.2);

    \path[fill=grey,opacity=0.5,line cap=butt,line join=miter,line 
  width=0.0cm,miter limit=4.0] (25.5, 27.2) -- (25.5, 21.2) -- (31.5, 21.2) -- 
  (31.5, 27.2) -- cycle;

    \path[draw=black,fill=white,line width=0.0cm] (28.5, 24.2) ellipse (2.0cm 
  and 2.0cm);

    \path[draw=black,even odd rule,line cap=butt,line join=miter,line 
  width=0.0cm] (3.5, 27.3) -- (3.5, 21.2);

    \path[draw=black,even odd rule,line cap=butt,line join=miter,line 
  width=0.0cm,dash pattern=on 0.1cm off 0.1cm] (0.5, 21.2) -- (6.5, 27.2);

    \path[draw=black,even odd rule,line cap=butt,line join=miter,line 
  width=0.0cm,dash pattern=on 0.1cm off 0.1cm] (0.5, 27.2) -- (6.5, 21.2);

    \path[draw=black,even odd rule,line cap=butt,line join=miter,line 
  width=0.0cm] (0.5, 24.2) -- (6.6, 24.2);

    \path[draw=black,line cap=butt,line join=miter,line width=0.0cm,miter 
  limit=4.0] (3.4, 27.2) -- (3.5, 27.3) -- (3.6, 27.2);

    \path[draw=black,line cap=butt,line join=miter,line width=0.0cm,miter 
  limit=4.0] (6.5, 24.3) -- (6.6, 24.2) -- (6.5, 24.1);

    \node at (6.8, 23.8) {\Large $x_1,x_2$};

    \node at (3.1, 27.5) {\Large $x_3,x_4$};

    \path[draw=black,even odd rule,line cap=butt,line join=miter,line 
  width=0.0cm] (28.5, 27.3) -- (28.5, 21.2);

    \path[draw=black,even odd rule,line cap=butt,line join=miter,line 
  width=0.0cm] (25.5, 24.2) -- (31.6, 24.2);

    \path[draw=black,line cap=butt,line join=miter,line width=0.0cm,miter 
  limit=4.0] (28.4, 27.2) -- (28.5, 27.3) -- (28.6, 27.2);

    \path[draw=black,line cap=butt,line join=miter,line width=0.0cm,miter 
  limit=4.0] (31.5, 24.3) -- (31.6, 24.2) -- (31.5, 24.1);

    \node at (31.8, 23.8)  {\Large $x_1$};

    \node at (28.1, 27.5) {\Large $x_2$};

    \path[draw=black,line cap=butt,line join=miter,line width=0.0cm,miter 
  limit=4.0,dash pattern=on 0.1cm off 0.1cm] (1.2, 25.7) -- (5.8, 25.7);

    \path[fill=black,line width=0.0cm,dash pattern=on 0.1cm off 0.1cm] (27.0, 
  25.7) ellipse (0.1cm and 0.1cm) node [left, xshift=-0.2cm] {\Large $p_+^2$};

    \path[fill=black,line width=0.0cm,dash pattern=on 0.1cm off 0.1cm] (4.3, 
  25.7) ellipse (0.1cm and 0.1cm) node [above, yshift=0.2cm] {\Large $p$};

    \path[draw=black,line cap=butt,line join=miter,line width=0.0cm,miter 
  limit=4.0,dash pattern=on 0.1cm off 0.1cm] (27.0, 25.7) -- (30.0, 25.7);

    \path[fill=black,line width=0.0cm,dash pattern=on 0.1cm off 0.1cm] (30.0, 
  25.7) ellipse (0.1cm and 0.1cm) node [right, xshift=0.2cm] {\Large $p_+^1$};

    \path[fill=black,line width=0.0cm,dash pattern=on 0.1cm off 0.1cm] (29.2, 
  25.7) ellipse (0.1cm and 0.1cm) node [above, yshift=0.2cm] {\Large $p$};

    \path[draw=black,fill=grey,opacity=0.5,line cap=butt,line join=miter,line 
  width=0.0cm,miter limit=4.0] (0.5, 26.9).. controls (2.9, 24.3) and (2.5, 
  24.2) .. (2.5, 24.2).. controls (2.5, 24.2) and (2.9, 24.1) .. (0.5, 21.5);

    \path[draw=black,fill=grey,opacity=0.5,line cap=butt,line join=miter,line 
  width=0.0cm,miter limit=4.0] (6.5, 26.9).. controls (4.1, 24.3) and (4.5, 
  24.2) .. (4.5, 24.2).. controls (4.5, 24.2) and (4.1, 24.1) .. (6.5, 21.5);

    \path[fill=cb3b3b3,even odd rule,draw opacity=0.0,line cap=butt,line 
  join=miter,line width=0.0cm] (14.5, 27.2) -- (11.5, 24.2) -- (14.5, 21.2);

    \path[fill=cb3b3b3,even odd rule,draw opacity=0.0,line cap=butt,line 
  join=miter,line width=0.0cm] (8.5, 27.2) -- (11.5, 24.2) -- (8.5, 21.2);

    \path[draw=black,even odd rule,line cap=butt,line join=miter,line 
  width=0.0cm] (11.5, 27.3) -- (11.5, 21.2);

    \path[draw=black,even odd rule,line cap=butt,line join=miter,line 
  width=0.0cm] (8.5, 21.2) -- (14.5, 27.2);

    \path[draw=black,even odd rule,line cap=butt,line join=miter,line 
  width=0.0cm] (8.5, 27.2) -- (14.5, 21.2);

    \path[draw=black,even odd rule,line cap=butt,line join=miter,line 
  width=0.0cm] (8.5, 24.2) -- (14.6, 24.2);

    \path[draw=black,line cap=butt,line join=miter,line width=0.0cm,miter 
  limit=4.0] (11.4, 27.2) -- (11.5, 27.3) -- (11.6, 27.2);

    \path[draw=black,line cap=butt,line join=miter,line width=0.0cm,miter 
  limit=4.0] (14.5, 24.3) -- (14.6, 24.2) -- (14.5, 24.1);

    \node at (14.8, 23.8) {\Large $x_1,x_2$};

    \node at (11.1, 27.5) {\Large $x_3,x_4$};

    \path[draw=black,line cap=butt,line join=miter,line width=0.0cm,miter 
  limit=4.0,dash pattern=on 0.1cm off 0.1cm] (9.7, 25.7) -- (13.3, 25.7);

    \path[fill=black,line width=0.0cm,dash pattern=on 0.1cm off 0.1cm] (12.2, 
  25.7) ellipse (0.1cm and 0.1cm) node [above, yshift=0.2cm] {\Large $p$};

    \path[draw=black,even odd rule,line cap=butt,line join=miter,line 
  width=0.0cm] (22.6, 24.2) -- (16.5, 24.2);

    \path[draw=black,even odd rule,line cap=butt,line join=miter,line 
  width=0.0cm,dash pattern=on 0.1cm off 0.1cm] (16.5, 27.2) -- (22.5, 21.2);

    \path[draw=black,even odd rule,line cap=butt,line join=miter,line 
  width=0.0cm,dash pattern=on 0.1cm off 0.1cm] (22.5, 27.2) -- (16.5, 21.2);

    \path[draw=black,even odd rule,line cap=butt,line join=miter,line 
  width=0.0cm] (19.5, 27.3) -- (19.5, 21.2);

    \path[draw=black,line cap=butt,line join=miter,line width=0.0cm,miter 
  limit=4.0] (22.5, 24.3) -- (22.6, 24.2) -- (22.5, 24.1);

    \path[draw=black,line cap=butt,line join=miter,line width=0.0cm,miter 
  limit=4.0] (19.6, 27.2) -- (19.5, 27.3) -- (19.4, 27.2);

    \node at (19.1, 27.5) {\Large $x_3,x_4$};

    \node at (22.8, 23.8) {\Large $x_1,x_2$};

    \path[draw=black,line cap=butt,line join=miter,line width=0.0cm,miter 
  limit=4.0,dash pattern=on 0.1cm off 0.1cm] (18.1, 25.8) -- (20.8, 25.8);

    \path[fill=black,line width=0.0cm,dash pattern=on 0.1cm off 0.1cm,rotate 
  around={-90.0:(0.0, 29.7)}] (3.9, 49.8) ellipse (0.1cm and 0.1cm) node [above, yshift=0.2cm] {\Large $p$};

    \path[draw=black,fill=black,line cap=butt,line join=miter,line 
  width=0.0cm,miter limit=4.0,dash pattern=on 0.3cm off 0.1cm on 0.0cm off 
  0.1cm] (24.0, 27.2) -- (24.0, 21.2);

  \end{scope}

  \node at (3, 24) {\Huge $\ominus$}; 
  \node at (5.5, 25) {\Huge $\oplus$}; 
  \node at (11, 24) {\Huge $\ominus$}; 
  \node at (13.5, 25) {\Huge $\oplus$}; 
  \node at (19, 23.7) {\Huge $\ominus$}; 
  \node at (21.5, 25) {\Huge $\oplus$}; 
  \node at (27.6, 25) {\Huge $\ominus$}; 
  \node at (30.3, 23.7) {\Huge $\oplus$}; 

  \node at (3.4, 22.2) {\LARGE $p_5<0$}; 
  \node at (11.4, 22.2) {\LARGE $p_5=0$}; 
  \node at (19.4, 22.2) {\LARGE $p_5>0$}; 

\end{tikzpicture}
  \caption{non-central case}
  \label{fig:non-central}
\end{figure}
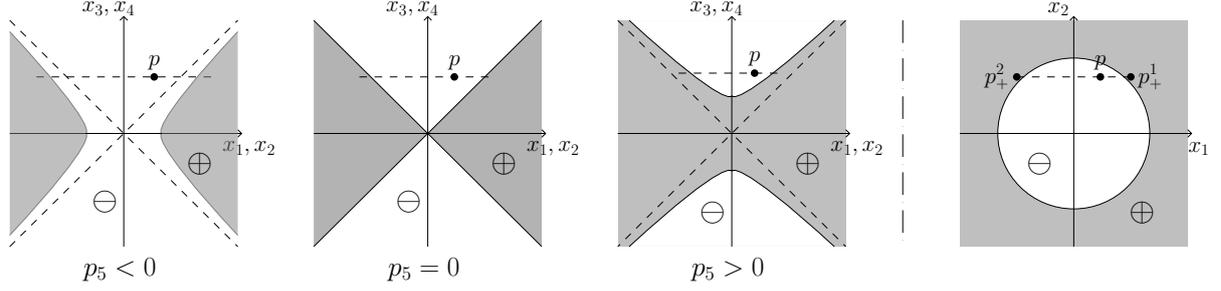 
 The point $p\in \mathbb{R}^8\setminus U_+$ 
lies on the line segment $p_+^1p_+^2$, 
and divides it in the ratio 
$\sqrt{{p_3}^2+{p_4}^2-{p_2}^2-p_5+\epsilon}-p_1 : 
p_1+\sqrt{{p_3}^2+{p_4}^2-{p_2}^2-p_5+\epsilon}$ internally. 
 In other words, $p\in\cv{U_+}$. 
 Then it follows $\cv{U_+}=\mathbb{R}^8$. 

  We can show $\cv{U_-}=\mathbb{R}^8$ in a similar way. 
 With an arbitrary point $p=(p_1,p_2,\dots,p_8)\in\mathbb{R}^8\setminus U_-$, 
we show $p\in\cv{U_-}$. 
 From the assumption, we have 
\begin{equation*}
  {p_1}^2+{p_2}^2-{p_3}^2-{p_4}^2+p_5\ge 0. 
\end{equation*}
Then by setting 
\begin{align*}
  p_-^1&:=\lft(p_1,p_2,\sqrt{{p_1}^2+{p_2}^2-{p_4}^2+p_5+\epsilon},p_4,\dots,
  p_8\rgt), \\ 
  p_-^2&:=\lft(p_1,p_2,-\sqrt{{p_1}^2+{p_2}^2-{p_4}^2+p_5+\epsilon},p_4,\dots, 
  p_8\rgt),
\end{align*}
with $\epsilon>0$, we have two points $p_-^1,p_-^2\in U_-$. 
 The point $p\in \mathbb{R}^8\setminus U_-$ 
lies on the line segment $\overline{p_-^1p_-^2}$, 
and divides it in the ratio 
$\sqrt{{p_1}^2+{p_2}^2-{p_4}^2+p_5+\epsilon}-p_1 : 
p_1+\sqrt{{p_1}^2+{p_2}^2-{p_4}^2+p_5+\epsilon}$ internally. 
 In other words, $p\in\cv{U_-}$. 
 Then it follows $\cv{U_-}=\mathbb{R}^8$. 

  Then we have proved that the differential relation 
$\tilde{\mathcal{R}}\subset\ojet{X_1}$ is ample in Case~2. 

  This completes the proof of Proposition~\ref{prop:essence_(3,5)}.
\end{proof}

\subsection{Proof of Theorem~\ref{thm:key_(3,5)}}\label{sec:proof_(3,5)}
  In this subsection, we show Theorem~\ref{thm:key_(3,5)}. 
 For the proof, 
we apply Gromov's convex integration method, Theorem~\ref{thm:HP_apl_dr}, 
and the \textit{h}-principle for differential sections, 
Theorem~\ref{thm:HP_D-secs}. 
 By Proposition~\ref{prop:essence_(3,5)}, 
we establish Theorem~\ref{thm:key_(3,5)}. 

\begin{proof}[Proof of Theorem~\textup{\ref{thm:key_(3,5)}}]
 Let $M$ be a $5$-dimensional manifold. 
 Suppose that there exits an almost $(3,5)$-distribution 
$(\D,\{\omega_1,\omega_2\})\in\almtf$. 
 Let $\alpha_i$, $i=1,2,3$, denote a local coframing of $\D$. 
 We prove the theorem in the following two steps. 

  First, we apply Proposition~\ref{prop:essence_(3,5)} and 
Gromov's \textit{h}-principle for ample differential relations 
(Theorem~\ref{thm:HP_apl_dr}). 
 Let the vector bundle $X_1:=\bigoplus^{2}T^\ast M$ over $M$ 
be as in Section~\ref{sec:key_(3,5)}. 
 Now, we take the differential relation 
$\tilde{\mathcal{R}}\subset\ojet{X_1}$ to be considered as 
in Equation~\eqref{eq:def_dr_35}. 
 Then, from Proposition~\ref{prop:essence_(3,5)}, 
$\tilde{\mathcal{R}}\subset\ojet{X_1}$ is ample. 
 This implies, according to Gromov's \textit{h}-principle 
for ample differential relations (Theorem~\ref{thm:HP_apl_dr}), 
that the differential relation $\tilde{\mathcal{R}}\subset\ojet{X_1}$ 
satisfies the \textit{h}-principle and the one-parametric \textit{h}-principle. 

  Next, we discuss the \textit{h}-principles 
for first order linear differential operators. 
 For the vector bundle $X_1$ above, let the vector bundle 
$Z_1:=\bigoplus^{2}\lft(T^\ast M\oplus\lft(\bigwedge^2T^\ast M\rgt)\rgt)$ 
over $M$ be as in Section~\ref{sec:key_(3,5)}, 
and the linear differential operator 
$\mathcal{F}_1\colon\sect{X_1}\to\sect{Z_1}$ 
be as in Equation~\eqref{eq:def_f_(3,5)}. 
 Note that the symbol $F_1\colon X_1^{(1)}\to Z_1$ of $\mathcal{F}_1$ 
is fiberwise epimorphic. 
 Recall that the differential relation $\tilde{\mathcal{R}}\subset\ojet{X_1}$ 
is defined as 
$\tilde{\mathcal{R}}=\ojet{X_1}\setminus\tilde{\Sigma}
=\ojet{X_1}\setminus {F_1}^{-1}(\tilde{S})$ 
for the singular locus $\tilde{S}\subset Z_1$ for the almost $(3,5)$ condition 
defined in Equation~\eqref{eq:def_s_(3,5)}. 
 As proved in the previous paragraph, 
the differential relation $\tilde{\mathcal{R}}\subset\ojet{X_1}$
satisfies the \textit{h}-principle and the one-parametric \textit{h}-principle. 
 Then, from the \textit{h}-principle 
for first order linear differential operators (Theorem~\ref{thm:HP_D-secs}), 
we obtain that the \textit{h}-principle 
and the one-parametric \textit{h}-principle hold for the inclusion 
\begin{equation*}
  \secc_{\mathcal{F}_1}\lft(Z_1\setminus \tilde{S}\rgt)
  \hookrightarrow 
  \sect{Z_1\setminus \tilde{S}}, 
\end{equation*}
where
\begin{align*}
  \secc_{\mathcal{F}_1}\lft(Z_1\setminus \tilde{S}\rgt)
  :=
  \lft\{s\in \sect{Z_1\setminus \tilde{S}}
  \;\lft|\; 
  \begin{aligned}
    &s=\mathcal{F}_1(t)\\ 
    &\text{for some}\ t\in\sect{X_1}
  \end{aligned}
  \rgt.\rgt\}
\end{align*}
is the space of $\mathcal{F}_1$-sections. 

  The facts established above, namely, 
the \textit{h}-principle and the one-parametric \textit{h}-principle, 
imply Theorem~\ref{thm:key_(3,5)}. 
 In fact, the set $\sect{Z_1\setminus \tilde{S}}$ 
corresponds to the set of coframings of the almost $(3,5)$-distributions, 
and the set $\secc_{\mathcal{F}_1}\lft(Z_1\setminus \tilde{S}\rgt)$ 
corresponds to the set of coframings 
of the genuine $(3,5)$-distributions. 
 In other words, $\sect{Z_1\setminus \tilde{S}}$ corresponds to $\almtf$, 
and $\secc_{\mathcal{F}_1}\lft(Z_1\setminus \tilde{S}\rgt)$ 
corresponds to $\Omega_{(3,5)}(M)$ 
or $\bar{\Omega}_{(3,5)}(M)\subset\almtf$. 
 Then the \textit{h}-principle implies the claim~(1) 
of Theorem~\ref{thm:key_(3,5)}. 
 Indeed, if there exists an almost $(3,5)$-distribution, 
the \textit{h}-principle guarantees the existence 
of a genuine $(3,5)$-distribution homotopic to the almost $(3,5)$-distribution. 
 The converse is clear. 
 On the other hand, the one-parametric \textit{h}-principle 
implies the claim~(2) of Theorem~\ref{thm:key_(3,5)}. 
 Indeed, the one-parametric \textit{h}-principle guarantees 
that a path in $\sect{Z_1\setminus \tilde{S}}$ connecting two points 
in $\secc_{\mathcal{F}_1}\lft(Z_1\setminus \tilde{S}\rgt)$ can be deformed, 
keeping both ends, 
so that the whole path lies 
in $\secc_{\mathcal{F}_1}\lft(Z_1\setminus \tilde{S}\rgt)$. 
 This means a deformation of the path of almost $(3,5)$-distributions. 

  This completes the proof of Theorem~\ref{thm:key_(3,5)}. 
\end{proof}

\section{Key theorem and corresponding differential relation}
\label{sec:diff-rel}
  We now turn to the proofs of the main theorems in this paper. 
 First, we reformulate the problems in terms of the \emph{h}-principle 
in Section~\ref{sec:keythm}. 
 Then we determine the differential relation and the differential operator 
to be considered in Section~\ref{sec:def-diffrel}. 
\subsection{Reformulation from the viewpoint of the \textit{h}-principles}
\label{sec:keythm}
  We reformulate the problems from the viewpoint of the \textit{h}-principle 
into Theorem~\ref{thm:key}. 
 The structure dealt in Theorems~\ref{thma}, \ref{thmb}, and~\ref{thmc}  
is the Cartan $(2,3,5)$-distributions. 
 For such structures, the formal structures are the almost Cartan structures 
(see Section~\ref{sec:distr} for definitions of such distributions). 
 In order to describe the reformulation, 
we introduce the sets of such distributions. 
 Let $M$ be a possibly closed $5$-dimensional manifold. 
 Let $\Omega_{(2,3,5)}(M)$ denote 
the set of the genuine Cartan $(2,3,5)$-distributions on $M$, 
and $\almcrtn$ the set of almost Cartan structure on $M$. 
 As we mentioned after Definition~\ref{def:alm_cartan}, 
associated with any genuine Cartan $(2,3,5)$-distribution, 
there exists an almost Cartan structure. 
 Then setting $\bar{\Omega}_{(2,3,5)}(M)$ 
as the set of almost Cartan structures 
that are associated with genuine Cartan $(2,3,5)$-distributions, 
we have $\bar{\Omega}_{(2,3,5)}(M)\subset\almcrtn$. 
  Using the notion, the key theorem is formulated as follows. 
%
%
\begin{thrm}\label{thm:key}
Let $M$ be a possibly closed $5$-dimensional manifold. 

\noindent\textup{(1)}\  
 Let $(\D\subset\mathcal{E},\{\omega_1,\omega_2;\omega_3\})\in\almcrtn$ 
be an almost Cartan structure on $M$. 
 Then $(\D\subset,\{\omega_1,\omega_2;\omega_3\})$ is homotopic in $\almcrtn$ 
to an almost Cartan structure associated with 
a genuine Cartan $(2,3,5)$-distribution 
$\mathcal{E}\in\Omega_{(2,3,5)}(M)$ on $M$.


\noindent\textup{(2)}\  
 Let $\D_0,\D_1\in\Omega_{(2,3,5)}(M)$
be genuine Cartan $(2,3,5)$-distributions on $M$, 
and let $(\D_t\subset\mathcal{E}_t,\{\omega_1^t,\omega_2^t;\omega_3^t\})
\in\almcrtn$, $t\in[0,1]$, 
be a path of almost Cartan structures in $\almcrtn$ 
between the associated almost Cartan structures of $\D_0$ and $\D_1$. 
 Then $(\D_t\subset\mathcal{E}_t,\{\omega_1^t,\omega_2^t;\omega_3^t\})$ 
can be deformed in $\almcrtn$ 
to a path $(\tilde{\D}_t\subset{\tilde{\D}_t}^2,
\{[d\tilde{\alpha}_1^t],[d\tilde{\alpha}_2^t];[d\tilde{\alpha}_3^t]\})
\in\bar{\Omega}_{(2,3,5)}(M)$ of almost Cartan structures 
associated with genuine Cartan $(2,3,5)$-distributions 
$\tilde{\D}_t\in\Omega_{(2,3,5)}(M)$, 
while keeping the endpoints fixed: 
\begin{align*}
  (\D_i\subset\mathcal{E}_i,\{\omega_1^i,\omega_2^i;\omega_3^i\}) 
  &=(\D_i\subset{\D_i}^2,\{[d\alpha_1^i],[d\alpha_2^i];[d\alpha_3^i]\}) \\
  &=(\tilde{\D}_i\subset{\tilde{\D}_i}^2,
    \{[d\tilde{\alpha}_1^i],[d\tilde{\alpha}_2^i];[d\tilde{\alpha}_3^i]\}),
    \quad i=0,1.
\end{align*}
\end{thrm}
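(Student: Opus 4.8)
The plan is to follow the template of Section~\ref{sec:(3,5)-distributions}: encode the Cartan $(2,3,5)$-condition as an open differential relation, prove its ampleness, and then invoke Gromov's convex integration (Theorem~\ref{thm:HP_apl_dr}) together with the \textit{h}-principle for first order linear differential operators (Theorem~\ref{thm:HP_D-secs}). Since a Cartan distribution $\D$ is of rank~$2$, hence of corank~$3$, it is locally the common kernel of a triple $(\alpha_1,\alpha_2,\alpha_3)$ of $1$-forms; accordingly I would take the source bundle $X:=\bigoplus^3 T^\ast M$ and, recording the three exterior derivatives as well, the target bundle $Z:=\bigoplus^3\lft(T^\ast M\oplus\bigwedge^2 T^\ast M\rgt)$, together with the first order linear differential operator $\mathcal{F}\colon(\alpha_1,\alpha_2,\alpha_3)\mapsto(\alpha_1,\alpha_2,\alpha_3,d\alpha_1,d\alpha_2,d\alpha_3)$. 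Exactly as for $\mathcal{F}_1$ in Section~\ref{sec:key_(3,5)}, the symbol $F:=\symb\mathcal{F}$ is fiberwise epimorphic, so Theorem~\ref{thm:HP_D-secs} will be available once ampleness is established.

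Next I would cut out the singular locus $S\subset Z$ as the set of tuples $(\alpha_1,\alpha_2,\alpha_3,\omega_1,\omega_2,\omega_3)$ for which the Cartan condition of Proposition~\ref{prop:Cartanforms} fails when $d\alpha_i$ is read fiberwise as $\omega_i$; that is, the locus where either $\D^2$ does not attain rank~$3$ (equivalently the three $5$-forms $\alpha_1\wedge\alpha_2\wedge\alpha_3\wedge\omega_i$ vanish simultaneously) or the induced rank-$3$ distribution $\D^2$ is not of type~$(3,5)$. This is an intrinsically open condition on the $1$-jet. The almost Cartan structure satisfies these nondegeneracies—condition (1)-(ii) gives the rank-$3$ property and condition (2) the $(3,5)$ property, while (1)-(i) merely places the tuple in the adapted sublocus—and hence furnishes a section of $Z\setminus S$, i.e. a formal solution. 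Setting $\Sigma:=F^{-1}(S)\subset\ojet{X}$ and $\mathcal{R}:=\ojet{X}\setminus\Sigma$, I obtain the open differential relation whose \textit{h}-principles will give the theorem, and I note that the sub-condition governing $\D^2$ is exactly the $(3,5)$-relation $\tilde{\mathcal{R}}$ studied in Section~\ref{sec:key_(3,5)}, which is what links the present argument to Theorem~\ref{thm:key_(3,5)}.

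The heart of the matter, and the step I expect to be the main obstacle, is the ampleness of $\mathcal{R}$, the analogue of Proposition~\ref{prop:essence_(3,5)}. As there, I would fix a principal direction $P_1$ so that only the coordinates $z^i_{1l}$ vary and examine $\Sigma\cap P_1$. The singularity is now a union of two strata—where $\D^2$ drops rank and where $\D^2$ fails $(3,5)$—and one must verify, for every configuration of the triple (not merely the adapted ones produced by the almost Cartan structure), that this union meets each principal subspace in a thin set, or else that the complement is ample. On the stratum where $\rk\D^2=3$ the decisive condition is the $(3,5)$-nondegeneracy of $\D^2$, whose ampleness along principal subspaces is controlled by precisely the quadric normal-form analysis (central and non-central cases) carried out in the proof of Proposition~\ref{prop:essence_(3,5)}; the extra coordinates $z^3_{1l}$ attached to $\omega_3$ only enlarge the ambient subspace and so preserve the fullness of the convex hull, since $\cv{U}=P'$ forces $\cv{U\times\R^k}=P'\times\R^k$. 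The genuinely new work is to control the rank-dropping stratum and its interaction with the $(3,5)$-stratum, and to confirm that the combined complement still has convex hull equal to each $P_1$; I expect this to reduce, after the same linear-algebra normalizations as in Section~\ref{sec:key_(3,5)}, to showing that the rank-dropping locus has codimension at least~$2$ in each principal subspace.

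Finally I would assemble the conclusion. Ampleness of $\mathcal{R}$ yields, by Theorem~\ref{thm:HP_apl_dr}, both the \textit{h}-principle and the one-parametric \textit{h}-principle for $\mathcal{R}$; transporting these through Theorem~\ref{thm:HP_D-secs} gives the corresponding \textit{h}-principles for the inclusion $\dsec{Z\setminus S}\hookrightarrow\sect{Z\setminus S}$. Interpreting $\sect{Z\setminus S}$ as $\almcrtn$ and $\dsec{Z\setminus S}$ as $\bar{\Omega}_{(2,3,5)}(M)$, exactly as in the proof of Theorem~\ref{thm:key_(3,5)}, the plain \textit{h}-principle deforms any almost Cartan structure within $\almcrtn$ to one associated with a genuine Cartan $(2,3,5)$-distribution, giving statement~(1), while the one-parametric version deforms any path of almost Cartan structures joining two genuine ones into a path of genuine Cartan $(2,3,5)$-distributions rel endpoints, giving statement~(2).
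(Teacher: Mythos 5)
Your proposal follows essentially the same route as the paper: the same bundles $X=\bigoplus^3T^\ast M$ and $Z=\bigoplus^3\lft(T^\ast M\oplus\bigwedge^2T^\ast M\rgt)$, the same operator $\mathcal{F}$, the same two-strata singular locus (simultaneous vanishing of the three $5$-forms, plus failure of the $(3,5)$-condition for the induced rank-$3$ distribution after normalization), the same reduction of the second stratum to Proposition~\ref{prop:essence_(3,5)}, and the same assembly via Theorems~\ref{thm:HP_apl_dr} and~\ref{thm:HP_D-secs}. The one step you flag as the remaining obstacle---that the rank-dropping stratum meets each principal subspace in codimension at least~$2$---is resolved in the paper exactly as you predict, by observing that the relevant $3\times12$ coefficient matrix has three identical coefficient blocks and hence rank $0$ or $3$, never $1$.
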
 
\noindent
 This theorem is proved in the next section 
from the viewpoint of the \textit{h}-principles 
introduced in Section~\ref{sec:h-prin}. 
 Theorem~\ref{thma} and Theorem~\ref{thmc} follow this theorem 
(see Section~\ref{sec:pf_thms}).

\subsection{Differential relation for the problem}\label{sec:def-diffrel}
  In order to discuss the problem 
from the viewpoint of the \textit{h}-principle, 
we determine the relevant differential relation in this subsection. 

  In order to deal with tangent distributions, 
we recognize them by their coframings. 
 In addition, an almost Cartan structure is defined as a certain tuple 
consisting of a sequence of distributions and three $2$-forms. 
 In order to describe them, we first introduce two vector bundles 
and a first-order linear differential operator. 
 Let $M$ be a $5$-dimensional manifold, 
and $\D\subset TM$ a tangent distribution of rank~$2$ on $M$. 
 In other words it is of corank~$3$. 
 Then it is locally regarded as a kernel of three $1$-forms. 
 For the description, let $X\to M$ be the vector bundle over $M$ defined as
\begin{equation}\label{eq:Xbdl}
    X:=\bigoplus^3 T^\ast M\to M. 
\end{equation}
 Then a coframing of the distribution $\D$ is locally regarded as a section 
of this bundle $X$. 
 In addition, we recognize almost Cartan structures in a similar manner. 
 From Definition~\ref{def:alm_cartan}, an almost Cartan structure is defined 
as a tuple that consists of a sequence of tangent distributions 
of rank~$2$ and $3$, and three $2$-forms. 
 For the description, let $Z_1\to M$ be a vector bundle over $M$ defined as
\begin{equation}\label{eq:Zbdl}
  Z:=\bigoplus^3\lft( T^\ast M\oplus\bigwedge^2 T^\ast M\rgt)\to M. 
\end{equation}
 Then a tuple consists of a sequence of distributions and three $2$-forms 
is regarded as a section of this bundle $Z$. 
 Next we define the first-order linear differential operator 
connecting the two bundles introduced above. 
 Let $\mathcal{F}\colon\sect{X}\to\sect{Z}$ 
be the first order linear differential operator defined as  
\begin{equation}\label{eq:def_f}
  \mathcal{F}\colon
  (\alpha_1,\ \alpha_2,\ \alpha_3)\mapsto 
  (\alpha_1,\ \alpha_2,\ \alpha_3,\ d\alpha_1,\ d\alpha_2,\ d\alpha_3). 
\end{equation}
 We remark that the symbol $F\colon X^{(1)}\to Z$ 
of $\mathcal{F}$ is fiberwise epimorphic. 
 Then we can apply Theorem~\ref{thm:HP_D-secs} to this operator. 

  Using these descriptions, we formulate the problem as follows. 
 The hypotheses of Theorem~\ref{thm:key} are concerned 
with formal structures, that is, almost Cartan structures. 
 Then we find the singular locus in $Z$, and by Theorem~\ref{thm:HP_D-secs}, 
we determine the differential relation to consider 
in the $1$-jet space $\ojet{X}$.

  Next, we define the differential relation to be considered. 
 It is finally defined in the source side 
of $\mathcal{F}\colon \sect{X}\to\sect{Z}$. 
 The differential relation $\mathcal{R}\subset\ojet{X}$ 
is defined as the complement of the singularity $\Sigma\subset\ojet{X}$ 
that we define bellow. 
 Recall that the Cartan $(2,3,5)$-distribution is defined 
as the tangent distribution $\mathcal{D}$ of rank~$2$ 
on a $5$-dimensional manifold $M$ 
with a certain derived flag $\mathcal{D}\subset \D^2\subset TM$ of length~$2$. 
 Then the singularity $\Sigma\subset\ojet{X}$ is defined as a stratified subset.
 We first define the singular locus $S$ for the almost Cartan condition 
(see Definition~\ref{def:alm_cartan}) in the target side $Z$ 
of $\mathcal{F}\colon\sect{X}\to\sect{Z}$. 
 Then we pull it back by the symbol $F\colon\ojet{X}\to Z$ of $\mathcal{F}$ 
to $\Sigma:=F^{-1}(S)\subset\ojet{X}$. 

  Now, we define the singular locus for the almost Cartan condition in $Z$, 
the target side of $\mathcal{F}$. 
 The singularity $S=S_1\cup S_2\subset Z$ 
with strata $S_1,\ S_2\subset Z$ is defined as follows. 
 The first stratum $S_1\subset Z$ is concerning the first derivation 
$\D^2=[\D,\D]$ from $\D$. 
 Let $S_1\subset Z$ be the subset defined as
\begin{align}\label{eq:def_s1}
  S_1:=\lft\{(\alpha_1,\alpha_2,\ \alpha_3,\ \omega_1,\omega_2,\ \omega_3)_p
  \in\bigoplus^3\lft(T^\ast M\oplus\bigwedge^2T^\ast M\rgt)=Z \rgt. 
  \qquad & \\
  \qquad\qquad \lft|\; 
  \begin{matrix}
    (\alpha_1\wedge\alpha_2\wedge\alpha_3\wedge\omega_1)_p=0,\\
    (\alpha_1\wedge\alpha_2\wedge\alpha_3\wedge\omega_2)_p=0,\\
    (\alpha_1\wedge\alpha_2\wedge\alpha_3\wedge\omega_3)_p=0\mbox{ }
  \end{matrix}
  \rgt\}.& \notag
\end{align}
 In other words, it is where conditions (1)-(i),(ii) in the definition of 
almost Cartan structure do not hold (see Definition~\ref{def:alm_cartan}). 

  The second stratum $S_2\subset Z$ is defined 
in the complement of $S_1$. 
 In $Z\setminus S_1$, at least one 
of $(\alpha_1\wedge\alpha_2\wedge\alpha_3\wedge\omega_i)_p$, $i=1,2,3$, 
is not equal to zero. 
 Then we divide the complement $Z\setminus S_1$ 
according to the number of such $5$-forms as follows. 
 Set 
\begin{equation*}
  U_i:=\{(\alpha_1,\alpha_2,\alpha_3,\omega_1,\omega_2,\omega_3)_p\in Z\mid
  (\alpha_1\wedge\alpha_2\wedge\alpha_3\wedge\omega_i)_p\ne 0\},\quad 
  i=1,2,3. 
\end{equation*}
 The subset where all three $5$-forms are not zero is defined as 
\begin{equation*}
  U^3_{123}:=U_1\cap U_2\cap U_3. 
\end{equation*}
 The subsets where two of those  $5$-forms are not zero are defined as 
\begin{equation*}
  U^2_{12}:=(U_1\cap U_2)\setminus U_3,\quad 
  U^2_{13}:=(U_1\cap U_3)\setminus U_2,\quad 
  U^2_{23}:=(U_2\cap U_3)\setminus U_1. 
\end{equation*}
 The subsets where one of those $5$-forms are not zero are defined as 
\begin{equation*}
  U^1_1:=U_1\setminus (U_2\cap U_3),\quad 
  U^1_2:=U_2\setminus (U_1\cap U_3),\quad 
  U^1_3:=U_3\setminus (U_1\cap U_2). 
\end{equation*}
 Then we have $Z\setminus S_1
=U^3_{123}\cup U^2_{12}\cup U^2_{13}\cup U^2_{23}\cup U^1_1\cup U^1_2\cup U^1_3$. 

 Then, in each $U_i$, $i=1,2,3$, 
we define singular loci $S_2^i$ as follows. 
 For example, we deal with $U_3\subset Z$, 
where $(\alpha_1\wedge\alpha_2\wedge\alpha_3\wedge\omega_3)_p\ne0$. 
 Setting $g^3_i$, $i=1,2$,  as 
\begin{equation*}
  (\alpha_1\wedge\alpha_2\wedge\alpha_3\wedge\omega_i)_p 
  = g^3_i(p)\cdot (\alpha_1\wedge\alpha_2\wedge\alpha_3\wedge\omega_3)_p ,
  \qquad i=1,2, 
\end{equation*}
we define $\tilde{\omega}^3_i$, $i=1,2$, as 
\begin{equation*}
  (\tilde{\omega}^3_i)_p:=(\omega_i)_p-g^3_i(p)\cdot(\omega_3)_p,\quad i=1,2. 
\end{equation*}
 We should remark that 
$(\alpha_1\wedge\alpha_2\wedge\alpha_3\wedge \tilde{\omega}^3_i)_p=0$ 
hold for both $i=1,2$. 
 This implies that the triple $\tilde{\omega}_1, \tilde{\omega}_2,\omega_3$ 
of $2$-forms satisfies the condition~{(1)} of the definition 
of almost Cartan structure (see Definition~\ref{def:alm_cartan}). 
 Now we define the singular locus $S_2^3\subset Z$ as 
\begin{align}\label{eq:def_s23}
   S_2^3:=&\lft\{(\alpha_1,\alpha_2,\alpha_3,\omega_1,\omega_2,\omega_3)_p
  \in U_3\subset\bigoplus^3\lft(T^\ast M\oplus\bigwedge^2T^\ast M\rgt)=Z 
            \rgt. \qquad & \\
          &\qquad \lft.\; \lft\vert \;
    (\alpha_1\wedge\alpha_2\wedge\tilde{\omega}^3_1)_p\quad\text{and}\quad 
      (\alpha_1\wedge\alpha_2\wedge\tilde{\omega}^3_2)_p\phantom{\bigwedge^2}
      \text{are linearly dependent}
  \rgt.\rgt\} \notag
\end{align}
 We define singular loci $S_2^2\subset U_2$ and $S_2^1\subset U_1$ 
in the same way. 

  Now, we define the second stratum $S_2\subset Z$ as follows. 
\begin{equation}\label{eq:def_s2}
  S_2:=
  \begin{cases}
    S_2^1\cap S_2^2\cap S_3^3 &\quad \text{(on $U^3_{123}$),} \\
    S_2^1\cap S_2^2 &\quad \text{(on $U^2_{12}$)}, \\
    S_2^1\cap S_2^3 &\quad \text{(on $U^2_{13}$}), \\
    S_2^2\cap S_2^3 &\quad \text{(on $U^2_{23}$)}, \\
    S_2^1 &\quad \text{(on $U^1_1$)}, \\
    S_2^2 &\quad \text{(on $U^1_2$)}, \\
    S_2^3 &\quad \text{(on $U^1_3$)}. 
  \end{cases}
\end{equation}
%
%
 The stratum $S_2\subset Z\setminus S_1$ corresponds to the locus 
where Condition~(1) in the definition of the almost Cartan structure holds 
but Condition~(2) does not hold (see Definition~\ref{def:alm_cartan}). 

  Combining these strata, we define the singular locus $S\subset Z$ 
for the almost Cartan condition as 
\begin{equation}\label{eq:def_s}
  S:=S_1\cup S_2\subset Z. 
\end{equation}

 Then we define the relevant differential relation in $\ojet{X}$. 
 We take the inverse image of $S=S_1\cup S_2\subset Z$ 
by the symbol $F\colon X^{(1)}\to Z$ 
of the differential operator $\mathcal{F}\colon \sect X\to \sect{Z}$ 
(see Section~\ref{sec:h-prin} for definition). 
 Set 
\begin{equation}\label{eq:def_sgm}
  \Sigma_1:=F^{-1}(S_1),\quad \Sigma_2:=F^{-1}(S_2),\quad 
  \text{and}\quad \Sigma:=F^{-1}(S)\subset \ojet{X}. 
\end{equation}
 Then, since the subset $S\subset Z$ corresponds to 
where the defining conditions of the almost Cartan structure do not hold, 
the subset $\Sigma=F^{-1}(S)\subset \ojet{X}$ is regarded 
as the singularity to be considered. 
 Let $\mathcal{R}\subset \ojet{X}$ denote the complement 
of the singularity $\Sigma$: 
\begin{equation}\label{eq:def_dr}
  \mathcal{R}:=\ojet{X}\setminus\Sigma\subset\ojet{X}. 
\end{equation}
 It is the open differential relation to be considered 
in order to show Theorem~\ref{thm:key}.

\section{Proof of the key theorem}\label{sec:pf_ethm}
  In this section, we show Theorem~\ref{thm:key}, 
the key theorem for the main theorems. 
 As mentioned above, we apply Gromov's convex integration method, 
and the \textit{h}-principle for differential sections. 
 In Section~\ref{sec:ample}, we show that the differential relation 
$\mathcal{R}\subset\ojet{X}$ is ample. 
 Then we apply Gromov's \textit{h}-principle for ample differential relations, 
and the \textit{h}-principle for first order linear differential operators, 
in Section~\ref{sec:proof_cvitgr}. 

\subsection{Ampleness of the differential relation 
$\mathcal{R}\subset\ojet{X}$}\label{sec:ample}
  We show that the differential relation $\mathcal{R}\subset\ojet{X}$ 
obtained in the previous section is ample. 
 The notations introduced in the previous section are used. 
 Let $M$ be a $5$-dimensional manifold, 
$X=\bigoplus^3T^\ast M$ 
and $Z=\bigoplus^3\lft(T^\ast M\oplus\bigwedge^2 T^\ast M\rgt)$ 
the vector bundles over $M$,
$\mathcal{F}\colon\sect{X}\to\sect{Z}$ 
the linear differential operator, 
and $\mathcal{R}=X^{(1)}\setminus\Sigma\subset X^{(1)}$ 
the differential relation. 
%
%
\begin{prop}\label{prop:core}
  The open differential relation $\mathcal{R}\subset X^{(1)}$ is ample. 
\end{prop}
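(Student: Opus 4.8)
The plan is to prove ampleness directly, by intersecting $\mathcal{R}$ with an arbitrary principal subspace $P$ and showing that each path-component of $\mathcal{R}\cap P$ has convex hull equal to the whole affine fibre, in the spirit of Proposition~\ref{prop:essence_(3,5)}; the genuinely new feature is the stratification $S=S_1\cup S_2$, so the computation must proceed stratum by stratum. First I would fix local coordinates $(x_1,\dots,x_5)$ and, as in Section~\ref{sec:key_(3,5)}, take the principal direction $P=P_1$ attached to $x_1$, so that the only free fibre coordinates are the twelve variables $z^i_{1l}$ ($i=1,2,3$, $l=2,3,4,5$), the other $z^i_{mn}$ and the coframe entries $a^i_j$ staying constant on $P$ because $\Sigma=F^{-1}(S)$ is pulled back along the exterior-derivative symbol $F$. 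In these coordinates the five-form coefficient $L_i$ of $\alpha_1\wedge\alpha_2\wedge\alpha_3\wedge\omega_i$ is affine in its own block $z^i_{1\bullet}$, with linear part the $3\times 3$ minors $A_{jkl}$ ($\{j,k,l\}\subset\{2,3,4,5\}$) of $(\alpha_1,\alpha_2,\alpha_3)$; likewise each four-form $\beta_i:=\alpha_1\wedge\alpha_2\wedge\omega_i$ is affine in $z^i_{1\bullet}$.

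I would then dispose of the first stratum $S_1$, where all three $L_i$ vanish. Since each $L_i$ occupies a disjoint block and the three share the same minors $A_{jkl}$, there is a dichotomy parallel to the Case~1/Case~2 split of Proposition~\ref{prop:essence_(3,5)}: if some $A_{jkl}$ with $\{j,k,l\}\subset\{2,3,4,5\}$ is nonzero then $S_1\cap P$ is a transverse intersection of three affine hyperplanes, hence thin of codimension three; if all such minors vanish then every $L_i$ is constant on $P$, so $S_1\cap P$ is empty or all of $P$, the latter forcing $\mathcal{R}\cap P=\emptyset$, which is ample by definition.

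The heart of the matter is $S_2$. Off $S_1$ some five-form is nonzero, and on each region $U_i$ the locus $S_2^i$ is, by its definition, the linear-dependence condition for the pair of adjusted four-forms $\alpha_1\wedge\alpha_2\wedge\tilde\omega_k$ ($k\ne i$), where $\tilde\omega_k=\omega_k-g^i_k\,\omega_i$ with $g^i_k=L_k/L_i$; this is precisely the $(3,5)$-singularity of Section~\ref{sec:key_(3,5)}, now imposed on $\mathcal{E}=\{\alpha_1=\alpha_2=0\}$. Working inside the open set $U_i$, where $g^i_k$ is smooth, I would clear the denominator $L_i$ and study the resulting equations: when they cut out a thin set the component is automatically convex-hull-filling, and when they degenerate to a single hypersurface I would put it in normal form and, for any point of the complement, exhibit two points of the same component on a line through it — exactly the segment construction of Figures~\ref{fig:cone}--\ref{fig:non-central}. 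On the overlaps $U^2_{jk}$ and $U^3_{123}$ the stratum $S_2$ is an intersection of several $S_2^i$ and hence only thinner, so ampleness is inherited there.

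The main obstacle will be this degeneration analysis for $S_2$. Unlike the pure $(3,5)$ case, clearing $L_i$ from $g^i_k=L_k/L_i$ turns each $2\times 2$ minor into a polynomial of total degree three, so the quadric normal forms and the signature-$(2,2)$ computation of Proposition~\ref{prop:essence_(3,5)} do not transfer verbatim. The structural fact I would exploit is that this cubic is \emph{affine in each of the three blocks} $z^1_{1\bullet},z^2_{1\bullet},z^3_{1\bullet}$ separately: fixing two blocks and moving along the third drives it linearly through every real value, so through any prescribed point of the complement there is a block-parallel line meeting the hypersurface once, splitting into two rays in the two complementary components. The crux is then to show this block-affine indefiniteness is genuine — that no component is trapped on one side, as it would be for a definite quadric — which I expect to verify by checking that the codimension-one configurations are exactly those on which the adjustment trivialises (so $\tilde\omega_k=\omega_k$ and the cubic collapses to one of the $(3,5)$-quadrics already handled). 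Combining the thin stratum $S_1$ with this block-affine convexity for $S_2$ then yields that $\mathcal{R}\cap P$ is empty or has every component convex-hull-filling, which is the required ampleness.
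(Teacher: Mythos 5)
Your overall architecture coincides with the paper's: fix local coordinates, restrict to the principal subspace $P$ attached to $x_1$ so that only the twelve variables $z^i_{1l}$ move, treat the stratum $\Sigma_1$ first and $\Sigma_2$ second, and reduce each $S_2^i$ to a $(3,5)$-type linear-dependence locus. Your treatment of $\Sigma_1$ is exactly the paper's: on $P$ the three defining equations are affine with identical coefficient rows $(A_{345},-A_{245},A_{235},-A_{234})$ occupying disjoint blocks, so the rank of the system is $0$ or $3$, giving either $P$, the empty set, or a thin set of codimension three. Your reduction of the ampleness of $\mathcal{R}\cap P$ to that of $(X^{(1)}\setminus\Sigma_2)\cap P$, according to whether $\Sigma_1\cap P$ is all of $P$, empty, or thin, also matches the paper's Step~2 preamble.

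The gap is in your treatment of $S_2$. The paper disposes of it by asserting that each $S_2^i$ is the same singular locus as the one in Proposition~\ref{prop:essence_(3,5)} and invoking that proposition wholesale; you rightly observe that this reduction is not verbatim, because the adjusted forms $\tilde{\omega}^i_k=\omega_k-(L_k/L_i)\,\omega_i$ make the restricted minors rational (cubic after clearing $L_i$) rather than quadratic, so the signature-$(2,2)$ quadric normal forms do not transfer. But your replacement argument does not close the hole you opened. First, the block-affine observation only shows that along a block-parallel line the cleared polynomial is affine and hence vanishes at most once; this places the two resulting rays in the two \emph{different} components $\{f>0\}$ and $\{f<0\}$, whereas the convex-hull construction of Proposition~\ref{prop:essence_(3,5)} requires two points of the \emph{same} component on opposite sides of the given point --- something a single affine crossing can never produce. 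One needs an indefinite two-dimensional section, or a line meeting the hypersurface in at least two points, and you never exhibit either. Second, the closing step (``I expect to verify that the codimension-one configurations are exactly those on which the adjustment trivialises'') is a hope rather than an argument, and it is not clearly well posed: the coefficients $g^i_k=L_k/L_i$ depend on the fibre variables $z^i_{1l}$ themselves, not only on the frozen data $A_{jkl}$ and $z^i_{ml}$ with $m\neq 1$, so ``the adjustment trivialises'' is not a condition on a configuration that can be isolated before the variables are chosen. As written, the crux of the proposition --- that the complement of each degenerate $S_2^i\cap P$ still has every path-component with full convex hull --- is left unproved.
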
 

\begin{proof}
  Recall that the singularity $\mathcal{R}\subset\ojet{X}$ is defined 
as the complement of the singularity $\Sigma\subset\ojet{X}$. 
 And the singularity consists of two strata as $\Sigma=\Sigma_1\cup\Sigma_2$. 
 Then we discuss in two steps. 

  For this kind of discussions, 
it is enough to discuss using local coordinates 
of the base manifold $M$ of the bundles, 
like in Section~\ref{sec:(3,5)-distributions}. 
 Let $(x_1,x_2,\dots,x_5)$ be local coordinates 
of the $5$-dimensional manifold $M$. 
 Then $\{(dx_1)_p,\dots,(dx_5)_p\}$ is a basis of the fiber $T^\ast_p M$. 
 Let $(a_1,\dots,a_5)$ be coordinates on the fiber of $T^\ast M$. 
 Similarly, $\{(dx_i\wedge dx_j)_p\}_{1\le i<j\le 5}$ is a basis of the fiber 
$\lft(\bigwedge^2 T^\ast M\rgt)_p$. 
 Let $(z_{12},\dots,z_{45})$ be coordinates on the fiber 
of $\bigwedge^2 T^\ast M$. 
 Further, let $(a_1,\dots,a_5,y_{11},y_{12},\dots,y_{55})$ be coordinates of 
the fiber $(T^\ast M)^{(1)}_p$, 
where $y_{ij}$ corresponds to $\rd a_i/\rd x_j$. 

\noindent \underline{\textbf{Step~1}}: Singularity $\Sigma_1\subset\ojet{X}$. \\
\indent  Using the local coordinates above, 
we explicitly write down the singularity 
$\Sigma_1\subset\ojet{X}$ as follows. 
 To this end, we first recall the definition of $\Sigma_1$. 
 The first stratum $\Sigma_1$ is defined 
as the inverse image $\Sigma_1=F^{-1}(S_1)$, 
where $F\colon\ojet{X}\to\sect{Z}$ is the symbol 
of the linear differential operator $\mathcal{F}\colon\sect X\to\sect{Z}$ 
defined by Equation~\ref{eq:def_f} concerning exterior derivative 
of differential forms. 
 Note that, by the exterior derivative, 
the coordinates $y_{ij}-y_{ji}$ of $\ojet{T^\ast M}_p$ correspond 
to $z_{ij}$ of $\bigwedge^2T^\ast M$. 
 Recall that the set $S_1\subset Z$ 
is defined by the following condition (see Equation~\eqref{eq:def_s1}): 
\begin{equation}\label{eq:s1_forms}
  (\alpha_1\wedge\alpha_2\wedge\alpha_3\wedge\omega_1)_p=0,\quad
  (\alpha_1\wedge\alpha_2\wedge\alpha_3\wedge\omega_2)_p=0,\quad
  (\alpha_1\wedge\alpha_2\wedge\alpha_3\wedge\omega_3)_p=0, 
\end{equation}
where $\alpha_i\in\sect{T^\ast M}$ are $1$-forms, 
$\omega_i\in\sect{\bigwedge^2 T^\ast M}$ are $2$-forms, and $p\in M$ a point. 
 Then, in order to write down the singularity 
$\Sigma_1=F^{-1}(S_1)\subset \ojet{X}$ by using local coordinates, 
we represent $\alpha_i$ and $\omega_i$ on a fiber over $p\in M$ 
by such coordinates as follows: 
\begin{align*}
  \alpha_i&=\sum_{j=1}^5 a^i_jdx_j 
            =a^i_1dx_1+a^i_2dx_2+\dots+a^i_5dx_5, \quad (i=1,2,3), \\
  \omega_i&=\sum_{1\le j<k\le 5}z^i_{jk}dx_j\wedge dx_k \\
          & =z^i_{12}dx_1\wedge dx_2+z^i_{13}dx_1\wedge dx_3+\dots
            +z^i_{45}dx_{4}\wedge dx_5, \qquad (i=1,2,3). 
\end{align*}
 Following this representation, the $5$-forms 
in Equation~\eqref{eq:s1_forms} are written down as follows. 
 First, we have 
\begin{equation*}
  \alpha_1\wedge\alpha_2\wedge\alpha_3
  =\sum_{1\le i<j<k\le 5}A_{ijk}dx_i\wedge dx_j\wedge dx_k, 
\end{equation*}
where $A_{ijk}$ are minor determinants 
\begin{equation*}
  A_{ijk}:=
  \begin{vmatrix}
    a^1_i & a^1_j & a^1_k\\ a^2_i & a^2_j & a^2_k\\ a^3_i & a^3_j & a^3_k
  \end{vmatrix},\quad 1\le i<j<k\le 5. 
\end{equation*}
 On the other hand, $z^i_{jk}$ is valid for $j<k$. 
 Then the $5$-forms in Equation~\eqref{eq:s1_forms} are 
\begin{align}
  \alpha_1\wedge\alpha_2\wedge\alpha_{3}\wedge\omega_i
  =&\sum_{\substack{\{i,j,k,l,m\}=\{1,2,3,4,5\}\\ i<j<k,\ l<m}}
     \lft(\sigma(i,j,k,l,m)A_{ijk}z^i_{lm}\rgt){\cdot}
     dx_1\wedge dx_2\wedge\dots\wedge dx_5, \\
  =&A_{123}z^i_{45}-A_{124}z^i_{35}+A_{125}z^i
     _{34}+A_{134}z^i_{25}-A_{135}z^i_{24}\notag\\
  &+A_{145}z^i_{23}-A_{234}z^i_{15}+A_{235}z^i_{14}
    -A_{245}z^i_{13}-A_{345}z^i_{12}, \notag \\
   &\qquad i=1,2,3, \notag
\end{align}
where $\sigma(i,j,k,l)$ is the sign of permutation. 
 At last, we obtain a representation of the singularity 
$\Sigma_1\subset\ojet{X}$ by the local coordinates. 
 From the expression in Equation~\eqref{eq:def_s1}, 
the defining condition of $\Sigma_1\subset\ojet{X}$ is 
the following system of equations: 
\begin{equation}\label{eq:descr_s1}
  \lft\{
  \begin{aligned}
   & A_{123}z^1_{45}-A_{124}z^1_{35}+A_{125}z^1
     _{34}+A_{134}z^1_{25}-A_{135}z^1_{24}\\
  &\qquad\quad +A_{145}z^1_{23}-A_{234}z^1_{15}+A_{235}z^1_{14}
    -A_{245}z^1_{13}+A_{345}z^1_{12}=0,  \\
   & A_{123}z^2_{45}-A_{124}z^2_{35}+A_{125}z^2
     _{34}+A_{134}z^2_{25}-A_{135}z^2_{24}\\
  &\qquad\quad +A_{145}z^2_{23}-A_{234}z^2_{15}+A_{235}z^2_{14}
    -A_{245}z^2_{13}+A_{345}z^2_{12}=0,  \\
   & A_{123}z^3_{45}-A_{124}z^3_{35}+A_{125}z^3 
     _{34}+A_{134}z^3_{25}-A_{135}z^3_{24}\\
  &\qquad\quad +A_{145}z^3_{23}-A_{234}z^3_{15}+A_{235}z^3_{14}
    -A_{245}z^3_{13}+A_{345}z^3_{12}=0. 
  \end{aligned}
  \rgt.
\end{equation}
 This is the precise description of $\Sigma_1\subset\ojet{X}$ 
by the local coordinates. 

  In order to show that the differential relation 
$\mathcal{R}_1:=\ojet{X}\setminus\Sigma_1\subset\ojet{X}$ is ample, 
we observe the intersections of $\Sigma_1$ with principal subspaces, 
using local coordinates. 
 To make the discussion simple, we take the principal direction $P$ 
corresponding to $x_1$ in the local coordinates of the base manifold $M$. 
 In other words, in the Principal subspace $P\subset\ojet{X}$, 
only $z^i_{1l}$, $i=1,2,3$, $l=1,\dots, 5$, are variables. 
 Other $z^i_{ml}$, $m\ne 1$, and $a^i_j$ are constant on $P$ 
in a fiber over $p\in M$. 
 Then we observe the intersection $\Sigma_1\cap P$ 
of the singularity $\Sigma_1\subset\ojet{X}$ with the principal subspace $P$. 
 Recall that $\Sigma_1\subset\ojet{X}$ is described 
by the system of equations~\eqref{eq:descr_s1}. 
 In the intersection $\Sigma_1\cap P$, 
only $z^i_{12}, z^i_{13}, z^i_{14}, z^i_{15}$, $i=1,2,3$, are variables. 
 Then $\Sigma_1\cap P$ is described by the following system of linear equations:
\begin{equation}\label{eq:descr_intsct}
  \lft\{
  \begin{aligned}
    & A_{345}z^1_{12}-A_{245}z^1_{13}+A_{235}z^1_{14}-A_{234}z^1_{15}     \\
    &\qquad\quad =-A_{145}z^1_{23}+A_{135}z^1_{24}-A_{134}z^1_{25}
      -A_{125}z^1_{34}+A_{124}z^1_{35}-A_{123}z^1_{45},  \\
    & A_{345}z^2_{12}-A_{245}z^2_{13}+A_{235}z^2_{14}-A_{234}z^2_{15}     \\
    &\qquad\quad =-A_{145}z^2_{23}+A_{135}z^2_{24}-A_{134}z^2_{25}
      -A_{125}z^2_{34}+A_{124}z^2_{35}-A_{123}z^2_{45},  \\
    & A_{345}z^3_{12}-A_{245}z^3_{13}+A_{235}z^3_{14}-A_{234}z^3_{15}     \\
    &\qquad\quad =-A_{145}z^3_{23}+A_{135}z^3_{24}-A_{134}z^3_{25}
      -A_{125}z^3_{34}+A_{124}z^3_{35}-A_{123}z^3_{45}. 
  \end{aligned}
  \rgt. 
\end{equation} 
 Note that this system has 12 variables, and that the right-hand side 
of each three equations is constant. 

  Then we observe the codimension of $\Sigma_1\cap P\subset P$ 
to show that $\ojet{X}\setminus\Sigma\subset\ojet{X}$ is ample. 
 The intersection $\Sigma_1\cap P$ is defined 
by the system of linear equations~\eqref{eq:descr_intsct}. 
 If the system~\eqref{eq:descr_intsct} has no solution, 
$\mathcal{R}_1=\ojet{X}$. 
 Then it is ample. 
 If the system~\eqref{eq:descr_intsct} has solutions, 
the rank of the coefficient matrix of the system 
is the codimension of $\Sigma\cap P\subset P$. 
 The coefficient matrix is the $3\times12$-matrix written down as
\begin{align*}
  &C:= \\
  &\lft(
  \begin{array}{cccccccccccc}
    A_{345} & -A_{245} & A_{235} & -A_{234} & 0&0&0&0&0&0&0&0 \\
    0&0&0&0& A_{345} & -A_{245} & A_{235} & -A_{234} & 0&0&0&0 \\
    0&0&0&0&0&0&0&0& A_{345} & -A_{245} & A_{235} & -A_{234} 
  \end{array}
  \rgt). 
\end{align*}
 When $\rk C=0$, the singularity $\Sigma_1$ 
is either $\ojet{X}$ or~$\emptyset$. 
 Then $\mathcal{R}_1=\ojet{X}\setminus\Sigma_1$ is ample by definition. 
 In order to show that $\mathcal{R}_1\subset\ojet{X}$ is ample, 
we would like to show $\rk C\ne 1$. 
 It is proved as follows. 
 Suppose $\rk C\ne 0$. 
 Then at least one of the rows of $C$ is not $(0\cdots0)$. 
 Since each row has the same entries, all rows are not $(0\cdots0)$. 
 Hence $\rk C=3\ne 1$. 
 Then we conclude that the rank of the coefficient matrix is greater than $1$. 

  Thus, we have proved that the singularity $\Sigma_1$ is thin 
if it is not empty or entire $\ojet{X}$. 
 In other words, the differential relation 
$\mathcal{R}_1=\ojet{X}\setminus\Sigma$ is ample. 

\noindent \underline{\textbf{Step~2}}: 
Singularity $\Sigma=\Sigma_1\cup\Sigma_2\subset\ojet{X}$. \\
\indent  First, we should arrange the cases to consider following Step~1. 
 Setting $\mathcal{R}_2:=\ojet{X}\setminus \Sigma_2$, 
we observe this differential relation. 
 Recall that the strata $\Sigma_2\subset\ojet{X}$ is defined 
after $\Sigma_1$. 
 From Step~1, the intersection $\Sigma_1\cap P$ 
can be either the empty set, the entire $P$, or is thin. 
 In each case, we observe the relation between the ampleness 
of the differential relations $\mathcal{R}$ and that of $\mathcal{R}_2$. \\
(1)\ If $\Sigma_1\cap P=P$, 
then $\Sigma_2\cap P=\emptyset$ and $\Sigma\cap P=P$. 
 Therefore, $\mathcal{R}\cap P=(\ojet{X}\setminus\Sigma)\cap P\subset P$ 
is ample in this case. \\
(2)\  If $\Sigma_1\cap P=\emptyset$, then $\Sigma\cap P=\Sigma_2\cap P$. 
 Therefore, if $\mathcal{R}_2\cap P\subset P$ is ample 
$\mathcal{R}\cap P\subset P$ is ample there. \\
(3)\ If $\Sigma_1\cap P\subset P$ is thin, then $P\setminus\Sigma_1\subset P$ 
is path-connected, and $\cv{P\setminus\Sigma_1}=P$. 
 Then, if $U\subset\mathcal{R}_2\cap P$ is a path-connected component, 
$U\setminus\Sigma_1\subset\mathcal{R}\cap P$ 
is a path-connected component. 
 In addition, $\cv{U}=\cv{U\setminus\Sigma_1}$. 
 Therefore, if $\mathcal{R}_2\cap P\subset P$ is ample, 
$\mathcal{R}\cap P\subset P$ is ample. \\
 In summary, if $\mathcal{R}_2\cap P\subset P$ is ample 
$\mathcal{R}\cap P\subset P$ is ample. 
 In other words, in order to show 
that the differential relation $\mathcal{R}\subset\ojet{X}$ is ample, 
it is sufficient to prove that $\mathcal{R}_2\subset\ojet{X}$ is ample. 

  Now, we show that $\mathcal{R}_2=\ojet{X}\setminus\Sigma_2\subset\ojet{X}$ 
is ample. 
 First, we recall the definition 
of the differential relation $\mathcal{R}_2\subset\ojet{X}$ 
or the singularity  $\Sigma_2\subset\ojet{X}$. 
 The singularity $\Sigma_2$ is defined as $\Sigma_2:=F^{-1}(S_2)$ 
(see Equations~\eqref{eq:def_sgm}). 
 The singular locus $S_2\subset Z$ is defined as combination 
of $S_2^1, S_2^2, S_2^3\subset Z$ (see Equation~\eqref{eq:def_s2}). 
 The important thing is that the definition of $S_2^i\subset Z$ 
is the same as that of $\tilde{S}\subset Z_1$ 
(see Equation~\eqref{eq:def_s_(3,5)} and Equation~\eqref{eq:def_s23}). 
 In other words, both are singular loci for almost $(3,5)$-distributions. 
 Then we can apply the same arguments 
as in Section~\ref{sec:(3,5)-distributions} to each $S_2^i\subset Z$ 
and $\Sigma_2^i=F^{-1}(S_2^i)\subset \ojet{X}$, $i=1,2,3$. 
 As a result, we conclude that each complement 
$\ojet{X}\setminus\Sigma_2^i\subset$ is ample, $i=1,2,3$, 
like Proposition~\ref{prop:essence_(3,5)}. 
 In addition, the singular locus $S_2\subset Z$ is defined 
by the intersections of these $S_2^i\subset Z$, $i=1,2,3$, that is, 
subsets of each $S_2^i$. 
 Therefore the differential relation 
$\mathcal{R}_2=\ojet{X}\setminus\Sigma_2\subset\ojet{X}$ is ample. 

  This completes the proof of Proposition~\ref{prop:core}. 
\end{proof}

\subsection{Proof of Theorem~\ref{thm:key}}\label{sec:proof_cvitgr}
 From the result in the previous subsection, we prove Theorem~\ref{thm:key}. 
 Gromov's convex integration method (Theorem~\ref{thm:HP_apl_dr}), 
and the \textit{h}-principle for first order linear differential operators 
(Theorem~\ref{thm:HP_D-secs}) are applied. 

\begin{proof}[Proof of Theorem~\textup{\ref{thm:key}}]
 Let $M$ be a $5$-dimensional manifold. 
 Suppose that there exists an almost Cartan structure 
$(\D\subset\mathcal{E},\{\omega_1,\omega_2;\omega_3\})\in\almcrtn$ 
(see Definition~\ref{def:alm_cartan}). 
 Let $\alpha_i$, $i=1,2,3$, denote a local coframing of $\D$. 
 We prove the theorem in the following two steps. 

  First, we apply Proposition~\ref{prop:core} and 
Gromov's \textit{h}-principle for ample differential relations 
(Theorem~\ref{thm:HP_apl_dr}). 
 Let the vector bundle $X:=\bigoplus^{3}T^\ast M$ over $M$ 
be as in Equation~\eqref{eq:Xbdl}. 
 Now, we take the differential relation 
$\mathcal{R}\subset\ojet{X}$ to be considered as in Equation~\eqref{eq:def_dr}. 
 Then, from Proposition~\ref{prop:core}, $\mathcal{R}\subset\ojet{X}$ is ample. 
 This implies, according to Gromov's \textit{h}-principle 
for ample differential relations (Theorem~\ref{thm:HP_apl_dr}), 
that the differential relation $\mathcal{R}\subset\ojet{X}$ satisfies 
the \textit{h}-principle and the one-parametric \textit{h}-principle. 

  Next, we discuss the \textit{h}-principles 
for first order linear differential operators. 
 For the vector bundle $X$ above, let the vector bundle 
$Z:=\bigoplus^{3}\lft(T^\ast M\oplus\lft(\bigwedge^2T^\ast M\rgt)\rgt)$ 
over $M$ be as in Equation~\eqref{eq:Zbdl}, 
and the linear differential operator 
$\mathcal{F}\colon\sect{X}\to\sect{Z}$ 
be as in Equation~\eqref{eq:def_f}. 
 Note that the symbol $F\colon X^{(1)}\to Z$ of $\mathcal{F}$ 
is fiberwise epimorphic. 
 Recall that the differential relation $\mathcal{R}\subset\ojet{X}$ is defined 
as $\mathcal{R}=\ojet{X}\setminus\Sigma=\ojet{X}\setminus F^{-1}(S)$ 
for the singular locus $S\subset Z$ for the almost Cartan condition 
defined in Equation~\eqref{eq:def_s}. 
 As proved in the previous paragraph, 
the differential relation $\mathcal{R}\subset\ojet{X}$
satisfies the \textit{h}-principle and the one-parametric \textit{h}-principle. 
 Then, from the \textit{h}-principle 
for first order linear differential operators (Theorem~\ref{thm:HP_D-secs}), 
we obtain that the \textit{h}-principle 
and the one-parametric \textit{h}-principle hold for the inclusion 
\begin{equation*}
  \secc_{\mathcal{F}}\lft(Z\setminus S\rgt)
  \hookrightarrow 
  \sect{Z\setminus S}, 
\end{equation*}
where
\begin{align*}
  \secc_{\mathcal{F}}\lft(Z\setminus S\rgt)
  :=
  \lft\{s\in \sect{Z\setminus S}
  \;\lft|\; 
  \begin{aligned}
    &s=\mathcal{F}(t)\\ 
    &\text{for some}\ t\in\sect{X}
  \end{aligned}
  \rgt.\rgt\}
\end{align*}
is the space of $\mathcal{F}$-sections. 

  The facts established above, namely, 
the \textit{h}-principle and the one-parametric \textit{h}-principle, 
imply Theorem~\ref{thm:key}. 
 In fact, the set $\sect{Z\setminus S}$ 
corresponds to the set of coframings of the almost Cartan structures, 
and the set $\secc_{\mathcal{F}}\lft(Z\setminus S\rgt)$ 
corresponds to the set of coframings 
of the genuine Cartan~$(2,3,5)$-distributions. 
In other words, $\sect{Z\setminus S}$ corresponds to $\almcrtn$, 
and $\secc_{\mathcal{F}}\lft(Z\setminus S\rgt)$ 
corresponds to $\Omega_{(2,3,5)}(M)$ 
or $\bar{\Omega}_{(2,3,5)}(M)\subset\almcrtn$. 
 Then the \textit{h}-principle implies the claim~(1) of Theorem~\ref{thm:key}. 
 Indeed, if there exists an almost Cartan structure, 
the \textit{h}-principle guarantees the existence 
of a genuine Cartan $(2,3,5)$-distribution 
homotopic to the almost Cartan structure. 
 The converse is clear. 
 On the other hand, the one-parametric \textit{h}-principle 
implies the claim~(2) of Theorem~\ref{thm:key}. 
 Indeed, the one-parametric \textit{h}-principle guarantees 
that a path in $\sect{Z\setminus S}$ connecting two points 
in $\secc_{\mathcal{F}}\lft(Z\setminus S\rgt)$ can be deformed, 
keeping both ends, 
so that the whole path lies in $\secc_{\mathcal{F}}\lft(Z\setminus S\rgt)$. 
 This means a deformation of the path of almost Cartan structures. 

  This completes the proof of Theorem~\ref{thm:key}. 
\end{proof}

\subsection{Proofs of main theorems}\label{sec:pf_thms}
 In this subsection, we show Theorem~\ref{thma} and Theorem~\ref{thmc} 
from the key theorem, Theorem~\ref{thm:key}. 
\begin{proof}[Proof of Theorem~\ref{thma}]
  As was noted after Definition~\ref{def:alm_cartan}, 
  for each ``genuine'' Cartan $(2,3,5)$-distribution 
  there exists an associated almost Cartan structure. 
  Then it remains to show the converse. 
  Suppose that there exists an almost Cartan structure 
  $(\D,\{\omega_1,\omega_2;\omega_3\})$ on a $5$-dimensional manifold $M$. 
  Then, by Theorem~\ref{thm:key}, 
  $(\D,\{\omega_1,\omega_2;\omega_3\})\in\almcrtn$ is homotopic in $\almcrtn$ 
  to an almost Cartan structure 
  $(\D,\{[d\alpha_1],[d\alpha_2];[d\alpha_3]\})\in\bar{\Omega}_{(2,3,5)}(M)$ 
  associated with the genuine Cartan $(2,3,5)$-distribution $\D$. 
  That is, there exists a genuine Cartan $(2,3,5)$-distribution 
  $\D\in\Omega_{(2,3,5)}(M)$ on $M$. 
\end{proof}

\begin{proof}[Proof of Theorem~\ref{thmc}]
  Let $\D_0,\ \D_1\in\Omega_{(2,3,5)}(M)$ be genuine $(2,3,5)$-distributions. 
  Suppose that there exists a homotopy 
  $(\D_t,\{\omega_1^t,\omega_2^t;\omega_3^t\})\in\almcrtn$, $t\in[0,1]$, 
  between two almost $(2,3,5)$-distributions 
  $(\D_i,\{\omega_1^i,\omega_2^i;\omega_3^i\})
  =(\D_i,\{[d\alpha_1^i],[d\alpha_2^i];[d\alpha_3^i]\})
  \in\bar{\Omega}_{(2,3,5)}(M)$ 
  associated with the given genuine Cartan $(2,3,5)$-distribution 
  $\D_i$, for $i=0,1$. 
  Then, by Theorem~\ref{thm:key}, there exists a path 
  $(\tilde{\D}_t,\{[d\tilde{\alpha}_1^t],[d\tilde{\alpha}_2^t]\})
  \in\bar{\Omega}_{(2,3,5)}(M)$ 
  of almost Cartan structures associated 
  with genuine Cartan $(2,3,5)$-distributions 
  $\tilde{\D}_t\in\Omega_{(2,3,5)}(M)$, where $\tilde{\D}_i=\D_i$, $i=0,1$. 
  That is, there exists a homotopy 
  $\tilde{\D}_t\in\Omega_{(2,3,5)}(M)$, $t\in[0,1]$, 
  of the genuine Cartan $(2,3,5)$-distributions 
  between $\D_0=\tilde{\D}_0$ and $\D_1=\tilde{\D}_1$.
\end{proof}

%
%


\begin{thebibliography}{9999}
\bibitem[Aj1]{art11} J.~Adachi, 
  \textit{Global stability of distributions of higher corank 
    of derived length one}, 
  Int.\ Math.\ Res.\ Not. 2003, no.~\textbf{49}, 2621--2638. 

\bibitem[Aj2]{art24} J.~Adachi, 
  \textit{Existence and classification 
    of maximally non-integrable distributions}, 
  preprint, arXiv:2111.01403. 

\bibitem[Am]{madachi}  M.~Adachi, 
  \textit{Embeddings and immersions}, 
  Translations of Mathematical Monographs, \textbf{124}, 
  American Mathematical Society, Providence, RI, 1993. 

\bibitem[BEM]{boelmu} M.~Borman, Ya.~Eliashberg, E.~Murphy, 
  \textit{Existence and classification of overtwisted contact structures 
    in all dimensions}, 
   Acta Math.\ \textbf{215} (2015), 281--361. 

\bibitem[BCG3]{bcg3} R.~L.~Bryant, S.~S.~Chern, R.~B.~Gardner, 
  H.~L.~Goldschmidt, P.~A.~Griffiths, 
  \textit{Exterior differential systems}, 
  Mathematical Sciences Research Institute Publications, \textbf{18}, 
  Springer-Verlag, New York, 1991. 

\bibitem[BH]{bryhsu} R.~Bryant, L.~Hsu, 
  \textit{Rigidity of integral curves of rank 2 distributions}, 
  Invent.\ Math.\ \textbf{114}(1993), 435--461. 

\bibitem[C]{Car5v} E.~Cartan, 
  \textit{Les syst\`emes de Pfaff, \`{a} cinq variables 
    et les \'{e}quations aux d\'{e}riv\'{e}es partielles du second ordre}, 
  Ann.\ Sci.\ \'{E}cole Norm.\ Sup.\ (3) \textbf{27} (1910), 109--192. 

\bibitem[CPPP]{cp3} R.~Casals, J.~P\'erez, \'A.~del Pino, F.~Presas, 
  \textit{Existence h-principle for Engel structures}, 
  Invent.\ Math.\ \textbf{210} (2017), 417--451. 

\bibitem[CEM]{elmi_book} K.~Cieliebak, Ya.~Eliashberg, N.~Mishachev, 
  \textit{Introduction to the h-principle, Second edition}, 
  Graduate Studies in Mathematics, \textbf{239}, 
  American Mathematical Society, Providence, RI, 2024. 

\bibitem[DH]{davehaller} S.~Dave, S.~Haller, 
  \textit{On 5-manifolds admitting rank two distributions of Cartan type},
  Trans.\ Amer.\ Math.\ Soc. \textbf{371} (2019), 4911--4929. 


\bibitem[G1]{gromov73} M.~Gromov, 
  \textit{Convex integration of differential relations I}, 
  Izv.\ Akad.\ Nauk SSSR Ser.\ Mat.\ \textbf{37} (1973), 329--343. 

\bibitem[G2]{gromov_pdr} M.~Gromov, 
  \textit{Partial differential relations}, 
  Ergebnisse der Mathematik und ihrer Grenzgebiete (3), 9, 
  Springer-Verlag, Berlin, 1986. 

\bibitem[GE]{groeli71} M.~Gromov, Ya.~Eliashberg, 
  \textit{Elimination of singularities of smooth mappings}, 
  Izv.\ Akad.\ Nauk SSSR Ser.\ Mat.\ \textbf{35} (1971), 600--626. 

\bibitem[K]{kervaire} M.~Kervaire, 
  \textit{Courbure int\'{e}grale g\'{e}n\'{e}ralis\'{e}e et homotopie}, 
  Math.\ Ann.\ \textbf{131}(1956), 219--252. 

\bibitem[LMP]{lusmilpet} G.~Lusztig, J.~Milnor, and F.~P.~Peterson, 
  \textit{Semi-characteristics and cobordism}, 
  Topology \textbf{8} (1969), 357--359. 

\bibitem[Ma]{martinet71} J.~Martinet, 
  \textit{Formes de contact sur les vari\'et\'es de dimension $3$}, 
  Lecture Notes in Math.\, Vol.\ \textbf{209}, 1971, 142--163. 

\bibitem[MP]{mar-agu_delPino} J.~Mart\'{\i}nez-Aguinaga, \'A.~del Pino, 
  \textit{Convex integration with avoidance 
    and hyperbolic $(4,6)$ distributions}, arXiv:2112.14632. 

\bibitem[M-A]{mar-agu} J.~Mart\'{\i}nez-Aguinaga, 
  \textit{Existence and classification of maximal growth distributions}, 
  arXiv:2308.10762. 

\bibitem[Mc]{mcduff} D.~McDuff, 
  \textit{Applications of convex integration 
    to symplectic and contact geometry}, 
  Ann.\ Inst.\ Fourier (Grenoble) \textbf{37} (1987), 107--133. 

\bibitem[Mo]{montgomery} R.~Montgomery, 
  \textit{A tour of subriemannian geometries, their geodesics and applications},
  Mathematical Surveys and Monographs, \textbf{91}, 
  American Mathematical Society, Providence, RI, 2002. 

\bibitem[S]{spring} D.~Spring, 
  \textit{Convex integration theory. 
    Solutions to the h-principle in geometry and topology}, 
  Monographs in Mathematics, \textbf{92}. Birkh\"{a}user Verlag, Basel, 1998. 

\bibitem[T]{tanaka70} N.~Tanaka, 
  \textit{On differential systems, graded Lie algebras and pseudogroups}, 
  J.\ Math.\ Kyoto Univ.\ \textbf{10} (1970), 1--82. 
\end{thebibliography}
\end{document}